\newtheorem{theorem}{Theorem}[section]
\newtheorem{lemma}[theorem]{Lemma}
\newtheorem{prop}[theorem]{Proposition}
\newtheorem{cor}[theorem]{Corollary}
\theoremstyle{definition}
\newtheorem{definition}[theorem]{Definition}
\newtheorem{example}[theorem]{Example}
\newtheorem*{convention}{Convention}
\newtheorem{question}[theorem]{Question}
\theoremstyle{remark}
\newtheorem{remark}[theorem]{Remark}
\newcommand{\on}{\operatorname}
\newcommand{\C}{\mathbf C}
\newcommand{\Lw}{\Lambda_\mathbf w}
\newcommand{\Qv}{\alpha_{\mathbf v}}
\newcommand{\fa}{\mathfrak a}
\newcommand{\Gr}{\mathsf{Gr}}
\newcommand{\llsb}{[\![}
\newcommand{\rrsb}{]\!]}
\newcommand{\llb}{(\!(}
\newcommand{\rrb}{)\!)}
\newcommand{\Glm}{\Gr_{\mu}^{\bar{\lambda}}}
\newcommand{\Glmr}{\Gr^{\vec{\lambda}}_\mu}
\newcommand{\gf}{\mathfrak g}
\newcommand{\Gl}{\Gr^{\lambda}}
\newcommand{\Glr}{\Gr^{\vec{\lambda}}_0}
\newcommand{\Spr}{\mathsf{Spr}}
\numberwithin{equation}{section}
\begin{document}

\title{Springer theory for symplectic Galois groups}

\author{Kevin McGerty}
\address[McGerty]{Mathematical Institute\\University of Oxford\\Oxford OX1 3LB, UK}
\email[McGerty]{mcgerty@maths.ox.ac.uk}
\address[McGerty]{Department of Mathematics\\University of Illinois at Urbana-Champaign\\Urbana, IL 61801 USA}
\author{Thomas Nevins}
\address[Nevins]{Department of Mathematics\\University of Illinois at Urbana-Champaign\\Urbana, IL 61801 USA}
\email[Nevins]{nevins@illinois.edu}

\date{\today}

\begin{abstract}
A classical and beautiful story in geometric representation theory is the construction by Springer of an action of the Weyl group on the cohomology of the fibres of the Springer resolution of the nilpotent cone.   We establish a natural extension of Springer's theory to arbitrary symplectic resolutions of conical symplectic singularities.  We analyse features of the action in the case of affine quiver varieties, constructing Weyl group actions on the cohomology of $ADE$ quiver varieties, and also consider ``symplectically dual'' examples arising from slices in the affine Grassmannian. Along the way, we document some basic features of the symplectic geometry of quiver varieties.
\end{abstract}

\maketitle

\section{Introduction}

\vspace{1em}

A complex {\em symplectic variety} is a (quasiprojective) variety $X$ with at worst rational, Gorenstein singularities and with a symplectic form $\omega$, i.e. a closed, nondegenerate algebraic $2$-form, on its smooth locus.  A resolution of singularities $Y\rightarrow X$ is a {\em symplectic resolution} if $\omega$ extends to a symplectic form on all of $Y$.

A celebrated example is the cotangent bundle to a flag variety of a reductive group, which, as was first observed by a number of people including Springer and Steinberg, is a symplectic resolution of the nilpotent cone of the corresponding Lie algebra. 
In recent years,
much effort has been focused on discovering to what extent one can generalise known representation-theoretic phenomena from Springer resolutions to all symplectic resolutions. 

It is a beautiful observation of Markman \cite{M} in the projective case---extended to the conical affine case by Namikawa \cite{Na2}---that if $\pi\colon Y\to X$ is a symplectic resolution, then the Poisson deformation base of $Y$ is a Galois covering of the corresponding deformation base of $X$.
The corresponding ``symplectic Galois group,'' in the case of the cotangent bundle of the flag variety, is precisely the Weyl group, and in general it is the product of reflection groups attached to the geometry of slices to the resolution in codimension two. 

\vspace{.7em}

In the present paper, we show that the cohomologies of the fibres of a symplectic resolution carry representations of the symplectic Galois group, generalising the seminal work of Springer \cite{Sp}. We also show that these representations can be constructed in a number of ways, mirroring the various constructions in the classical case: in particular one can use both small resolutions and a version of the nearby cycles construction over a higher dimensional base.  Along the way, we establish some fundamental but not well-documented features of the symplectic geometry of Nakajima quiver varieties, and compute their symplectic Galois groups in the finite-type case.  

\vspace{.7em}

A more precise description of our results is the following.  Suppose that $X$ is a conical symplectic variety (Definition \ref{def:conical}) with symplectic resolution $Y\xrightarrow{\pi} X$.  Let $\mathcal{Y} \rightarrow B_Y$, $ \mathcal{X}\rightarrow B_X$ be the corresponding versal Poisson deformations, with morphism $\mathcal{Y}\xrightarrow{\tilde{\pi}} \mathcal{X}$, as constructed in \cite{M, Na2} (and reviewed in Section \ref{sec:defthy}), with symplectic Galois group $W$.  Fixing a coefficient field $k$,
we define constructible complexes
\begin{displaymath}
\mathsf{HC} := \widetilde{\pi}_!k_{\mathcal{Y}}[\operatorname{dim}(\mathcal{Y})] \hspace{2em} \text{and} \hspace{2em}
 \mathsf{Spr} := \pi_!k_{Y}[\operatorname{dim}(Y)],
 \end{displaymath}
  the {\em symplectic Harish-Chandra sheaf}
  and {\em symplectic Springer sheaf}.
  Let $\operatorname{Perv}_{\operatorname{sympl}}(X)$ denote the category of perverse sheaves on $X$ (with coefficients in $k$) which are smooth along the stratification by symplectic leaves.  We also define, in Section \ref{nearby cycles}, a nearby cycles sheaf $\mathcal{P}$ for the family $\mathcal X \to B_X$
   (again, notation as in Section \ref{sec:defthy}).

\begin{theorem}[Theorem \ref{Springer properties}, Corollary \ref{cor:action}, Theorem \ref{nearby cycles2}, Proposition \ref{monodromy}]
\mbox{}
\begin{enumerate}
\item The complexes $\mathsf{HC}$ and $\mathsf{Spr}$ are perverse sheaves, which are semisimple if $\on{char}(k)=0$. Moreover $\mathsf{HC}$ is the intersection cohomology extension of the local system on $\mathcal X^{\text{reg}}$ given by $k[W]$, the regular representation of the symplectic Galois group $W$.
\item We have $i_X^*\mathsf{HC} \cong \mathsf{Spr}$, where $i_X\colon X  \hookrightarrow \mathcal X$ is the inclusion. 
\item We have a natural algebra isomorphism
\begin{displaymath}
k[W]\xrightarrow{a} \operatorname{End}(\mathsf{HC}).
\end{displaymath}
\item We get an adjoint pair of functors:
\begin{displaymath}
(-\otimes_{k[W]} \mathsf{Spr}): k[W]-\operatorname{mod} \leftrightarrows \operatorname{Perv}_{\operatorname{sympl}}(X): \operatorname{Hom}(\mathsf{Spr}, -).
\end{displaymath}
\item The group $W$ acts faithfully on the sheaf $\mathsf{Spr}$, yielding, for each $x\in X$, an action of $W$ on $H^*\big(\pi^{-1}(x)\big)$.
\item  We have $\mathcal P \cong \pi_!(k_{Y})$ and the action of the braid group factors through the symplectic Galois group $W$.
\item The monodromy action of $W$ on $\pi_!(k_Y)$ induced by the isomorphism $\mathcal P \cong \pi_!(k_Y)$ coincides with the action of $W$ induced by the intermediate extension.
\end{enumerate}
\end{theorem}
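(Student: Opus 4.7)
The plan is to trace both $W$-actions on $\pi_! k_Y$ to the same geometric source: the deck transformation action of $W$ on the finite étale $W$-Galois cover $\tilde{\pi}\colon \mathcal{Y}|_{\mathcal{X}^{\text{reg}}} \to \mathcal{X}^{\text{reg}}$ provided by the Markman--Namikawa theorem. Over this regular locus, $\mathsf{HC}$ is (up to shift) the local system $L := \tilde{\pi}_* k_{\mathcal{Y}^{\text{reg}}} \cong k[W]$, and the algebra isomorphism $k[W] \xrightarrow{\sim} \operatorname{End}(\mathsf{HC})$ of item (3) is realised by the right-regular action of $W$ on $L$ given by deck transformations of the cover. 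Since $\mathsf{HC}$ is the intermediate extension of $L$, any endomorphism is uniquely determined by its restriction here, so applying $i_X^*$ produces the first $W$-action on $\mathsf{Spr}$.

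For the monodromy side, I would first verify the isomorphism $\mathcal{P} \cong \pi_! k_Y$ equivariantly. Since $\mathcal{Y} \to B_Y$ is smooth (deformations of a smooth symplectic variety are smooth), the nearby cycles functor $\psi$ for $\mathcal{Y} \to B_X$ at the central point satisfies $\psi(k_{\mathcal{Y}}) \cong k_Y$; commutation of $\psi$ with proper pushforward then yields $\mathcal{P} \cong \psi(\tilde{\pi}_! k_{\mathcal{Y}}) \cong \pi_! k_Y$, with the monodromy action on $\mathcal{P}$ matching the action functorially induced by the $W$-equivariance of $k_{\mathcal{Y}}$. Over $\mathcal{X}^{\text{reg}}$, where $\tilde{\pi}$ is an honest $W$-cover, this monodromy acts by deck transformations: loops in $B_X^{\text{reg}}$ lift along the $W$-Galois cover $B_Y^{\text{reg}} \to B_X^{\text{reg}}$, and the ambiguity in lifts is precisely the $W$-action on the fibres of $\tilde{\pi}$.

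Both $W$-actions on $\mathsf{Spr}$ thus restrict to the same deck transformation action on the regular locus, and the equality propagates globally by uniqueness of the two constructions---the IC property of $\mathsf{HC}$ on one side, proper base change together with the compatibility of nearby cycles with the $W$-equivariant structure on the other. The main technical obstacle is precisely this compatibility check: verifying that the intrinsic monodromy of $\psi$ applied to a $W$-equivariant sheaf agrees with the functorially induced $W$-action. I would handle this stalkwise, at each $x \in X$: the stalk $\mathsf{Spr}_x$ is computed from a Milnor fibre $M \subset \mathcal{X}^{\text{reg}}$ near $x$ whose preimage $\tilde{M} = \tilde{\pi}^{-1}(M)$ is a genuine topological $W$-cover, and both actions on $H^*(\tilde{M}) \cong H^*(M; L)$ arise from the same deck transformations of $\tilde{M}$---the monodromy one via lifts of loops in $B_X^{\text{reg}}$ to paths in $B_Y^{\text{reg}}$, the functorial one directly from $W$-equivariance.
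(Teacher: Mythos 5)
Your proposal targets parts (6) and (7) of the theorem (along with the role of the $W$-cover in (3)), and your basic intuition---trace both $W$-actions to deck transformations of the étale Galois cover $\widetilde{\pi}^{-1}(\mathcal{X}^{\text{reg}})\to\mathcal{X}^{\text{reg}}$---is correct. The proof of (6) via commutation of nearby cycles with proper pushforward is essentially what the paper does (the paper additionally restricts to a generic line $L\subset H^2(Y,\mathbb C)$ meeting the discriminant only at $0$, reducing to one-dimensional nearby cycles, which you should do too since the general ``nearby cycles over a higher-dimensional base'' machinery is exactly what's being justified here via the $A_f$-condition).

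The gap is in your ``propagates globally by uniqueness'' step. The IC rigidity you invoke applies to $\mathsf{HC}$ over $\mathcal{X}$: an endomorphism of $\mathsf{HC}$ is determined by its restriction to $\mathcal{X}^{\text{reg}}$. But the objects being compared live on $X$, which sits inside $\mathcal{X}$ as a closed fibre of the deformation entirely outside $\mathcal{X}^{\text{reg}}$ (since $X$ is singular, $0\in B_X$ is in the discriminant). After restricting along $i_X$, the sheaf $\mathsf{Spr}=i_X^*\mathsf{HC}$ is \emph{not} an intermediate extension of its restriction to any open locus of $X$, so no uniqueness-of-endomorphism argument is available there. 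You implicitly acknowledge this by falling back to a stalkwise comparison, which makes the propagation argument superfluous; but then all the weight falls on the stalkwise check, which you only sketch. The paper takes a cleaner route: it observes that the $W$-action on the stalk of $\mathsf{Spr}$ at the cone point $0\in X$ is \emph{faithful}, and that every simple IC summand of $\mathsf{Spr}$ has nonvanishing stalk at $0$ (using the conical structure), so that the map $\operatorname{End}(\mathsf{Spr})\to\operatorname{End}(\mathsf{Spr}_0)$ is injective and agreement at the single point $0$ suffices. The actual compatibility check is then a previously established lemma (the ``naive identification'' lemma, which compares the monodromy action on $\mathcal{P}_0\cong H^*(L)$ with the naive $W$-action on $H^*(Y)\cong H^*(L)$, using $\mathbb{G}_m$-equivariance of $\widetilde{\pi}_! k$). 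Your stalkwise-at-every-$x$ variant would, if fully carried out, avoid invoking semisimplicity of $\mathsf{Spr}$---a potential advantage in positive characteristic---but as written the hard compatibility step at general $x$ is asserted rather than verified, and the Milnor-fibre / loop-lifting argument needs care since the loops live in $B_X^{\text{reg}}$ while the deck action lives on a fibre of $\widetilde{\pi}$, and matching these requires exactly the $\mathbb{G}_m$-contraction argument the paper uses at $x=0$.
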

\noindent
The theorem establishes the basic viability of Springer theory in a symplectic setting.  In Section \ref{sec:ex-conj} we discuss some examples and a question about multiplicities of Springer sheaves; it would be very interesting to have more data concerning the question discussed there.

Turning to Nakajima's quiver varieties, we review several features of their geometry, most likely known to experts, but not all of which appear yet to be documented in the literature.  Fixing a quiver $Q$ as in Section \ref{sec:set-up}, let $Y(\mathbf v,\mathbf w)$, respectively $X(\mathbf v,\mathbf w)$, be the smooth symplectic quiver variety and affine quiver variety associated to the dimension vectors $\mathbf v$ and $\mathbf w$.  
\begin{theorem}[\cite{BS}, \ref{smooth is resolution}, and \ref{prop:res-exists}]
\mbox{}
\begin{enumerate}
\item $X(\mathbf v,\mathbf w)$ is a conical affine symplectic variety.
\item $Y(\mathbf v,\mathbf w)$ is a symplectic resolution of some conical affine symplectic variety.
\item If $Q$ is a Dynkin quiver of finite type, then for each $\mathbf v$ and $\mathbf w$ there exists $\mathbf{v}'$ so that
$X(\mathbf v',\mathbf w)\cong X(\mathbf v,\mathbf w)$ and  $\pi_{\mathbf v'}\colon Y(\mathbf v',\mathbf w)\to X(\mathbf v',\mathbf w)$ is a symplectic resolution.
\end{enumerate}
\end{theorem}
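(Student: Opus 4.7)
My plan is to handle the three parts in sequence. Part (1) combines the result of Bellamy--Schedler (cited as \cite{BS}), which gives the symplectic-singularity structure on $X(\mathbf v,\mathbf w)$, with the observation that the diagonal $\mathbb C^\times$-action scaling the cotangent directions on $T^*\on{Rep}(Q,\mathbf v,\mathbf w)$ commutes with the $G_{\mathbf v}$-action and with the moment map $\mu$.  Hence it descends to a contracting action on $X(\mathbf v,\mathbf w) = \mu^{-1}(0) /\!/ G_{\mathbf v}$ with the unique closed orbit as fixed point, making $X(\mathbf v,\mathbf w)$ a conical affine symplectic variety in the sense of the paper.

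For part (2), I would verify the features of a symplectic resolution in turn.  The smooth quiver variety $Y := Y(\mathbf v,\mathbf w) = \mu^{-1}(0) /\!/_\theta G_{\mathbf v}$ is smooth by Nakajima's original construction (for a generic stability $\theta$), and it inherits a symplectic form from the canonical one on the cotangent bundle.  The natural morphism $\pi$ to $X(\mathbf v,\mathbf w)$ is projective, and the restriction $\pi\colon Y \to X' := \pi(Y)$ onto its (closed, affine) image is generically an isomorphism because $\theta$-stable points have trivial stabilisers.  The affine image $X'$ inherits a contracting $\mathbb C^\times$-action from $X(\mathbf v,\mathbf w)$ and carries symplectic singularities, either via direct application of the Bellamy--Schedler result to a suitable sub-dimension vector, or because the affinisation of a smooth conical symplectic variety inherits symplectic singularities in the sense of Beauville.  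Together these give $\pi$ as a symplectic resolution of a conical affine symplectic variety.

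For part (3), the strategy is to exploit the finiteness of positive roots in Dynkin type.  Closed points of $X(\mathbf v,\mathbf w)$ are in bijection with semisimple modules of total dimension $\mathbf v$ over the preprojective algebra of the framed quiver, and by results of Crawley--Boevey the possible simple summands in finite type are parameterised by a finite set of positive roots of $Q$.  Given $(\mathbf v,\mathbf w)$, I would pick $\mathbf v'$ componentwise large enough that every semisimple orbit appearing in $X(\mathbf v,\mathbf w)$ admits a $\theta$-stable lift of dimension vector $\mathbf v'$ in $\mu^{-1}(0)$; concretely, $\mathbf v'$ can be taken as the dimension vector of a generic stable module whose semisimplification realises the open stratum of the image $\pi_{\mathbf v}(Y(\mathbf v,\mathbf w))$.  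With this choice, the natural identification of semisimple modules gives an isomorphism $X(\mathbf v,\mathbf w) \cong X(\mathbf v',\mathbf w)$, and $\pi_{\mathbf v'}$ is by construction surjective, projective, and birational---hence a symplectic resolution by part (2).

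The main obstacle is the construction of $\mathbf v'$ in part (3): one needs a uniform bound, independent of the semisimple orbit, on the dimension vector required to exhibit a stable lift.  In finite type this bound is furnished by the finiteness of positive roots and Crawley--Boevey's description of semisimple modules in terms of these roots; outside of finite type such a bound need not exist, so the argument does not extend.  Once $\mathbf v'$ is in hand, the verifications that it produces an isomorphism on affine quotients and a birational projective map from $Y(\mathbf v',\mathbf w)$ are essentially bookkeeping about the stratification by orbit type.
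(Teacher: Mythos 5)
Your treatment of part (1) matches the paper, and your overall roadmap for parts (2) and (3) is in the right spirit, but there are two genuine gaps in part (2) and a confusion in part (3).

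For part (2), the assertion that $\pi\colon Y \to \pi(Y)$ is generically an isomorphism ``because $\theta$-stable points have trivial stabilisers'' is a non sequitur. Trivial stabilisers on the stable locus say nothing about injectivity of the map to the affine quotient: two distinct stable orbits can have the same closed orbit in their closures and hence map to the same point of $X(\mathbf v,\mathbf w)$. (Indeed, any genuinely non-trivial symplectic resolution has positive-dimensional fibres over the singular locus, despite all stable points having trivial stabiliser; and Example \ref{quiver example} shows $\pi$ need not even be surjective.) The paper establishes birationality by a real argument: the slice theorem gives the dimension of $\pi^{-1}(x)$ over each orbit stratum $X_{(H)}$, finiteness of the strata forces the existence of a stratum $V$ with $\dim V = \dim Y$, and connectedness of the fibres (Crawley-Boevey) then upgrades finite fibres over $V$ to bijectivity over $V$. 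None of this is supplied by your proposal. Second, you take the target of the resolution to be $X' = \pi(Y)$, but a symplectic variety must be normal, and $\pi(Y)$---a union of orbit strata, possibly proper---need not be. The paper passes to the normalisation $N$ of $\pi(Y)$, and crucially must then show that $N$ (not $\pi(Y)$) is the affinisation of $Y$; this uses that the non-isomorphism locus has codimension $\geq 2$ so functions on $V$ extend to $N$. Your alternative --- that ``the affinisation of a smooth conical symplectic variety inherits symplectic singularities'' --- is not a free fact either; the paper proves $N$ is symplectic via the Poisson-bivector / top-exterior-power argument, which you should include.

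For part (3), the paper's $\mathbf v'$ satisfies $\alpha_{\mathbf v'}\leq \alpha_{\mathbf v}$, i.e.\ it is the (componentwise \emph{smaller}) dimension vector for which $X(\mathbf v',\mathbf w)^{\mathrm{reg}}$ is the dense open stratum of $X(\mathbf v,\mathbf w)$; one then uses Crawley-Boevey's normality of $X(\mathbf v,\mathbf w)$ to upgrade the bijection $X(\mathbf v',\mathbf w)\to X(\mathbf v,\mathbf w)$ to an isomorphism, and Nakajima's semismallness (Corollary 10.11 of \cite{N98}) plus a dimension count to see $\pi_{\mathbf v'}$ is surjective. Your phrase ``pick $\mathbf v'$ componentwise large enough'' has the direction backwards and is inconsistent with your own, correct, second characterisation (the dimension vector of a generic stable module whose semisimplification lies in the open stratum). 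Also note that the stratum to target is the dense open stratum of all of $X(\mathbf v,\mathbf w)$, not merely of $\pi_{\mathbf v}\bigl(Y(\mathbf v,\mathbf w)\bigr)$.
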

\noindent
We caution the reader that in general  $Y(\mathbf v,\mathbf w)$ need {\em not} be a resolution of $X(\mathbf v,\mathbf w)$ (cf. Example \ref{quiver example}). In \cite{BS} Bellamy and Schedler, as well as establishing part $(1)$ of the above theorem, also determine precisely when the varieties $X(\mathbf v,\mathbf w)$ have symplectic resolutions. 

Furthermore, we calculate the symplectic Galois group of quiver varieties for finite-type Dynkin quivers.  More precisely, associated to the finite-type Dynkin quiver $Q$ and dimension vectors $\mathbf v$ and $\mathbf w$ is a parabolic subroot system (denoted $\Phi_{\mu}$ in Section \ref{sec:quiver-Weyl}) with associated Weyl group $W_{\mu}$.  We show:
\begin{theorem}[Theorem \ref{QuiverWeylgroup}]\label{quiverweylgroup}
If $Q$ is a Dynkin quiver of finite type, the symplectic Galois group of the quiver variety $X(\mathbf{v},\mathbf{w})$ is $W_\mu$.
\end{theorem}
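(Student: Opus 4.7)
The plan is to compute $W$ directly from its Markman--Namikawa description as the group generated by the Weyl groups of the Kleinian transverse slices at codimension-two symplectic leaves of $X(\mathbf v, \mathbf w)$, after folding by the monodromy of $\pi_1$ of each leaf on the corresponding ADE Dynkin diagram. The strategy thus splits into (i) enumerating the codimension-two symplectic leaves of $X(\mathbf v, \mathbf w)$, (ii) computing each transverse Kleinian singularity, and (iii) determining the relevant monodromy; (iii) should turn out to be trivial in finite type, placing all content in (i) and (ii).

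For finite-type $Q$, the symplectic leaves of $X(\mathbf v, \mathbf w)$ are indexed by representation types for the framed preprojective algebra: a closed point of $X(\mathbf v, \mathbf w)$ is a polystable semisimple representation, which decomposes into indecomposables indexed by positive roots of $Q$ (together with the framing piece). By the Crawley-Boevey--Nakajima transverse slice theorem, the formal transverse slice at such a point is a product of smaller quiver varieties determined by the multiplicities of the summands, and a leaf has codimension two exactly when a single factor in this product is two-dimensional and the remaining factors are points. Such a two-dimensional factor is itself a Nakajima quiver variety of dimension two, so it is a Kleinian singularity of some ADE type whose Dynkin diagram is determined by the local dimension vectors via Nakajima's dimension formula. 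Triviality of the monodromy in (iii) will follow in ADE type from the fact that the stabilizer groups along the leaf are connected, leaving no room for nontrivial diagram automorphisms.

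The heart of the proof is then to match the resulting set of simple reflections against the parabolic subroot system $\Phi_\mu$ from Section \ref{sec:quiver-Weyl}. Using the canonical decomposition of $\mathbf v$ relative to $\mathbf w$, both $\Phi_\mu$ and the codimension-two slice data are governed by the same combinatorial input: the simple roots $\alpha_i$ along which a minimal ``defect-one'' degeneration of a generic representation can occur. The main obstacle is making this matching precise, i.e., showing that every codimension-two leaf produces a reflection lying in $W_\mu$ and conversely that every simple reflection of $W_\mu$ arises from some codimension-two leaf. I expect this to be a careful bookkeeping exercise using reflection functors and Nakajima's dimension formula rather than a conceptual difficulty, and once it is complete the Markman--Namikawa theorem immediately identifies $W$ with $W_\mu$.
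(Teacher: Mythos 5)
Your overall strategy—enumerate codimension-two leaves via Nakajima's slice theorem, identify the transverse Kleinian singularities, then apply Namikawa's description of the symplectic Galois group—is precisely the route the paper takes in Lemma \ref{codim 2 strata}, Lemma \ref{codim 2 Weyl groups}, and Theorem \ref{QuiverWeylgroup}, so steps (i) and (ii) are essentially correct in outline. The combinatorics matching your ``bookkeeping exercise'' is the content of Lemma \ref{codim 2 strata}: codimension-two leaves $X(\mathbf v',\mathbf w)^{\reg}$ are in bijection with $\Phi_\mu^{\text{max}}$, and the Kleinian slice through the leaf labelled $\beta$ is of type $\Phi(\beta) = \{\alpha\in\Phi : \alpha\leq\beta\}$; these $\Phi(\beta)$ are exactly the indecomposable components of $\Phi_\mu$, so the product of their Weyl groups is $W_\mu$.

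The genuine gap is step (iii). You assert that the monodromy diagram automorphism $\sigma_\beta$ must be trivial ``from the fact that the stabilizer groups along the leaf are connected, leaving no room for nontrivial diagram automorphisms.'' This is not a valid argument and is not how the paper handles it. The monodromy in question is the action of $\pi_1$ of the leaf on the set of components of the exceptional fibre of the minimal resolution of the transverse Kleinian slice; it is a statement about how the family of slices twists as you traverse a loop in the leaf, and the connectedness of stabilizer subgroups of $G_{\mathbf v}$ at points of the leaf does not constrain it. (Compare with the classical Springer situation: all stabilizers in the flag variety picture are connected, yet nontrivial monodromy on exceptional fibres certainly occurs there, e.g.\ for the subregular nilpotent orbit in non-simply-laced types.) The paper instead proves triviality of $\sigma_\beta$ in Proposition \ref{nomonodromy} by a genuinely representation-theoretic argument: it invokes Nakajima's theorem that the span of top-dimensional components of the Steinberg variety $Z(\mathbf v,\mathbf w)$ over the closure of a given stratum is canonically isomorphic to $L(\eta+\alpha)_\eta^* \otimes L(\eta+\alpha)_\eta$. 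Comparing the resulting count $r^2$ of components against the number of monodromy orbits on pairs of components of the exceptional fibre forces the monodromy action to be trivial. Without some replacement for this step your proof does not go through; the connectedness of stabilizers is a red herring.

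One smaller imprecision: each codimension-two leaf does not produce a single simple reflection of $W_\mu$, but rather the entire Weyl group $W(\Phi(\beta))$ of an indecomposable component of $\Phi_\mu$, since the transverse Kleinian singularity has type $\Phi(\beta)$ (which can have rank greater than one). Your phrasing ``every codimension-two leaf produces a reflection lying in $W_\mu$'' suggests a bijection between leaves and simple reflections, which is not what happens.
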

\noindent
Combining Theorem \ref{quiverweylgroup} with our general theory, we obtain Weyl group actions on the cohomology of ADE quiver varieties. Weyl group actions have previously been obtained by Lusztig \cite{Lu}, Nakajima \cite{N03} and Maffei \cite{Maffei}.

Outside of finite type, as we show in Example \ref{affine example}, the naive generalization of Theorem \ref{quiverweylgroup} fails. It is not clear to the authors what the correct general statement should be. It is plausible that one needs to take into account the existence of actions by larger algebras than the Kac-Moody algebras on topological invariants of quiver varieties.

We should note that much has already been written about categorical actions of braid groups on (derived categories of) quiver varieties and their quantizations.  It is fairly clear that, when comparable, our actions decategorify those (cf. Lemma \ref{Springer-via-BM} and the discussion around it).  Even if one views our actions, at a lower categorical level, as a poor relation of categorical actions, our approach does work uniformly, independent of coefficient field.  It is also quite concrete and topological; one important feature of Springer theory is the interplay amongst various approaches.

\begin{convention}
Throughout the paper, all varieties and schemes are over the field $\mathbb{C}$ of complex numbers, unless otherwise stated.  We write $k$ to denote a coefficient field for sheaves, cohomology, etc.
\end{convention}

We are grateful to many people for helpful conversations and comments, in particular Gwyn Bellamy, Tom Braden, Mark Goresky, Dmitry Kaledin, and Joel Kamnitzer.  Both authors are grateful to MSRI  for excellent working conditions during the initial work on this project.
The first author was supported by a  Royal Society University Research Fellowship and the EPSRC Programme Grant on motivic invariants and categorification, and by a Fisher Visiting Professorship at the University of Illinois at Urbana-Champaign.  The second author was supported by NSF grants DMS-1159468, DMS-1502125, and DMS-1802094 and NSA grant H98230-12-1-0216, and by an All Souls Visiting Fellowship.  Both authors were supported by MSRI.

\section{Symplectic resolutions}
\label{Symplectic resolutions}
\subsection{Symplectic Varieties and Poisson Structures}

Symplectic varieties were introduced by Beauville \cite{Be}.

A {\em symplectic variety} $X$ is a normal variety which admits an (algebraic) symplectic form $\omega$ on its smooth locus $X^\text{reg}$ such that, for some (or equivalently, every) resolution of singularities $\pi \colon Y \to X$, the pull-back of $\omega$ extends to a regular (though not necessarily non-degenerate) $2$-form on all of $Y$.  In the case where the pull-back of $\omega$ does extend to a symplectic form on all of $Y$, we say that $\pi$ is a {\em symplectic resolution}. It is not necessarily the case that a symplectic variety admits a symplectic resolution (see the work of Fu \cite{Fu} on nilpotent orbit closures for examples). 

We restrict our attention to affine symplectic varieties and their symplectic resolutions. These have been studied by a number of people.  We will rely heavily on Kaledin's lovely paper \cite{Ka},  in which he provides a basic structure theory for symplectic varieties which we now review. We begin by noting that, since $X$ is normal, the Poisson bracket on functions on $X^{\text{reg}}$ given by $\omega$ extends to a Poisson bracket on all of $X$. Thus, an irreducible affine symplectic variety is an integral Poisson scheme.

\begin{definition}
We say that a Poisson variety $Z$ is \textit{generically nondegenerate} if the bivector giving the Poisson bracket yields a generically nondegenerate alternating two-form. A Poisson variety is said to be  \textit{holonomic} if the restriction of the Poisson bracket to any integral Poisson subscheme is generically nondegenerate.
\end{definition}
\noindent

Recall that the {\em singularity stratification} of a reduced scheme $X$ of finite type is the stratification whose closed subsets $X = X_0 \supset X_1\supset X_2\supset \dots \supset X_k$ have the property that $X_k^\circ := X_k\smallsetminus X_{k+1}$ equals $X_k^{\operatorname{reg}}$, the smooth locus of $X_k$.  This stratification is canonical and thus is preserved by all automorphisms of $X$ and vector fields on $X$.

\begin{theorem}
\cite{Ka} Let $X$ be a complex symplectic variety.  The locally closed strata of the singularity stratification of $X$ are locally closed smooth symplectic subvarieties. The closures $X_i$ of the $X_i^0$ are irreducible Poisson subschemes and every integral Poisson subscheme of $X$ is the closure of a stratum. Moreover the normalization of any Poisson subscheme $Y \subset X$ is also a symplectic variety. 
\label{Kaledin strata}
\end{theorem}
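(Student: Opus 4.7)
The plan is to exploit the fact that the singularity stratification $X = X_0 \supset X_1 \supset \dots$ is intrinsically defined, hence preserved by every automorphism and, more importantly, by every derivation of $\mathcal{O}_X$. Since $X$ is normal, the symplectic form on $X^{\text{reg}}$ extends to a Poisson bracket on all of $X$, so each $f\in \mathcal{O}_X$ defines a Hamiltonian vector field $H_f$, which must preserve every $X_i$ (and hence every $X_i^\circ$). This provides a rigid coupling between the Poisson structure and the stratification.

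With that tool in hand, I would proceed in four steps. (a) Since $H_f$ is tangent to each $X_i^\circ$, the bracket descends to $X_i^\circ$, which is smooth by definition of the stratification; so each $X_i^\circ$ is canonically a smooth Poisson subvariety. (b) To upgrade ``Poisson'' to ``symplectic'', I would compare with a resolution $\pi\colon Y\to X$: symplectic leaves of $X$ are tangent to $H_f$, hence are contained in individual strata, and one computes that the rank of the Poisson tensor restricted to $X_i^\circ$ is generically $\dim X_i^\circ$. Concretely, this can be verified by restricting the regular 2-form on $Y$ to the preimage of a stratum and using the assumed non-degeneracy on $X^{\text{reg}}$ together with a dimension count. (c) The bijection with integral Poisson subschemes is then near-formal: closures $X_i$ are Poisson by (a), and conversely any integral Poisson subscheme $Z$ contains a dense open symplectic leaf, which lies in a unique $X_j^\circ$, forcing $Z = \overline{X_j^\circ}$. (d) For the normalization statement: by (c), an integral Poisson $Z\subset X$ is some $X_i$, so $X_i^\circ$ carries the symplectic form produced in (b); pulling back to the smooth locus of the normalization $\widetilde{X_i}$ gives a symplectic form there. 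To check that it extends to a regular 2-form on any resolution of $\widetilde{X_i}$, I would use a simultaneous resolution of $X$ and $X_i$ (guaranteed by standard resolution-of-singularities packaged with Kaledin's extension criterion on $X$) and deduce the extension on $\widetilde{X_i}$ by restriction.

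The most delicate step is step (b): upgrading the Poisson structure on each stratum to a symplectic one is precisely what distinguishes Kaledin's symplectic varieties from arbitrary generically-symplectic Poisson varieties, and the control on the rank of the Poisson bivector on deep strata has to come, one way or another, from the extension property of the symplectic form to a resolution. The normalization step (d) is similarly non-trivial, as it requires a resolution of $X$ whose restriction resolves $X_i$; both difficulties are the heart of Kaledin's analysis and ultimately rely on standard (but non-trivial) resolution-of-singularities inputs together with the defining extension property of symplectic varieties.
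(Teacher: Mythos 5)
The paper does not prove this theorem---it is cited directly from Kaledin's article \cite{Ka}---so there is no in-paper proof to compare against. (The paper does reuse the first move of Kaledin's argument in its later proof that the stratification of a symplectic variety is Whitney: Hamiltonian flows preserve the intrinsic singularity stratification, and a smooth holonomic Poisson variety is symplectic, the latter being Kaledin's Lemma 1.4.) Your step (a) and the near-formal bijection in (c) match this opening, and you are right to flag (b) and (d) as the heart of the matter.

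There are, however, genuine gaps in exactly those steps. For (b), restricting the regular $2$-form $\tilde{\omega}$ on a resolution $Y$ to $\pi^{-1}(X_i^0)$ cannot directly produce a symplectic form on $X_i^0$: over a deep stratum the fibres of $\pi$ are positive-dimensional and isotropic, so the restricted form has a large kernel and is not the pullback of any form from $X_i^0$; a dimension count does not repair this. For (d), a ``simultaneous resolution of $X$ and $X_i$'' does not exist in general---the preimage (or strict transform) of $X_i$ in a resolution of $X$ is typically singular, non-normal, and possibly reducible---so there is no resolution of $\widetilde{X_i}$ on which to check extension of the $2$-form by restriction. Kaledin circumvents both problems at once via a formal Darboux--Weinstein decomposition: formally near a point of $X_i^0$, $X$ factors as the product of a symplectic disc tangent to $X_i^0$ with a transverse slice that is itself a conic symplectic variety of strictly smaller dimension. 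That factorization is what simultaneously yields the symplectic structure on the stratum and the inductive handle on normalizations. Without it, or some substitute, the argument does not close.
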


Note that, in particular, a symplectic variety is a holonomic Poisson variety. The converse, however, is false (already in dimension 2, for example for simple elliptic singularities).

\subsection{Deformation Theory and the Symplectic Galois Group}\label{sec:defthy}
\begin{definition}\label{def:conical}
A {\em conical symplectic variety} is an affine symplectic variety $X$ equipped with a $\mathbb{G}_m$-action with nonnegative weights on $\mathbb C[X]$ and $\mathbb{C}[X]^{\mathbb{G}_m}  =\mathbb{C}$, and for which the symplectic form transforms with weight $d >0$.
\end{definition}

While standard deformation theory for non-projective or singular varieties is generally rather intractable, it turns out there is a sensible theory of \textit{Poisson deformations} for pairs $(X,\{-,-\})$ consisting of a conical affine symplectic variety $X$ with its Poisson bracket.  This deformation theory  has been studied by Namikawa \cite{Na1} and Ginzburg-Kaledin \cite{GK}; here we will mostly follow \cite{Na1}. In that paper, Namikawa defines a Poisson deformation functor $PD_X$ from the category of local Artinian rings with residue field $\mathbb C$ to sets and analyses it using the Poisson cohomology $HP^*(X)$. 
In the case where $X$ has only terminal singularities, or is affine with $H^1(X^{\text{an}}, \mathbb C) = 0$, it is known \cite[Corollary 2.5]{Na1} that the Poisson deformation functor is pro-representable. 

In the case where $X$ is a symplectic variety and $\pi\colon Y\to X$ is a symplectic resolution, we thus have Poisson deformation functors for both $X$ and $Y$.
It can be shown that if $X$ is a conical symplectic variety, then the $\mathbb G_m$-action lifts to the resolution $Y$ and to the pro-representing hulls $R_X$ and $R_Y$. For such a conical symplectic variety $X$, the hull $R_Y$ associated to the resolution $Y$ is a formal power series ring 
\[
\mathbb C[[y_1,\ldots,y_d]]\cong\text{Sym}(H^2(Y,\mathbb C)^*)^{\wedge}
\]
where $\mathbb G_m$ acts with weight $d$ on each $y_i$. Now $R_X$ is a subring of $R_Y$, hence the $\mathbb G_m$-action again has positive weights on $R_X$. These actions allow one to algebraicize the universal formal Poisson deformations of $X$ and $Y$.  Indeed, let $R^{\mathbb{G}_m-\operatorname{fin}}$ denote the subspace of $\mathbb{G}_m$-finite vectors in a $\mathbb{G}_m$-representation $R$.  Writing $B_X = \operatorname{Spec}(R_X^{\mathbb{G}_m-\operatorname{fin}})$ and similarly for $Y$, one obtains compatible $\mathbb G_m$-equivariant Poisson deformations of $X$ and $Y$:

 \begin{equation}\label{main diagram}
\xymatrix{
Y \ar@{^{ (}->}[r]^{i_Y} \ar[d]^\pi &\mathcal Y \ar[r]^{\theta_Y} \ar[d]^{\tilde{\pi}} & B_Y \ar[d]^{q}  \\
X  \ar@{^{ (}->}[r]^{i_X}  & \mathcal X \ar[r]^{\theta_X} & B_X.
}
\end{equation}
 \noindent
 \begin{displaymath} 
\text{Here} \hspace{1em} B_Y \cong  H^2(Y,\mathbb C)\cong \mathbb{A}^d \cong B_X, \hspace{1em} \text{where } d = \dim(H^2(Y,\mathbb C)).
 \end{displaymath} 
  The identification of the base of the deformation of $H^2(Y,\mathbb C)$ arises via a symplectic version of a period map: in general this identifies the base of the formal deformation with the formal neighbourhood of the class of the symplectic form $\omega_Y$ in $H^2(Y,\mathbb C)$. In the conic case of course, $[\omega_Y]=0$. Note also that the affinization of the upper row is the pull-back of $Y$ along the covering of $B_Y$ by $H^2$.

\subsection{The Universal Conic Deformation}
\label{sec:Conic universality}
We wish to understand to what extent the families $\mathcal Y$ and $\mathcal X$ are universal\footnote{Everything here is presumably well known to the experts, but we failed to find a suitable reference, so for the convenience of the reader we outline the details here.}: to do this, we consider families $f\colon \mathcal X_\fa \to \fa$ of symplectic resolutions over a vector space $\mathfrak a$, such that the $d$-th power of the scaling action on the vector space $\fa$ lifts to an action on the symplectic family $\mathfrak X_\fa$ inducing the action $\rho$ on the central fibre $X$, with moreover $t.\tilde{\omega} = t^d\tilde{\omega}$, where $\tilde{\omega}$ is the symplectic form of $\mathfrak X_\fa$ over $\fa$.  We shall call such a family a weight $d$ \textit{conic deformation}.

\begin{prop}
\label{conic universality}
The family $\mathcal Y\to H^2(Y,\mathbb C)$ is universal among weight $d$ conic deformations, that is, any such family $\mathcal Y_\fa \to \fa$ is obtained by pull-back from a linear map $\alpha\colon \fa \to H^2(Y,\mathbb C)$. 
\end{prop}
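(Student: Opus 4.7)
The plan is to use Namikawa's pro-representability of the Poisson deformation functor of $Y$ to obtain the classifying map formally, then to use weight considerations to force it to be linear, and finally to use positivity of the $\mathbb{G}_m$-weights on $\fa$ to pass from formal to global.

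First I would restrict the family $\mathcal{Y}_\fa$ along the Artinian thickenings of the origin $0 \in \fa$.  Since $Y$ is smooth, the Poisson deformation functor $PD_Y$ is pro-representable with hull $R_Y \cong \mathbb{C}[[y_1,\ldots,y_d]]$, so this compatible system of Artinian Poisson deformations of $Y$ is classified by a unique morphism of formal schemes $\widehat{\alpha}\colon \widehat{\fa} \to \operatorname{Spf}(R_Y)$, together with a canonical identification $\mathcal{Y}_\fa|_{\widehat{\fa}} \cong \widehat{\alpha}^* \mathcal{Y}$ of formal families.

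Next I would argue that $\widehat{\alpha}$ is $\mathbb{G}_m$-equivariant.  The weight $d$ action on $\mathcal{Y}_\fa$ is compatible (over $\fa$) with the weight $d$ action on $\mathcal{Y}$, so for each $t \in \mathbb{G}_m$ both $\widehat{\alpha}$ and its $t$-conjugate classify the same formal deformation; pro-representability forces them to agree.  Now $R_Y$ is generated by elements $y_i$ of $\mathbb{G}_m$-weight $d$, and the weight-$d$ elements of $\widehat{\mathcal{O}}_{\fa,0}$ are precisely the linear functions on $\fa$.  An equivariant continuous algebra map $R_Y \to \widehat{\mathcal{O}}_{\fa,0}$ therefore sends each generator to a linear function, so $\widehat{\alpha}$ is the formal germ of a unique linear map $\alpha\colon \fa \to H^2(Y,\mathbb{C})$.

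The main obstacle is to upgrade the resulting formal isomorphism $\mathcal{Y}_\fa|_{\widehat{\fa}} \cong \widehat{\alpha}^*\mathcal{Y}$ to a genuine isomorphism over all of $\fa$.  Here I would exploit the fact that both sides are weight-$d$ conic deformations: the $\mathbb{G}_m$-action with strictly positive weights contracts $\fa$ to $0$, so any $\mathbb{G}_m$-equivariant quasi-coherent sheaf on $\fa$ with nonnegative weights is recovered from its completion at the origin by extracting $\mathbb{G}_m$-finite sections.  Applied to the $\mathbb{G}_m$-equivariant affinizations of $\mathcal{Y}_\fa$ and of $\alpha^*\mathcal{Y}$, this algebraicizes the formal isomorphism on the base of each resolution; the isomorphism of the two smooth symplectic resolutions over $\fa$ then follows from the fact that both resolve the same conical affine family and agree formally along the central fibre $Y$, using again the contracting $\mathbb{G}_m$-flow to spread the match from a formal neighbourhood of $Y$ to all of $\mathcal{Y}_\fa$.
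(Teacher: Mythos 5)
Your proposal takes essentially the same route as the paper's proof: formally classify the Artinian restrictions via a universal formal deformation, use the $\mathbb{G}_m$-action and weight homogeneity to force linearity of the classifying map, and then algebraicize via $\mathbb{G}_m$-finite sections. The only cosmetic difference is that you invoke Namikawa's pro-representability of the Poisson deformation functor $PD_Y$ where the paper cites the Kaledin--Verbitsky universal formal deformation of $(Y,\omega)$; since $Y$ is a smooth symplectic variety these two deformation theories coincide, and your filling-in of the final algebraicization step (which the paper leaves as ``spread this out'') is a reasonable elaboration of the same idea.
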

\begin{proof}
We sketch a proof.  First we note that by Kaledin and Verbitsky \cite{KV}, there is an universal \textit{formal} deformation of $(Y,\omega)$, which we shall denote by $(\hat{\mathcal Y},\hat{\omega})$ over a formal base $S$. (Note that any symplectic resolution is admissible in the terminology of $\S 2$ of \cite{KV}.) This formal base is the completion at $0=[\omega_Y]$ of $H^2(Y,\mathbb C)$. Now the universality of this deformation ensures that the $\mathbb G_m$ action extends to an action of the formal group $\hat{\mathbb G}_m$ on $\hat{\mathcal Y}$. 

Now as discussed above, the formal action allows us to spread out $S$ to $H^2(Y,\mathbb C)$ in such a way that the formal $\mathbb G_m$-action extends uniquely to become a weight $d$ action. Given this, if we have any weight $d$ conic deformation $\mathcal Y_\fa\to \fa$, then by universality its formal completion at $0$ is pulled back from the universal deformation $(\hat{\mathcal Y},\hat{\omega})$ by a map $\tilde{f}$ say. The $\mathbb G_m$-actions then allow us to spread this out to a map $f\colon \fa \to H^2(Y,\mathbb C)$, which by homogeneity (everything in sight has weight $d$ for the $\mathbb G_m$-action) must be linear.
\end{proof}

\begin{remark}
The Poisson deformation family $\mathcal X \to B_X$ is also shown by Namikawa \cite{Na2} to be formally universal. The $\mathbb G_m$-action then allows this to be algebraicised to be universally conic.
\end{remark}

 \subsection{The Symplectic Galois Group}

It is shown in \cite{Na2} that $q$ is a Galois covering.
\begin{definition}
We call the covering group $W$ of $q$ the {\em Galois group} of our symplectic resolution, or simply the {\em symplectic Galois group}. 
\end{definition}

Namikawa's work shows that $W$ can be described quite explicitly in terms of the exceptional divisors of the resolution $\pi$: Let $\Sigma$ denote the singular locus of $X$ and $\Sigma_0$ denote the union of the strata of $X$ of codimension $4$ or higher, and set $U = X \backslash \Sigma_0$. Let $\mathcal B$ denote the set of connected components of $\Sigma\backslash \Sigma_0$, the singular locus of $U$. For each $B$ in $\mathcal B$, a transverse slice $S_B$ through a point $b \in B$ yields a two-dimensional symplectic singularity---that is, a rational surface singularity. Let $W'_B$ denote the Weyl group of type $ADE$ attached to this singularity. The monodromy, as $b \in B$ varies, induces a diagram automorphism of $W'_B$ and we set $W_B\subset W'_B$ to be its fixed-point subgroup. The main theorem of \cite{Na2} then shows that the symplectic Galois group is 
\[
W = \prod_{B \in \mathcal B} W_B,
\]
and so $B_X = H^2(Y,\mathbb C)/W$.

Note that in Diagram \eqref{main diagram} each square is commutative, but the right-hand square is \textit{not} Cartesian, since for example the fibre product $\mathcal X' := \mathcal X\times_{B_X}B_Y$ is affine. Following Namikawa \cite{N3}, we let $\Pi \colon \mathcal Y \to \mathcal X'$ be the natural map and, for $t \in H^2(Y,\mathbb C)$, let $\Pi_t\colon \mathcal Y_t \to \mathcal X'_t$ be the restriction of $\Pi$ to the preimage $\mathcal Y_t = \psi_Y^{-1}(t)$.

We now recall a result of Namikawa. Let $\mathcal D\subset H^2(Y,\mathbb C)$ be the discriminant locus: the closure of the locus where $\Pi_t$ is not an isomorphism. For a $\mathbb{Q}$-vector space $H$ write $H_{\mathbb C}= \mathbb C\otimes_{\mathbb Q} H$.
\begin{theorem}[\cite{N3}]
\label{Namikawa discriminant}
There are finitely many linear subspaces $\{H_i\}_{i \in I}$ of $H^2(Y,\mathbb Q)$ of codimension $1$ such that $\mathcal D = \bigcup_{i \in I} (H_i)_{\mathbb C}$. Morever the open chambers in $H^2(Y, \mathbb{R})$ complementary to the arrangement of real hyperplanes $\{(H_i)_{\mathbb R}\}_{i \in I}$ are given by $\{w(\text{Amp}(\pi_k))\}$ where $w \in W$ and $\pi_k$ are the projective  crepant  resolutions of $X$, and $\text{Amp}(\pi_k)$ denotes the ample cone of $\pi_k$.
\end{theorem}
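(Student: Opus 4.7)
The plan is to localize the discriminant analysis to codimension-two singular strata, where the universal Poisson deformation reduces to the classical Brieskorn--Slodowy picture for du Val singularities, and then to globalize by combining the resulting local hyperplane arrangements with the birational geometry of symplectic resolutions of $X$.

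First I would address the linearity and codimension of the components of $\mathcal{D}$. For generic $t \in B_Y$, the fiber $\mathcal{Y}_t$ is a smooth affine symplectic variety that coincides with $\mathcal{X}'_t$ via $\Pi_t$, so $\mathcal{D}$ is a proper closed subset of $B_Y$. To analyze it near a generic point of a component, one uses the local structure of symplectic resolutions near codimension-two singularities: for each $B \in \mathcal{B}$, pulling the universal deformation $\mathcal{Y} \to B_Y$ back along a transverse slice $S_B$ produces the universal Poisson deformation of a du Val singularity of some type $R_B$. By Brieskorn--Slodowy, that local deformation is parametrized by $\mathfrak{h}_{R_B}/W'_B$, simultaneously resolved after base change to $\mathfrak{h}_{R_B}$, with discriminant the union of root hyperplanes; the monodromy in $B$ cuts $W'_B$ down to $W_B$. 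Since these local deformations embed compatibly into the universal $\mathcal{Y} \to B_Y$ (by Proposition~\ref{conic universality}, via the identifications $H^2(Y,\mathbb{Q}) \supset \operatorname{Pic}(Y)_{\mathbb{Q}}$ and the natural classes of exceptional divisors over $B$), the local root hyperplanes determine finitely many codimension-one subspaces $H_i \subset H^2(Y,\mathbb{Q})$ whose complexifications exhaust $\mathcal{D}$.

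For the chamber description, I would show that the complement in $H^2(Y,\mathbb{R})$ of $\bigcup_i (H_i)_{\mathbb{R}}$ coincides with the union of $W$-translates of ample cones of projective crepant resolutions of $X$. On the one hand, if $\pi_k\colon Y_k \to X$ is a projective crepant resolution, the universal property identifies $H^2(Y_k,\mathbb{R})$ with $H^2(Y,\mathbb{R})$, and the family $\mathcal{Y}_k \to B_{Y_k}$ is a smooth family of symplectic resolutions over the interior of $\operatorname{Amp}(\pi_k)$ (the relative ample line bundles extend in the family), so this ample cone is contained in a single $\mathcal{D}$-chamber. Walls of $\operatorname{Amp}(\pi_k)$ correspond to contractions of extremal rays along codimension-two symplectic strata, i.e.\ precisely to wall-crossings realized as Mukai-style flops to another crepant resolution or as contractions into $\mathcal{D}$, so these walls lie in some $H_i$. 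Conversely, for any point $t$ in a chamber, a polarization of the corresponding smooth fiber specializes back to an ample class on some $\pi_k$, showing that the chamber must equal $\operatorname{Amp}(\pi_k)$. Finally, the $W$-action on $B_Y$ over $B_X$ permutes crepant resolutions (translating ample cones), and since $W$ acts simply transitively on $\mathcal{D}$-chambers lying over a fixed $B_X$-chamber, the $W$-orbit of a complete set of $\operatorname{Amp}(\pi_k)$'s exhausts the chamber complement.

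The main obstacle will be the second part: showing that the $H_i$ do not subdivide any $\operatorname{Amp}(\pi_k)$ further, and reciprocally that the walls of all $\operatorname{Amp}(\pi_k)$ lie in $\bigcup H_i$. The outward direction requires a precise identification of wall-crossing loci in the deformation family with the discriminant, using that the flops occurring along codimension-one walls in $\operatorname{Amp}(\pi_k)$ correspond to $2$-dimensional du Val degenerations and hence to root hyperplanes of the local $W'_B$. The reverse requires finiteness and connectedness of the set of projective crepant resolutions, controlled via Kawamata's termination results for $K$-trivial flops in the symplectic setting, together with the fact (from Kaledin's theory) that any two projective crepant resolutions of $X$ are connected by a finite sequence of Mukai flops.
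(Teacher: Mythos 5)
The paper does not give a proof of this theorem: it is stated as a citation to Namikawa's paper \cite{N3}, and the proof is carried out there, not here. So there is no internal argument against which to compare your proposal; what follows is a review of your sketch on its own merits.

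Your sketch captures the broad strategy that Namikawa actually follows — localize at codimension-two strata where the Brieskorn--Slodowy picture applies, and globalize via the birational geometry of crepant resolutions — but two of the key steps are asserted rather than justified, and they are exactly the places where the real work happens. First, you implicitly assume that $\mathcal D$ is entirely detected by the codimension-two degenerations, i.e.\ that if a fibre $\mathcal X'_t$ is singular along a deep stratum it must already be singular along some codimension-two stratum of the deformed family; this is a nontrivial structural fact about Poisson deformations of conical symplectic varieties (Namikawa proves that the Poisson deformation functor is essentially determined by the slices to the codimension-two strata, which is what makes the local-to-global step work and in particular forces linearity of the components of $\mathcal D$ over $\mathbb Q$). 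Without that, the compatibility of the local du Val period maps with the global period map $B_Y \cong H^2(Y,\mathbb C)$ does not by itself show $\mathcal D$ is a finite union of rational hyperplanes. Second, the chamber-tiling argument via ``Mukai-style flops'' elides the general classification of extremal contractions of symplectic resolutions: not every wall of an ample cone corresponds to a flop over a two-dimensional du Val slice, and the identification of the walls of $\operatorname{Amp}(\pi_k)$ with the $(H_i)_{\mathbb R}$, the simple transitivity of $W$ on chambers over a fixed $B_X$-chamber, and the finiteness/connectedness of the set of projective crepant resolutions are precisely the theorems of \cite{N3} and its predecessors that one would need to cite — so as written, the ``proof'' quietly reuses the result it is trying to establish. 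Your outline is morally right but does not constitute an independent proof; it is a roadmap through Namikawa's argument with its two hardest waypoints left unexamined.
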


 Thus, in particular, the discriminant locus of the map $\Pi$ is an algebraic hypersurface: more precisely, a finite union of hyperplanes.

\subsection{(Semi)Smallness and Versal Deformations}
\begin{definition}
A morphism of varieties $f\colon Y \to X$ is said to be \textit{semismall} if the fibre product $Z =Y\times_X Y$ has dimension at most $d=\dim(Y)$. If moreover any irreducible component of $Z$ which has dimension $d$ maps to a dense subset of $X$ then $f$ is said to be \textit{small}.  
\end{definition}
Kaledin shows that any symplectic resolution $Y\rightarrow X$ is semismall.  This fact can be refined as follows.
\begin{prop}
\label{smallness}
Let $\pi \colon Y\to X$ be a conical symplectic resolution, and let $\widetilde{\pi}: \mathcal{Y}\rightarrow \mathcal{X}$ be the morphism from the total space of the universal Poisson deformation of $Y$ to the corresponding deformation of $X$ itself.  Then  $\widetilde{\pi}$ is small.
\end{prop}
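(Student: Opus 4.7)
The plan is to combine Kaledin's semismallness of $\pi$ with the conic $\mathbb{G}_m$-action and Namikawa's hypersurface description of the discriminant (Theorem \ref{Namikawa discriminant}). For each $d \geq 1$ set $\mathcal{S}_d = \{x \in \mathcal{X} : \dim \widetilde{\pi}^{-1}(x) \geq d\}$; smallness of $\widetilde{\pi}$ is equivalent to the bound $\operatorname{codim}_{\mathcal{X}}(\mathcal{S}_d) > 2d$ for every such $d$. My strategy is to bound $\dim \mathcal{S}_d$ by the dimension of its intersection with the central fibre $X$, where the semismallness of $\pi$ supplies the needed input. Since $q^{-1}(0) = \{0\}$, we have $\widetilde{\pi}^{-1}(X) = Y$, hence $\mathcal{S}_d \cap X = \{x \in X : \dim \pi^{-1}(x) \geq d\}$, and Kaledin's semismallness gives $\operatorname{codim}_X(\mathcal{S}_d \cap X) \geq 2d$.

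Next, by Theorem \ref{Namikawa discriminant}, the map $\Pi \colon \mathcal{Y} \to \mathcal{X}'$ is an isomorphism off the hyperplane arrangement $\mathcal{D} \subset B_Y$, so the composition $\widetilde{\pi} = (\mathcal{X}' \to \mathcal{X}) \circ \Pi$ has finite fibres over the complement of $\theta_X^{-1}(q(\mathcal{D}))$. As $q$ is finite and $\mathcal{D}$ is a $W$-invariant hypersurface, $q(\mathcal{D}) \subset B_X$ is itself a hypersurface; hence $\mathcal{S}_d \subseteq \theta_X^{-1}(q(\mathcal{D}))$ and $\dim \theta_X(\mathcal{S}_d) \leq \dim B_X - 1$. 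To leverage this bound I then invoke the conic $\mathbb{G}_m$-action: each irreducible component $Z$ of $\mathcal{S}_d$ is $\mathbb{G}_m$-invariant, and the positivity of weights on $B_X$ together with the $\mathbb{G}_m$-equivariance of $\theta_X$ forces $\lim_{t \to 0}(t \cdot z) \in Z \cap X$ for every $z \in Z$; in particular $0 \in \theta_X(Z)$. Applying Chevalley's lower bound on fibre dimension to the dominant morphism $\theta_X|_Z \colon Z \to \overline{\theta_X(Z)}$ at the point $0$ then yields
\[
\dim(Z \cap X) \;\geq\; \dim Z - \dim \theta_X(Z) \;\geq\; \dim Z - (\dim B_X - 1),
\]
so $\dim \mathcal{S}_d \leq \dim(\mathcal{S}_d \cap X) + \dim B_X - 1$. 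Combined with $\dim \mathcal{X} = \dim X + \dim B_X$, this rearranges to $\operatorname{codim}_{\mathcal{X}}(\mathcal{S}_d) \geq \operatorname{codim}_X(\mathcal{S}_d \cap X) + 1 \geq 2d + 1 > 2d$, which is the desired smallness inequality.

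I expect the main subtlety to lie in the combined $\mathbb{G}_m$-contraction and Chevalley step: one must ensure that $0$ is genuinely an image point of $\theta_X|_Z$, that $Z$ really is irreducible so the Chevalley estimate applies, and that everything respects the hyperplane-discriminant structure of Theorem \ref{Namikawa discriminant}. Each of these is individually straightforward, but the final strict inequality is won by a margin of one, so the conic hypothesis is used essentially and the dimension bookkeeping must be tight.
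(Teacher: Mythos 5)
Your proof is correct, and it takes a genuinely different route from the paper's. The paper works with the fibre product $\mathcal{Y}\times_{\mathcal{X}}\mathcal{Y}$ directly and decomposes $\mathcal{X}$ over $B_X$: the key ingredient is that \emph{every} restriction $\widetilde{\pi}_p\colon\mathcal{Y}_p\to\mathcal{X}_{q(p)}$, $p\in B_Y$, is a projective birational morphism from a smooth variety to a symplectic variety, hence semismall by Kaledin's Lemma 2.11; adding $\dim Y$ to the dimension of the (proper, closed) image of the singular locus in $B_X$ gives the strict inequality $\dim(\mathcal{Y}\times_{\mathcal{X}_{\on{sing}}}\mathcal{Y})<\dim\mathcal{Y}$. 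You instead invoke semismallness only for the central fibre $\pi\colon Y\to X$, and use the conical $\mathbb{G}_m$-action together with Chevalley's fibre-dimension bound to propagate that information outward: the contraction forces $0\in\theta_X(Z)$ for every component $Z$ of $\mathcal{S}_d$, so $\dim Z$ is controlled by $\dim(Z\cap X)$ plus $\dim\theta_X(Z)\leq\dim B_X-1$. What your approach buys is economy of input (only the central fibre's semismallness, not the whole family's); what the paper's approach buys is that it applies verbatim to non-central slices as well, which matters in Remark \ref{smallness of conic deformations} where the classifying map $\alpha$ need not hit $0$. One point you take for granted but should flag: the $\mathbb{G}_m$-action on $\mathcal{X}$ must be genuinely contracting (all weights on $\mathbb{C}[\mathcal{X}]$ nonnegative) for $\lim_{t\to 0}(t\cdot z)$ to exist in $\mathcal{X}$ and hence in the closed invariant set $Z$; this holds for the universal conic deformation and is used elsewhere in the paper (e.g.\ in the proof of Lemma \ref{naive identification}), but it is the load-bearing hypothesis in your Chevalley step and deserves an explicit sentence.
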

\begin{proof}
Let $\mathcal X_{\operatorname{sing}}$ denote the locus in $\mathcal X$ which is the union of the deformations $\mathcal X_t$ ($t \in B_X$) such that $\mathcal X_t$ is singular. By the main result of \cite[Main Theorem]{N3} this is the preimage $\psi_X^{-1}(\bar{\mathcal D})$ of a proper closed subset $\bar{\mathcal D}$ of $B_X$ the base of the universal Poisson deformation, and moreover, the map $\tilde{\pi}$ is finite over $\mathcal X\smallsetminus \mathcal X_{\operatorname{sing}}$. To prove the Proposition it thus suffices to check that $\dim(\mathcal Y\times_{\mathcal X_{\operatorname{sing}}} \mathcal Y)< \dim(\mathcal Y)$.

To see this, for each $p\in H^2(Y)$, let $\tilde{\pi}_p\colon \mathcal Y_p \to \mathcal X_{q(p)}$ denote the restriction of $\tilde{\pi}$ to $\mathcal Y_p =\psi_Y^{-1}(p)$. Since $\tilde{\pi}_p$ is then a projective birational morphism from the smooth variety $\mathcal Y_p$ to the symplectic variety $\mathcal X_p$ it follows immediately from Lemma $2.11$ of \cite{Ka} that $\tilde{\pi}_p$ is semismall so that $\dim(\mathcal Y_p\times_{\mathcal X_{q(p)}}\times \mathcal Y_p) = \dim(\mathcal Y_p)=\dim(Y)$.  Hence by considering the natural map $\mathcal Y \times_\mathcal X \mathcal Y \to B_X$ we see that
\begin{displaymath}
\operatorname{dim}(\mathcal{Y}\times_{\mathcal{X}_{\operatorname{sing}}}\mathcal{Y}) \leq \on{dim}(Y) + \on{dim}\big(\psi_X(\mathcal{X}_{\on{sing}})\big) < \on{dim}(\mathcal{Y}),
\end{displaymath}
as required.
\end{proof}

\begin{remark}
\label{smallness of conic deformations}
Note that the above argument works for any conic deformation $\mathcal Y_\fa\to \fa$ of a conic symplectic resolution $\pi\colon Y \to X$ whose generic deformation is affine: If $\mathcal X_\fa$ is the affinization of $\mathcal Y_\fa$ then the morphism $\tilde{\pi}_\fa \colon \mathcal Y_\fa \to \mathcal X_\fa$ is small. This follows readily from the above result and Proposition \ref{conic universality}: if the generic deformation of the family $\mathcal Y_\fa$ is affine, then the linear map $\alpha\colon \fa \to H^2(Y,\mathbb C)$ which classifies $\mathcal Y_\fa$ has its image $\text{im}(\alpha)$ not contained in the discriminant locus $\mathcal D\subseteq H^2(Y,\mathbb C)$. The argument of the previous Proposition then applies unchanged to $\tilde{\pi}_\fa$.
\end{remark}

\section{Symplectic Geometry of Quiver Varieties}
We investigate  symplectic features of Nakajima's quiver varieties.  The content of this section is largely well known to experts, but we document the details where we could not find suitable references. 

\subsection{Basic Set-Up}\label{sec:set-up}
We briefly recall the setting, using \cite{N98} as our principal reference.
Let $Q= (I,\Omega)$ be a quiver with vertex set $I$ and edge set $\Omega$, such that the edges $h \in \Omega$ are directed with starting vertex $s(h)$ and terminating vertex $t(h)$. Let $\bar{Q}$ denote the doubled quiver, \textit{i.e.} the quiver with vertex set $I$ but edge set $H = \Omega\sqcup \bar{\Omega}$ where $\bar{\Omega}$ is in bijection with $\Omega$ via a map $h \mapsto \bar{h}$ so that $s(\bar{h}) = t(h)$, and $t(\bar{h}) = s(h)$. For convenience we treat $\overline{(\cdot)}$ as an involution on $H$ by setting $\overline{(\overline{h})} = h$.

A quiver variety is defined by a pair $\mathbf v,\mathbf w \in \mathbb Z_{\geq 0}^I$. Given $\mathbf v$, $\mathbf w$, choose $I$-graded vector spaces $V$ and $W$ with $\dim(V_i)= \mathbf v_i$, $\dim(W_i) = \mathbf w_i$ for each $i \in I$, and let 
\[
\mathbf M(\mathbf v,\mathbf w) = \bigoplus_{h \in H}\text{Hom}(V_{s(h)},V_{t(h)}) \oplus \bigoplus_{i \in I}\text{Hom}(W_i,V_i) \oplus \bigoplus_{i \in I}\text{Hom}(V_i,,W_i).
\]
We will write an element of this space as a triple $(x,p,q)$. Picking a function $\varepsilon\colon H \to \{\pm 1\}$ such that $\varepsilon(h) + \varepsilon(\bar{h})=0$, the formula
\[
\omega\big((x,p,q),(x',p',q')\big) = \sum_{h\in H}\text{tr}\big(\varepsilon(h)x_hx_{\overline{h}}'\big) + \sum_{i\in I}\text{tr}\big(p_i'q_i+p_iq_i'\big)
\]
defines a symplectic form on $M(\mathbf v,\mathbf w)$ for which the action of the group $G_\mathbf v = \prod_{i \in I} \text{GL}(V_i)$ is Hamiltonian, with moment map:
\[
\mu(x,p,q) = \left(\sum_{h:s(h)=i} \varepsilon(h)x_{\bar{h}}x_h + pq\right)_{i \in I} \in \mathfrak{g}_\mathbf v^* = \bigoplus_{i \in I}\mathfrak{gl}_{V_i}
\]
(where we identify $\mathfrak{gl}_V$ with its dual using the trace pairings). We may then consider two kinds of quotients of $M(\mathbf v,\mathbf w)$ by the action of $G_\mathbf v$. The first is the affine quotient, which we will denote by $X(\mathbf v,\mathbf w)$. That is,
\[
X(\mathbf v,\mathbf w) = \text{Spec}(\mathbb C[\mu^{-1}(0)]^{G_\mathbf v}).
\]
The points of $X(\mathbf v,\mathbf w)$ thus correspond to the closed $G_\mathbf v$-orbits in $\mu^{-1}(0)$. 
Notice that the Poisson bracket associated to the symplectic form $\omega$ is clearly preserved by the action of $G_\mathbf v$, so that it induces a Poisson structure on $\mathbb C[\mu^{-1}(0)]^{G_\mathbf v}$, and hence $X(\mathbf v,\mathbf w)$ is a Poisson variety.

The second quotient is the GIT quotient determined by the linearisation of the trivial line bundle given by the determinant character $\chi$ (more generally, one could take any sufficiently general character of $G_{\mathbf{v}}$ that is trivial on the diagonal $\mathbb{G}_m$): 
\[
Y(\mathbf v,\mathbf w) = \text{Proj}(\bigoplus_{n \geq 0} \mathbb C[\mu^{-1}(0)]^{\chi^n}).
\]
In \cite{N98} it is shown that $Y(\mathbf v,\mathbf w)$ coincides with the quotient of $\mu^{-1}(0)^s$ by the action of $G_\mathbf v$, where $\mu^{-1}(0)^s$ is the set of stable points of $\mu^{-1}(0)$, and $G_\mathbf v$ acts freely there, so the quotient is again an algebraic symplectic manifold. The description of $Y(\mathbf v,\mathbf w)$ immediately shows that there is a projective morphism $\pi \colon Y(\mathbf v,\mathbf w) \to X(\mathbf v,\mathbf w)$. It is easy to check also that the variety $Y(\mathbf v,\mathbf w)$, if non-empty, has dimension $2\mathbf v^t\mathbf w - \mathbf v^t C\mathbf v$, where as usual $C$ denotes the Cartan matrix associated to the underlying graph of our quiver.

\subsection{Affine Quiver Varieties are Symplectic}

Let $(Q,I,H)$ be a quiver and $\mathbf v,\mathbf w$ be dimension vectors. Let $Y=Y(\mathbf v,\mathbf w)$, respectively $X= X(\mathbf v,\mathbf w)$, be the smooth symplectic quiver variety and affine quiver variety associated to the dimension vectors $\mathbf v$ and $\mathbf w$ and let $\pi\colon Y \to X$ denote the natural projective morphism.

We first investigate the affine symplectic variety $X$. Note that since the morphism $\pi\colon Y \to X$ is not in general birational, the following  is not obvious.

\begin{theorem}[Theorem~1.2 of \cite{BS}]
\label{prop:affine-is-symplectic}
An affine quiver variety $X$ is a symplectic variety.
\end{theorem}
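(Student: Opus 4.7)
The plan is to verify Beauville's three conditions for $X$ to be a symplectic variety: (i) $X$ is normal; (ii) the smooth locus $X^{\on{reg}}$ carries an algebraic symplectic form; and (iii) this form extends to a regular $2$-form on some resolution of singularities. The main obstacle will be (iii), because the natural projective morphism $\pi\colon Y(\mathbf v,\mathbf w)\to X(\mathbf v,\mathbf w)$ is not in general birational, as the paper itself warns, so we cannot simply use $Y$ as a resolution and pull back the form.

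For (i), I would invoke Crawley-Boevey's analysis of the moment-map fibre $\mu^{-1}(0)\subset \mathbf M(\mathbf v,\mathbf w)$: absorbing the framing datum $W$ into an extra vertex converts the framed situation to an unframed quiver, for which Crawley-Boevey shows that $\mu^{-1}(0)$ is a reduced, irreducible, normal complete intersection under appropriate non-degeneracy hypotheses on the dimension vector. Since $G_{\mathbf v}$ is reductive, normality passes to the invariant ring $\mathbb C[\mu^{-1}(0)]^{G_{\mathbf v}}$, so $X$ is normal. For (ii), on the open subset of $\mu^{-1}(0)$ where $G_{\mathbf v}/Z(G_{\mathbf v})$ acts freely, classical Marsden--Weinstein symplectic reduction produces a symplectic form on the quotient, and this form is defined on a dense open subset of $X^{\on{reg}}$; by normality, together with the Kaledin stratification theorem recalled earlier, this form accounts for the symplectic form on all of $X^{\on{reg}}$.

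For (iii), the strategy is to exploit an étale-local structure theorem for affine quiver varieties, essentially due to Crawley-Boevey and Nakajima: at any point $x\in X(\mathbf v,\mathbf w)$, an étale neighbourhood of $x$ in $X$ is isomorphic to a product of a symplectic vector space with a smaller affine quiver variety $X(\mathbf v',\mathbf w')$, which models the transverse slice to the symplectic leaf through $x$. Since the property of being a symplectic variety is étale-local, this reduces (iii) to the analogous statement for the transverse slice, which we attack by induction on $\dim(X)$. The base case, at codimension-$2$ singularities, has as transverse slice a two-dimensional affine quiver variety; by the McKay correspondence these are Kleinian (du Val) singularities of type $ADE$, which admit symplectic resolutions and are classical examples of symplectic varieties. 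The main technical input needed to carry this plan through is a slice theorem compatible with the Poisson structure, so that the symplectic form on the slice obtained by restriction from $X$ agrees with the intrinsic symplectic-reduction form on the smaller quiver variety $X(\mathbf v',\mathbf w')$; once this Poisson-compatible slice theorem is in place, the induction propagates the symplectic-variety property cleanly from the ADE base case to $X$.
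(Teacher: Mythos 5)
Note first that the paper does not prove this statement: it is recorded as a citation to Bellamy--Schedler \cite{BS}, so there is no in-paper proof to compare against; I can only assess your attempt on its own terms and against what is known about the cited proof.

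Your ingredients for (i) and (ii) are the right ones and do appear in \cite{BS}: Crawley-Boevey's normality of $\mu^{-1}(0)$, which descends to the GIT quotient because $G_{\mathbf v}$ is reductive, and Marsden--Weinstein reduction on the locus of free orbits, extended across all of $X^{\text{reg}}$. The serious problem is with the induction in (iii). You propose to reduce to lower dimension via the Luna/Nakajima slice theorem. But $X(\mathbf v,\mathbf w)$ is conical, and at the cone point $0$ the slice theorem is vacuous: the closed orbit is the zero representation, whose stabiliser is all of $G_{\mathbf v}$, the symplectic leaf through $0$ is $\{0\}$, and the ``transverse slice'' is $X(\mathbf v,\mathbf w)$ itself. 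So the inductive step never touches the cone point, and ``rational Gorenstein at $0$'' is exactly what is left unproved.

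This is not a gap one can close by soft arguments. Being a symplectic variety cannot be deduced from its validity on a punctured neighbourhood of $0$ together with a conical $\mathbb{G}_m$-action and a generically nondegenerate Poisson bracket: the simple elliptic surface singularities --- which the present paper itself flags as holonomic but not symplectic --- are normal, conical, generically symplectic, and smooth away from the cone point, yet fail to be symplectic varieties precisely because of what happens at $0$. The same issue undermines your base case: asserting that ``two-dimensional affine quiver varieties are Kleinian by the McKay correspondence'' is not an application of the McKay correspondence (which passes from finite subgroups of $SL_2$ to affine $ADE$ quivers, not from arbitrary two-dimensional quiver varieties to du~Val singularities); ruling out the simple elliptic alternatives is again exactly the cone-point problem. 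You also correctly flag that the Poisson/symplectic compatibility of the Nakajima slice isomorphism needs to be established, but that is a comparatively benign issue. What Bellamy--Schedler actually supply is an argument that attacks the cone point directly, leaning on further Crawley-Boevey results about $\mu^{-1}(0)$ (flatness, complete intersection, irreducibility) and on the canonical decomposition of the dimension vector, rather than a slice-theorem induction that by construction never reaches $0$.
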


It follows that every affine quiver variety is holonomic. In fact, one can give an explicit description of the symplectic leaves.  Recall from \cite{N98} that $X$ is stratified by stabilizer-type: a point $z\in X$ corresponds to a closed orbit in $\mu^{-1}(0)$. Let $(x,p,q) \in \mu^{-1}(0)$ be a point lying in this orbit and let $H$ denote its stabilizer in $G_\mathbf v$. Since the orbit of $(x,p,q)$ is closed the subgroup $H$ is a reductive subgroup of $G_\mathbf v$. If $(H)$ denotes the conjugacy class of the subgroup $H$ in $G_\mathbf v$ we say that $(H)$ is the stabilizer type of $z \in X$. Conversely, given a conjugacy class $(H)$ of reductive subgroups of $G_\mathbf v$ we let $X_{(H)}$ be the subset of all points of $X$ of stabilizer type $(H)$. This yields a finite\footnote{There are finitely many conjugacy classes of such reductive subgroups as can be seen by root combinatorics.} stratification $X = \bigsqcup_{(H)} X_{(H)}$. 

As mentioned in the introduction, \cite{BS} also give explicit criteria for when an affine quiver variety admits a symplectic resolution.

\begin{cor}
\label{Quiver holonomic}
The variety $X$ is a holonomic variety in the sense of Kaledin.  Its symplectic leaves are precisely the strata of the orbit stratification in the sense of \cite{N98}.
\end{cor}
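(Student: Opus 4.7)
The plan has two parts, corresponding to the two claims. Holonomy is essentially immediate: by Theorem \ref{prop:affine-is-symplectic}, $X$ is a symplectic variety, and the paragraph following Theorem \ref{Kaledin strata} records that every symplectic variety is holonomic. Moreover, Theorem \ref{Kaledin strata} asserts that the normalization of any integral Poisson subscheme $Y\subset X$ is again a symplectic variety, so the restriction of the Poisson bracket is generically nondegenerate on every integral Poisson subscheme, which is exactly holonomy.

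For the identification of symplectic leaves with orbit strata, the key geometric input I would use is the \'etale-local Luna-slice description of a quiver variety near a point $z \in X_{(H)}$. Combining the ordinary GIT Luna slice theorem with Nakajima's transverse-slice construction (a Hamiltonian/symplectic version of the Marle--Guillemin--Sternberg slice theorem adapted to the moment-map set-up of Section \ref{sec:set-up}), a neighborhood of $z$ in $X$ is \'etale-locally isomorphic to a product $U\times S_z$, where $U$ is open in $X_{(H)}$ and $S_z$ is the symplectic transverse slice at $z$, itself an affine quiver variety (for a smaller pair of dimension vectors) whose unique point of stabilizer type equal to the full group is $z$ itself. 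From this product decomposition I would extract two facts: (i) the stratum $X_{(H)}$ is smooth and inherits a symplectic form from the Poisson bracket on $X$, and (ii) $X_{(H)}$ is precisely the smooth locus of its closure $\overline{X_{(H)}}$, since the singular part of $\overline{X_{(H)}}$ comes from the strata of strictly more degenerate stabilizer type.

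Once (i) and (ii) are established, the matching of stratifications is forced. Each $\overline{X_{(H)}}$ is a $G_\mathbf v$-invariant closed Poisson subscheme of $X$; decomposing into irreducible components (each the closure of a connected component of an orbit stratum), Theorem \ref{Kaledin strata} identifies each such closure as the closure of some symplectic leaf, which by the theorem is the smooth locus of the closure. By (ii) that smooth locus is the corresponding component of $X_{(H)}$. Since every point of $X$ lies in a unique $X_{(H)}$, this shows the orbit stratification coincides with the singularity stratification, and the symplectic leaves are the orbit strata.

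The main obstacle is the \'etale-local product decomposition in step (i)--(ii). This is a symplectic Luna slice statement in the Hamiltonian reduction framework, and care is required to identify the transverse factor $S_z$ with an affine quiver variety compatibly with Poisson structures. Everything else---the \emph{a priori} $G_\mathbf v$-invariance of leaves, the identification of each leaf with the smooth locus of its closure, and the partitioning argument---then follows formally from Theorems \ref{prop:affine-is-symplectic} and \ref{Kaledin strata}.
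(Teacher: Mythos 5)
Your argument is correct in substance but takes a genuinely different route from the paper's. For the identification of symplectic leaves with orbit strata, the paper simply cites \cite{Martino}, where exactly this statement is proved; your slice-theoretic plan --- using the Luna/Nakajima transverse slice to show each $X_{(H)}$ is smooth symplectic and equal to the smooth locus of its closure, then invoking Theorem \ref{Kaledin strata} --- is essentially a sketch of how Martino's proof goes, so you are reproving the cited result rather than leaning on it. This is not wrong, but if you carry it out in detail you should be aware the heavy lifting (the symplectic version of Luna's slice theorem in the Marsden--Weinstein setting, and the identification of the slice as a smaller quiver variety compatibly with the Poisson structure) is precisely what \cite{Martino} supplies. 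For holonomy, the two approaches also diverge: you invoke Theorem \ref{prop:affine-is-symplectic} (that $X$ is a symplectic variety) together with the remark that symplectic varieties are holonomic, whereas the paper derives holonomy \emph{from} the leaf identification, without appealing to Theorem \ref{prop:affine-is-symplectic} at all: once the leaves are the orbit strata and there are finitely many of them, any integral Poisson subvariety contains a dense leaf, which is exactly generic nondegeneracy. The paper's route is thus lighter-weight (it doesn't need the full force of \cite{BS}); yours front-loads the symplectic-variety theorem but then gets holonomy for free. Both are sound; just note that the order of implications is reversed, and that the leaf-identification step in your plan, while workable, should realistically be replaced by the citation to Martino that the paper uses.
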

\begin{proof}
Since $X$ is a quotient of the zero locus of the moment map, it has a natural Poisson structure. Now it is shown in \cite{Martino} that the strata of the orbit stratification of $X$ are precisely the symplectic leaves of $X$. Since an integral Poisson subvariety $Y$ of $X$ is the union of the symplectic leaves it contains, it follows that there is a leaf which is open and dense in $Y$, and so the Poisson bracket is generically nondegenerate on $Y$, that is, $X$ is holonomic as claimed.
\end{proof}

\subsection{Smooth Quiver Varieties are Symplectic Resolutions}
While it is not, in general, true that an affine quiver variety has a symplectic resolution, it is nonetheless the case that a smooth symplectic quiver variety $Y(\mathbf v,\mathbf w)$ {\em is} always a symplectic resolution---although not necessarily of the associated affine quiver variety $X(\mathbf v,\mathbf w)$.   Although this seems to be known to experts, it does not seem to be documented anywhere. 

To prove it, we use the slice theorem of Nakajima, which shows that if $x \in X$ then there is an open neighbourhood $U$ of $x$ and a slice $U_x$ so that $U = U_x\times T$ where $U_x$ is (analytically) isomorphic to an open neighbourhood of $0 \in X_x$, where $\pi_x \colon Y_x \to X_x$ is a smaller quiver variety associated to a possibly different quiver (which is explicitly described by Nakajima).  We obtain a diagram:

\xymatrix{
& & & & \pi^{-1}(U) \ar[r]\ar[d]_{\pi}  & T\times (\pi_x)^{-1}(U_x) \ar[d] \\
& & & & U \ar[r]^{=}&   T\times U_x.
}

In particular, the fiber $\pi^{-1}(x)$ is biholomorphic to $L_x$, the fibre over $0 \in X_x$. This is known, by \cite[Theorem 3.21]{N98}, to be homotopic to $Y_x$, the smooth quiver variety associated to the slice at $x$, and hence has the topology of a smooth connected affine variety of dimension $\dim_{\mathbb C}(X_x) = \dim(X)-\dim(X_{(H)})$ where $X_{(H)}$ is the stratum of $X$ containing $x$ (note that the connectedness of smooth symplectic quiver varieties is proved in general by Crawley-Boevey \cite{C01}).

\begin{prop}\label{smooth is resolution}
The smooth quiver variety $Y(\mathbf v,\mathbf w)$ is a symplectic resolution.
\end{prop}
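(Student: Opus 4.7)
The plan is to take the Stein factorization of the projective morphism $\pi\colon Y\to X$. Since $X$ is affine, this yields a canonical factorization $Y \xrightarrow{\pi'} X' \xrightarrow{\psi} X$ in which $X' = \on{Spec}(\mathbb C[Y])$ is the affinization of $Y$, $\pi'$ is proper with geometrically connected fibres, and $\psi$ is finite.  Since $Y$ is quasi-projective and a relatively ample line bundle for $\pi$ remains relatively ample for $\pi'$, the map $\pi'$ is projective.  I claim that $\pi'$ realizes $Y$ as a symplectic resolution of the conical affine symplectic variety $X'$; this requires verifying both birationality of $\pi'$ and the symplectic variety properties of $X'$.

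For birationality, I would argue via the slice theorem displayed above combined with an induction on a suitable complexity invariant of the quiver data (for instance $\dim Y$, or the dimension of the deepest stabilizer stratum that is in play).  At any $x \in X$ in a non-generic stratum $X_{(H)}$, the slice theorem presents $\pi$ \'etale-locally as the product $\pi_x \times \on{id}_T$ of a strictly smaller quiver variety map $\pi_x\colon Y_x \to X_x$ with an identity on an open $T \subset X_{(H)}$.  By the inductive hypothesis the Stein factorization of $\pi_x$ is a symplectic resolution, so $\pi_x$ is generically finite onto its image; hence so is $\pi$ \'etale-locally near $x$.  Combining this with the connectedness of the fibres of $\pi'$ (from Stein factorization), the generic fibre of $\pi'$ is forced to be a single reduced point, so $\pi'$ is birational.

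With birationality in hand, $X'$ is a symplectic variety almost for free.  It is normal because $Y$ is smooth and $\pi'_*\mathcal O_Y = \mathcal O_{X'}$.  The Poisson bracket on $\mathbb C[Y]$ inherited from $\omega_Y$ gives $X'$ a Poisson structure, and since $\pi'$ is projective birational with $Y$ smooth symplectic, $\omega_Y$ descends to a symplectic form on $(X')^{\on{reg}}$.  The Beauville extension condition is then witnessed by $\pi'$ itself: $Y$ is a smooth resolution of $X'$ on which the pullback of the symplectic form is $\omega_Y$, which is regular by hypothesis.  Conicality of $X'$ is inherited from the natural $\mathbb G_m$-scaling action on $M(\mathbf v,\mathbf w)$.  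The principal obstacle in the argument is setting up the inductive step for birationality coherently: one must identify a well-founded complexity measure for which the slice quiver data is strictly smaller than $(\mathbf v, \mathbf w)$, and handle the base case (trivial quiver variety, or one where $\pi$ is already a resolution).  Everything else then follows from Stein factorization and standard facts about symplectic singularities.
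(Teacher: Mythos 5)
Your overall strategy closely mirrors the paper's: show $\pi$ is birational onto a normal variety and then verify that variety is conical symplectic. Packaging this via Stein factorization is a sensible alternative to the paper's explicit passage through the normalization of $\pi(Y)$, and it conveniently gives connectedness of the fibres of $\pi'$ for free (the paper instead invokes Crawley-Boevey's connectedness of the slice variety $X_x$ at the corresponding step).

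The principal gap, which you flag yourself, is that the inductive argument for birationality is not actually carried out, and it is not as routine as you suggest. The paper avoids induction entirely: by Nakajima's slice theorem the fibre $\pi^{-1}(x)$ over $x$ in a stratum $X_{(H)}\subset \pi(Y)$ is homotopy equivalent to a smooth affine variety of complex dimension $\dim Y - \dim X_{(H)}$; since $\pi^{-1}(x)$ is compact this forces the existence of a stratum of dimension exactly $\dim Y$ inside $\pi(Y)$, over which the fibres are zero-dimensional; connectedness (Crawley-Boevey again) then makes them singletons. If you want the inductive route, $\dim Y$ does work as the well-founded invariant provided you slice at a non-cone-point $x\in\pi(Y)$, since then $\dim Y_x = \dim Y - \dim X_{(H)} < \dim Y$. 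But you must also produce such an $x$: if $\pi(Y)$ were only the cone point the only available slice is the whole variety and the induction stalls. That degenerate case can be excluded (e.g.\ $\pi^{-1}(0)$ is Lagrangian, so $\pi(Y)=\{0\}$ forces $\dim Y=0$, the base case), but you did not supply this, and without it the induction has a hole exactly where Example \ref{quiver example}-type phenomena live.

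A second, smaller gap: you assert that $\omega_Y$ ``descends to a symplectic form on $(X')^{\mathrm{reg}}$.'' What is automatic is that the Poisson bivector extends from the isomorphism locus of $\pi'$ (whose complement has codimension $\geq 2$) to all of $X'$ by normality. Nondegeneracy of that extension on $(X')^{\mathrm{reg}}$ is not automatic: the paper proves it by taking the top exterior power of the bivector as a section of the anticanonical bundle of $(X')^{\mathrm{reg}}$, whose zero locus is either a divisor or empty, and then observing it cannot be a divisor since it misses an open set with codimension-$\geq 2$ complement. That step should not be waved off as a standard fact.
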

As above, we will write $Y=Y(\mathbf v,\mathbf w)$ and $X = X(\mathbf v,\mathbf w)$.
In order to show that a smooth symplectic variety is a symplectic resolution, we must show that it is birational to its affinisation, and that the affinisation is a normal variety. Now while we have the natural projective morphism $\pi\colon Y \to X$, it may be that this morphism is \textit{not} the affinisation of $Y$ (see Example \ref{quiver example} below).
\begin{proof}[Proof of Proposition \ref{smooth is resolution}]
Since $Y$ is connected and $\pi$ is projective, its image $\pi(Y)$ is a closed irreducible subset of $X$. Moreover, since the morphism $Y\rightarrow X$ is projective and Poisson by construction, its image is a closed Poisson subscheme of $X$, in particular a union of orbit strata $X_{(H)}$ of the affine quotient.

By the proof in \cite[\S 3.2]{N00} of the slice theorem recalled above, the fibre $\pi^{-1}(x)$ over a point $x \in X_{(H)}$ is homotopic to an affine variety of (complex) dimension $\dim(Y)-\dim(X_{(H)})$ and hence since there are only finitely many strata, there must exist a stratum for which 
$\dim(Y) = \dim(X_{(H)})$ (see also \cite[Lemma 10.4]{N98}). Write $V:=X_{(H)}$ for this stratum. It follows $V$ is open dense in the image $\pi(Y)$, and $\pi$ has finite fibres over $V$. But then again by the slice theorem, the fibre over $x \in V$ is homotopic to $X_x$ which is connected by \cite{C01}, and hence $\pi$ is in fact bijective over $V$, and so $Y$ and $\pi(Y)$ are birational. As $Y$ is smooth, the morphism $\pi$ factors through the normalisation, $N$ say, of $\pi(Y)$, and hence the induced morphism $\tilde{\pi}\colon Y \to N$ is still birational (and again projective). Now the complement of $V$ in $\pi(Y)$ is of codimension $2$, since it is a union of the strata of $X$, thus the locus over which $\pi$ is not an isomorphism is of codimension $2$ in $\pi(X)$. Since $\pi(X)$ is thus nonsingular in codimension one, it follows that $\tilde{\pi}(X) \rightarrow \pi(X)$ is an isomorphism in codimension one, and thus so is $\tilde{pi}$. But then (identifying $V$ with its preimage in $N$) any function defined on $V$ extends uniquely to a function on all of $N$ by normality, and thus $\tilde{\pi}_*(\mathcal O_{Y}) = \mathcal O_{N}$, so that $N$ is the affinisation of $Y$. 

Finally to see that $Y$ is a symplectic resolution, we need to check that $N$ is a symplectic variety, that is, that $N$ carries a symplectic form on its smooth locus. However, the Poisson bivector field associated to the symplectic form on $V$ extends to all of $N$ by normality, since the complement of $V$ is of codimension $2$ in $N$.  But then its top exterior power is a section of the anticanonical bundle of the smooth locus $N^\text{reg}$ of $N$, and its vanishing locus is precisely the locus on which the Poisson structure is degenerate. The latter locus is either empty or a divisor in $N^\text{reg}$, and since we know the Poisson structure is nondegenerate on $V$, the locus is empty in $N^{\text{reg}}$. 
\end{proof}

\begin{remark}
Note that the preceeding proof shows slightly more than is asserted in the statement: the smooth symplectic variety $Y(\mathbf v,\mathbf w)$ is a symplectic resolution of the normalization of a Poisson integral subscheme of $X(\mathbf v,\mathbf w)$. 
\end{remark}

Finally, we note that a smooth quiver variety is always a {\em conical} symplectic resolution. Indeed, if we let $\mathbb G_m$ act on the vector space $M(\mathbf v,\mathbf w)$ with weight $1$, we obtain $\mathbb G_m$-actions on $X(\mathbf v,\mathbf w)$ and $Y(\mathbf v,\mathbf w)$ for which $\pi$ is equivariant.  The action has weight $2$ on the symplectic form on $Y(\mathbf v,\mathbf w)$. The following is then clear:

\begin{lemma}
The induced $\mathbb G_m$-action on $Y(\mathbf v,\mathbf w)$ gives $Y(\mathbf v,\mathbf w)$ the structure of a conical symplectic resolution.
\end{lemma}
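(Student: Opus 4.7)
The plan is to verify the three defining conditions of Definition~\ref{def:conical} for the affinization $N$ of $Y(\mathbf{v},\mathbf{w})$, which by Proposition~\ref{smooth is resolution} is a symplectic variety of which $Y(\mathbf{v},\mathbf{w})$ is a symplectic resolution. Concretely, I need: a nonnegative grading on $\mathbb{C}[N]$, the equality $\mathbb{C}[N]^{\mathbb{G}_m} = \mathbb{C}$, and that the symplectic form transforms with some positive weight $d$. The strategy is simply to follow the weight-$1$ scaling action on $M(\mathbf{v},\mathbf{w})$ through each step of the quiver variety construction and observe that the relevant gradings descend.

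First I would note that $\mathbb{C}[M(\mathbf{v},\mathbf{w})]$ is a polynomial algebra generated in weight $1$, hence nonnegatively graded with $\mathbb{C}[M(\mathbf{v},\mathbf{w})]^{\mathbb{G}_m} = \mathbb{C}$. Since $\mu$ is quadratic in these coordinates, the defining equations of $\mu^{-1}(0)$ are homogeneous of weight $2$, so the closed subscheme $\mu^{-1}(0)$ is $\mathbb{G}_m$-stable and its coordinate ring inherits the same nonnegative grading with one-dimensional degree-zero part. Because $G_{\mathbf{v}}$ commutes with the $\mathbb{G}_m$-action, taking invariants yields a graded subring $\mathbb{C}[X(\mathbf{v},\mathbf{w})] = \mathbb{C}[\mu^{-1}(0)]^{G_{\mathbf{v}}}$ with the same conicity properties. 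The subscheme $\pi(Y) \subseteq X(\mathbf{v},\mathbf{w})$ is $\mathbb{G}_m$-stable because $\pi$ is already observed to be $\mathbb{G}_m$-equivariant, so its coordinate ring inherits the grading; and normalization preserves the $\mathbb{G}_m$-action, since for a reduced graded algebra the integral closure in the total ring of fractions is a graded subring. This yields the first two conicity conditions for $\mathbb{C}[N]$.

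For the weight of the symplectic form, the formula defining $\omega$ on $M(\mathbf{v},\mathbf{w})$ is a sum of products of two weight-$1$ linear functions, so $\omega$ has weight $2$ there; this weight survives pullback to $\mu^{-1}(0)$ and descent to $Y(\mathbf{v},\mathbf{w})$, and then extends, by the normality argument used at the end of the proof of Proposition~\ref{smooth is resolution}, to the symplectic form on $N^{\text{reg}}$. The only point requiring genuine argument is the compatibility of normalization with the grading, but this is standard for finitely generated reduced graded $\mathbb{C}$-algebras with constants in degree zero. With all three conditions checked, $N$ is a conical symplectic variety and $\pi \colon Y(\mathbf{v},\mathbf{w}) \to N$ is a conical symplectic resolution, as asserted.
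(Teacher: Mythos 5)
Your proposal is correct and follows the same route the paper implicitly takes: the paper states this lemma without a formal proof, having just observed that the weight-$1$ scaling on $M(\mathbf{v},\mathbf{w})$ descends to equivariant $\mathbb{G}_m$-actions on $X(\mathbf{v},\mathbf{w})$ and $Y(\mathbf{v},\mathbf{w})$ with the symplectic form in weight $2$, and then says the lemma "is then clear." Your write-up supplies exactly the omitted bookkeeping, and is careful about the one genuinely subtle point the paper glosses over, namely that the conical structure must be checked on the affinization $N$ of $Y(\mathbf{v},\mathbf{w})$ (which may be strictly smaller than $X(\mathbf{v},\mathbf{w})$), where the standard fact that normalization of a nonnegatively graded domain with $R_0=\mathbb{C}$ remains graded with degree-zero part $\mathbb{C}$ does the work.
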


\begin{remark}
Note that the $\mathbb G_m$-action need not be unique: in general, the automorphism group of a quiver variety may contain a higher rank torus, and many of its one-parameter subgroups may yield conical structures on the quiver variety.   Such group actions (or {\em flavor symmetries} of supersymmetric gauge theories) play a central role in the beautiful story of symplectic duality discovered by Braden-Licata-Proudfoot-Webster \cite{BLPW}.

It should also be noted that Nakajima in \cite{N98} uses a different $\mathbb G_m$-action which depends on a choice of orientation $\Omega$ of the quiver. For quivers with no oriented loops it is a conical action. See \cite[Proposition 3.22]{N98} for more details.
\end{remark}

\subsection{Universal Hamiltonian Reduction as the Versal Poisson Deformation}
The moment map for the action of $G_\mathbf v$ on $M(\mathbf v,\mathbf w)$ takes values in $\mathfrak{g}_{\mathbf{v}}^*$.  We have 
\[
\mathfrak z_\mathbf v = (\mathfrak g_\mathbf v/[\mathfrak g_\mathbf v,\mathfrak g_\mathbf v])^* \subseteq \mathfrak{g}_{\mathbf{v}}^*.
\]
We may thus consider the flat family of reductions $\mu^{-1}(\mathfrak z_\mathbf v)/\!\!/_\chi G_\mathbf v$ over $\mathfrak z_\mathbf v$, and it follows from the semiuniversal property that this deformation has an associated period map $\kappa \colon \mathfrak z_{\mathbf v} \to H^2(Y(\mathbf v,\mathbf w))$. 
It is known that the map $\kappa$ is actually given by the Chern character map: for details see \cite{L12}. 
\begin{theorem}[\cite{McGNKirwan}, Theorem 1.1] 
For any quiver variety the map 
\begin{displaymath}
H^*_{G_\mathbf v}(\star) \cong H^*_{G_\mathbf v}(E_V) \to H^*_{G_\mathbf v}(\mu^{-1}(0)^{\text{ss}}) \to H^*(Y(\mathbf v,\mathbf w))
\end{displaymath}
 is surjective.
 \end{theorem}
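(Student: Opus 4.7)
The plan is to establish the two surjections implicit in the factorization of the statement. The first map $H^*_{G_\mathbf v}(\star) \to H^*_{G_\mathbf v}(\mu^{-1}(0)^{\text{ss}})$ is induced by the inclusion into the contractible ambient linear representation $M(\mathbf v,\mathbf w)$, and the second map $H^*_{G_\mathbf v}(\mu^{-1}(0)^{\text{ss}}) \to H^*(Y(\mathbf v,\mathbf w))$ is the natural identification (up to $H^*_{\mathbb G_m}(\star)$-factors absorbed into the target) arising from the essentially free action of $G_\mathbf v/\mathbb G_m$ on $\mu^{-1}(0)^{\text{ss}}$. The task thus reduces to showing that $H^*_{G_\mathbf v}(M(\mathbf v,\mathbf w)) \twoheadrightarrow H^*_{G_\mathbf v}(\mu^{-1}(0)^{\text{ss}})$, which I would attack in two stages by factoring through the $\chi$-semistable locus $M^{\text{ss}} \subseteq M := M(\mathbf v,\mathbf w)$.

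Stage 1 (classical Kirwan). The first step is to show $H^*_{G_\mathbf v}(M) \twoheadrightarrow H^*_{G_\mathbf v}(M^{\text{ss}})$. Since $M$ is a linear $G_\mathbf v$-representation, the complement $M \smallsetminus M^{\text{ss}}$ is a finite union of Hesselink-Kempf-Ness strata $\{S_\beta\}_\beta$ attached to the character $\chi$: each $S_\beta$ deformation retracts equivariantly onto the $G_\mathbf v$-sweep of a critical locus $Z_\beta$ and has equivariant normal bundle with non-vanishing equivariant Euler class. Atiyah-Bott's criterion then shows the stratification is equivariantly perfect, so restriction to the open stratum $M^{\text{ss}}$ is surjective.

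Stage 2 (hyperk\"ahler Kirwan). The second and main step is to prove $H^*_{G_\mathbf v}(M^{\text{ss}}) \twoheadrightarrow H^*_{G_\mathbf v}(\mu^{-1}(0)^{\text{ss}})$. The idea is to construct a second Hesselink-Ness-type stratification of $M^{\text{ss}}$ whose minimum stratum is precisely $\mu^{-1}(0)^{\text{ss}}$. Using the conical $\mathbb G_m$-action on $M$ together with a cocharacter pairing with the moment map direction, one builds a Morse-theoretic functional whose gradient flow preserves $M^{\text{ss}}$ and whose critical locus is contained in $\mu^{-1}(0)^{\text{ss}}$. Each unstable stratum $T_\gamma$ is then argued, as in Stage 1, to be equivariantly perfect via a non-vanishing equivariant Euler class on its normal bundle, completing the surjectivity.

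The main obstacle is verifying equivariant perfection of the stratification in Stage 2. In the classical projective setting Atiyah-Bott exploit compactness to derive positivity of the equivariant Euler class and termination of the Morse inequalities; here $M^{\text{ss}}$ is non-compact and $\mu^{-1}(0)^{\text{ss}}$ is singular, so one must carefully set up the relevant (Borel-Moore or compactly supported) equivariant cohomology and verify that the gradient-flow retractions onto critical strata are algebraic and of finite type. A secondary technical point is to identify the Hesselink-Ness indexing combinatorics in terms of quiver data so as to guarantee the stratification has finitely many strata and that Euler-class positivity can be verified stratum-by-stratum using the weight decompositions inherent in the linear structure of $M(\mathbf v,\mathbf w)$.
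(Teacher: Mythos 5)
The theorem you are asked to justify is not proved in this paper but cited from \cite{McGNKirwan}, so your plan should be compared with the argument there. Your Stage~1 is fine and standard: for the GIT quotient of a linear $G_{\mathbf v}$-representation, the Hesselink--Kempf--Ness stratification of the ambient affine space is equivariantly perfect and the Atiyah--Bott mechanism gives $H^*_{G_{\mathbf v}}(M)\twoheadrightarrow H^*_{G_{\mathbf v}}(M^{\text{ss}})$. But Stage~2 is precisely the notorious problem of \emph{hyperk\"ahler Kirwan surjectivity}, and the Morse-theoretic route you describe has never been made to work in the required generality, despite sustained effort (Kirwan; Daskalopoulos--Wentworth--Wilkin; Harada--Wilkin, the last cited in the bibliography here as \cite{HW}). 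The difficulties you yourself list in the final paragraph are not patchable technicalities: the gradient flow of $\lVert\mu\rVert^2$ on the non-compact space $M^{\text{ss}}$ need not converge; the unstable ``strata'' for this second functional need not be smooth or carry well-defined equivariant normal bundles with invertible Euler classes; and there is no proof of the equivariant perfection you need at this level of generality. Identifying the obstacle is not the same as overcoming it --- this obstacle is the reason the theorem was open for decades.

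The actual proof in \cite{McGNKirwan} avoids Morse theory for the hyperk\"ahler step entirely and is algebraic and sheaf-theoretic in nature. Roughly, the authors work with equivariant D-modules on the quotient stacks $[T^*\mathrm{Rep}(Q)/G_{\mathbf v}]$ and $[\mu^{-1}(0)/G_{\mathbf v}]$, and establish a categorical Kirwan-surjectivity statement: restriction to the semistable locus is a quotient functor, and its kernel --- objects supported on the Kirwan--Ness unstable locus inside $\mu^{-1}(0)$ --- is controlled by an explicit estimate on the $\mathbb G_m$-weights attached to the unstable strata, proved by direct linear algebra with quiver representations. The cohomological surjectivity in the statement is then extracted formally from this categorical result. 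So while your factorization through $M^{\text{ss}}$ is a natural starting point, your Stage~2 as written would require solving by gradient-flow methods the very problem that \cite{McGNKirwan} was written to circumvent, and the new content of that paper is the replacement of the would-be Morse theory by a D-module weight argument.
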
 
\begin{cor}
The period map $\kappa$ is surjective. 
\end{cor}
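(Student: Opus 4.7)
The plan is to deduce surjectivity of $\kappa$ immediately from the preceding theorem by restricting the equivariant Kirwan surjection to cohomological degree two. The core identification we need is that $H^2_{G_\mathbf v}(\star) = H^2(BG_\mathbf v; \mathbb C)$ is canonically isomorphic to $\mathfrak z_\mathbf v$. This is standard: for $G_\mathbf v = \prod_{i\in I} \mathrm{GL}(V_i)$, the space $H^2(BG_\mathbf v;\mathbb C)$ has a basis given by first Chern classes of the determinant line bundles $\det(V_i)$ associated to the characters of the $\mathrm{GL}(V_i)$-factors, and this basis is dual to the natural basis of $\mathfrak z_\mathbf v^* = (\mathfrak g_\mathbf v/[\mathfrak g_\mathbf v,\mathfrak g_\mathbf v])$. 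So I would first spell out this isomorphism $\mathfrak z_\mathbf v \xrightarrow{\sim} H^2_{G_\mathbf v}(\star)$ carefully.

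Next, I would explain why, under this identification, the degree-two part of the Kirwan map $H^*_{G_\mathbf v}(\star) \to H^*(Y(\mathbf v, \mathbf w))$ coincides with the period map $\kappa$. This is exactly the content of the preceding sentence: $\kappa$ equals the Chern-character map (a fact imported from \cite{L12}). More concretely, pulling back a character $\chi \in \mathfrak z_\mathbf v$ along $\mu^{-1}(0)^{\mathrm{ss}} \to \star$ gives a $G_\mathbf v$-equivariant line bundle whose descent to $Y(\mathbf v, \mathbf w)$ has first Chern class equal to $\kappa(\chi)$; this equals the image of $\chi \in H^2(BG_\mathbf v; \mathbb C)$ under the Kirwan restriction map composing with restriction to $\mu^{-1}(0)^{\mathrm{ss}}$ and descent.

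Finally, I would invoke the cited theorem of \cite{McGNKirwan}: in particular, the degree-two component
\begin{equation*}
H^2_{G_\mathbf v}(\star) \longrightarrow H^2\bigl(Y(\mathbf v,\mathbf w)\bigr)
\end{equation*}
is surjective. Combining the two preceding steps, this surjection is exactly $\kappa\colon \mathfrak z_\mathbf v \to H^2(Y(\mathbf v,\mathbf w))$, so $\kappa$ is surjective as claimed.

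I expect no serious obstacle here, since the theorem does all the actual work; the only thing to be careful about is the bookkeeping in the first two steps, namely identifying $H^2(BG_\mathbf v;\mathbb C)$ with $\mathfrak z_\mathbf v$ in a way compatible with the Chern-character description of $\kappa$ (and making sure the normalisations of moment map versus character conventions match). This is routine, and once the identification is in place, the corollary is immediate.
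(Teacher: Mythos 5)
Your proposal is correct and matches the paper's intended argument, which the paper leaves implicit by simply juxtaposing the Kirwan surjectivity theorem, the identification of $\kappa$ with the Chern character map (imported from \cite{L12}), and the corollary. You have merely filled in the bookkeeping---identifying $H^2_{G_\mathbf v}(\star)$ with $\mathfrak z_\mathbf v$ and matching the degree-two Kirwan map with $\kappa$---which is the routine content the paper took as read.
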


Thus, up to identifying the kernel of $\kappa$, this yields a realization of the semiuniversal deformation.

\subsection{Quiver Varieties of Type ADE}
In this section, we provide refined information for type ADE quiver varieties.  
Thus, from now on, we will assume that the underlying graph of our quiver $Q$ is a simply-laced Dynkin diagram. As we noted before, the results in this subsection are presumably well known to experts. 

Let $P$ and $R$ be the weight and root\footnote{$Q$ would be more standard, but here we use $Q$ to denote a quiver.} lattices, respectively, of the corresponding semisimple Lie algebra, and let $\Phi$ denote the set of roots. As usual we let $\Phi^+$, $R^+$ and $P^+$ denote the positive elements in these sets, and we write $(\cdot,\cdot)$ for the pairing between $P$ and $R$.
It will be convenient to identify $\mathbf w$ with the element $\Lw = \sum_{i \in I} w_i\Lambda_i$ in $P^+$ the set of dominant weights, where $\{\Lambda_i\}_{i \in I}$ are the fundamental weights. Similarly we identify $\mathbf v$ with the element $\Qv = \sum_{i\in I} v_i\alpha_i$  of $R^+$, where $\{\alpha_i\}_{i \in I}$ are the simple roots. If $\nu =\sum_{i \in I} \nu_i\alpha_i \in R$ we write $\text{supp}_R(\nu) = \{i \in I: \nu_i \neq 0\}$, and similarly for $\mu = \sum_{i \in I} \mu_i \Lambda_i \in P$ we write $\text{supp}_P(\mu) = \{i \in I: \lambda_i \neq 0\}$.

Let $X(\mathbf v,\mathbf w)^{\text{reg}}$ be the locus of points in $X(\mathbf v,\mathbf w)$ corresponding to closed $G_\mathbf v$-orbits in $\mu^{-1}(0)$ with trivial stabilizer. (This is an open but possibly empty subset of $X(\mathbf v,\mathbf w)$.)
Note that, if we fix $\mathbf w$, then if $\mathbf v'$ has $v'_i \leq v_i$ we may choose $V'_i$ as a direct summand of $V_i$, and hence extending by zero yields a natural map $X(\mathbf v',\mathbf w) \to X(\mathbf v,\mathbf w)$ which is injective.

\begin{prop}
\label{Xstratification}
For an affine quiver variety $X$ attached to a simply-laced finite-type Dynkin diagram, the orbit strata (and hence the symplectic leaves) are given by
\[
X(\mathbf v,\mathbf w) = \bigsqcup_{\mathbf v'} X(\mathbf v',\mathbf w)^{\text{reg}},
\]
where the $\mathbf v'$ that occur as non-empty strata are those for which both $v'_i \leq v_i$ for all $i \in I$ and such that $\Lw - \alpha_{\mathbf v'} \in P^+$. 
\end{prop}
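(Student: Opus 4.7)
The plan is to identify each point of $X(\mathbf v,\mathbf w)$ with a semisimple representation of the relevant preprojective algebra and thereby compute the orbit strata explicitly. By standard affine GIT, points of $X(\mathbf v,\mathbf w)$ correspond to closed $G_\mathbf v$-orbits in $\mu^{-1}(0)$, equivalently to isomorphism classes of semisimple representations of the preprojective algebra of the framed doubled quiver (with fixed framing data $W$). Such a semisimple representation splits canonically as $(V,p,q) = (V_0, p|_{V_0}, q|_{V_0}) \oplus (V_1, 0, 0)$, where $V_0$ absorbs all the framing data and the complementary summand has $p, q$ vanishing; thus $(V_1, 0, 0)$ is a semisimple module over the unframed preprojective algebra $\Pi(Q)$.

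The ADE-specific input is that the simple modules of $\Pi(Q)$ are exactly the one-dimensional vertex simples $\{S_i\}_{i\in I}$: in finite type $\Pi(Q)$ is a basic finite-dimensional algebra whose primitive idempotents are the vertex idempotents, and any representation with a non-zero arrow necessarily contains a proper non-zero subrepresentation. Consequently $(V_1,0,0) \cong \bigoplus_{i\in I} S_i^{\oplus m_i}$, determined by its dimension vector $\mathbf v - \mathbf v'$, where $\mathbf v' := \dim V_0 \le \mathbf v$. The class of $(V,p,q)$ in $X(\mathbf v,\mathbf w)$ therefore agrees with the extend-by-zero image of the class of $(V_0, p, q) \in X(\mathbf v', \mathbf w)$. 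Moreover, the stabilizer of $(V,p,q)$ in $G_\mathbf v$ is $\operatorname{Aut}(V_0, p, q) \times \operatorname{Aut}(V_1, 0, 0) = \{1\}\times \prod_{i\in I} \operatorname{GL}(m_i,\mathbb C)$: trivial on $V_0$ since the framing rigidifies, and the full automorphism group of the zero representation on $V_1$. Its $G_\mathbf v$-conjugacy class depends only on $\mathbf v'$, so the orbit stratification is indexed by the allowed dimension vectors $\mathbf v' \le \mathbf v$.

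To conclude, I would identify the stratum indexed by $\mathbf v'$ with $X(\mathbf v',\mathbf w)^\reg$: by definition the latter is the locus of closed orbits in $\mu^{-1}(0)\subset \mathbf M(\mathbf v',\mathbf w)$ with trivial stabilizer in $G_{\mathbf v'}$, which by the preceding analysis corresponds precisely to the framed part $V_0$ having full dimension $\mathbf v'$. The extend-by-zero map $X(\mathbf v',\mathbf w)\hookrightarrow X(\mathbf v,\mathbf w)$ then identifies this with the indexed stratum in $X(\mathbf v,\mathbf w)$. The last ingredient is the non-emptiness criterion: $X(\mathbf v',\mathbf w)^\reg\neq\emptyset$ if and only if $\Lw - \alpha_{\mathbf v'} \in P^+$, a classical ADE result due to Nakajima, building on Crawley-Boevey's analysis of simple modules of framed preprojective algebras. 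I expect this dominance criterion to be the main obstacle in a careful write-up, while the decomposition and stabilizer analysis follow routinely from the semisimple classification and Schur's lemma.
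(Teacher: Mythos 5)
Your proposal is correct, and for the stratification part it takes a genuinely more self-contained route than the paper does. Where you explicitly decompose a semisimple representation of the framed preprojective algebra as $V_0 \oplus \bigoplus_i S_i^{\oplus m_i}$ (a unique ``framed'' simple plus vertex simples) and read off the $G_{\mathbf v}$-stabilizer as $\{1\} \times \prod_i \operatorname{GL}(m_i, \mathbb C)$, the paper simply cites Remark~3.28 of Nakajima \cite{N98} for the assertion that the leaves are the $X(\mathbf v',\mathbf w)^{\text{reg}}$. Your argument has the advantage of making the ADE input transparent (namely that $\Pi(Q)$ is finite-dimensional and basic with only the vertex simples), at the cost of requiring the reader to already trust the dictionary between closed orbits and semisimple modules. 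Your instinct that the dominance criterion is ``the main obstacle'' is exactly right and worth flagging: the result you want to cite from Nakajima (Corollary~10.8 of \cite{N98}) actually says $X(\mathbf v',\mathbf w)^{\text{reg}}\neq\emptyset$ if and only if $\Lambda_{\mathbf w}-\alpha_{\mathbf v'}$ is \emph{both} dominant \emph{and} a weight of $L(\Lambda_{\mathbf w})$. The paper closes that gap with a short representation-theoretic argument: the weight set $\Pi(\Lambda_{\mathbf w})$ is a saturated set in the sense of Humphreys \cite[\S 13.4]{H}, so a dominant weight $\mu$ lies in $\Pi(\Lambda_{\mathbf w})$ if and only if $\Lambda_{\mathbf w}-\mu$ lies in the positive root cone, and for $\mu = \Lambda_{\mathbf w}-\alpha_{\mathbf v'}$ this last condition is automatic. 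Without that step a direct citation to Nakajima does not quite yield the clean criterion you state, so you should fold it in.
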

\begin{proof}
This is almost contained in \cite{N98}. Indeed it is shown in Remark $3.28$ of that paper that for quiver of type $ADE$, the symplectic leaves are all of the form $X(\mathbf v'\mathbf w)^{\text{reg}}$, and moreover Corollary $10.8$ therein shows that $X(\mathbf v,\mathbf w)^{\text{reg}}$ is nonempty if and only if $\Lw -\Qv \in P^+$, and $\Lw -\Qv$ is a weight of the irreducible representation $L(\Lw)$ of highest weight $\Lw$. But it is known that the weights $\Pi(\Lw)$ of the irreducible representation $L(\Lw)$ are, in the sense of \cite[\S 13.4]{H}, a saturated set with highest weight $\Lw$, and so, by Lemma B of \textit{op. cit.} a weight $\mu$ lies in $P^+\cap \Pi(\Lw)$ if and only if $\Lw - \mu \in Q^+$. Applying this to $\mu = \Lw - \Qv$ gives the result. 
\end{proof}

Finally we note that in the finite type case, $X(\mathbf v,\mathbf w)$ always has a symplectic resolution.

\begin{prop}\label{prop:res-exists}
Let $X(\mathbf v,\mathbf w)$ be a finite-type affine quiver variety. Then there is a $\mathbf v' \in \mathbb Z^I$ with $\alpha_{\mathbf v'} \leq \Qv$ such that $X(\mathbf v',\mathbf w)\cong X(\mathbf v,\mathbf w)$ and  $\pi_{\mathbf v'}\colon Y(\mathbf v',\mathbf w)\to X(\mathbf v',\mathbf w)$ is a symplectic resolution.
\end{prop}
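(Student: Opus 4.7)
I would aim to take $\mathbf v'$ to be the dimension vector whose regular stratum $X(\mathbf v',\mathbf w)^{\text{reg}}$ is the dense symplectic leaf of $X(\mathbf v,\mathbf w)$. In the finite-type case $X(\mathbf v,\mathbf w)$ is irreducible (cf.\ \cite{N98}), so Proposition \ref{Xstratification} guarantees such a $\mathbf v'$ exists and is unique, with $v'_i\le v_i$ (equivalently $\alpha_{\mathbf v'}\le\alpha_{\mathbf v}$) and $\Lambda_{\mathbf w}-\alpha_{\mathbf v'}\in P^+$. Explicitly, $\mathbf v'$ is the unique dimension vector satisfying these constraints and maximising the stratum dimension $2(\mathbf v')^t\mathbf w-(\mathbf v')^t C\mathbf v'$.

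Next I would verify that the natural closed embedding $X(\mathbf v',\mathbf w)\hookrightarrow X(\mathbf v,\mathbf w)$ obtained by extending module data by zero along $V'\hookrightarrow V$ is an isomorphism. By the choice of $\mathbf v'$, this map identifies the top stratum of the source with the dense stratum of the target; since both sides are irreducible normal varieties (by Theorem \ref{prop:affine-is-symplectic}) of the same dimension sharing this common dense open subset, the closed embedding is an isomorphism.

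Finally I would argue that $\pi_{\mathbf v'}\colon Y(\mathbf v',\mathbf w)\to X(\mathbf v',\mathbf w)$ is a symplectic resolution. From the proof of Proposition \ref{smooth is resolution}, the image of $\pi_{\mathbf v'}$ is a closed union of orbit strata containing one of top dimension $\dim Y(\mathbf v',\mathbf w)=2(\mathbf v')^t\mathbf w-(\mathbf v')^t C\mathbf v'$. By the stratum dimension formula in Proposition \ref{Xstratification}, this value is attained only by $X(\mathbf v',\mathbf w)^{\text{reg}}$, which is therefore contained in the image. Its closure is all of $X(\mathbf v',\mathbf w)$, so $\pi_{\mathbf v'}$ is surjective; as $X(\mathbf v',\mathbf w)$ is already normal, Proposition \ref{smooth is resolution} now yields that $\pi_{\mathbf v'}$ is the desired symplectic resolution.

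The main technical point is the second paragraph: upgrading the injection on points (which is all that is recorded explicitly in Section 3.4) to a scheme-theoretic isomorphism of normal varieties. Everything else is a careful packaging of Propositions \ref{Xstratification} and \ref{smooth is resolution} together with the dimension formula for quiver varieties.
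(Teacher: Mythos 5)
Your proposal is correct and follows essentially the same route as the paper's: choose $\mathbf v'$ via Proposition~\ref{Xstratification}, upgrade the extension-by-zero morphism $X(\mathbf v',\mathbf w)\to X(\mathbf v,\mathbf w)$ to an isomorphism using normality, and conclude from surjectivity of $\pi_{\mathbf v'}$ together with Proposition~\ref{smooth is resolution}. The only cosmetic differences are that you directly cite Proposition~\ref{smooth is resolution} for the symplectic-resolution conclusion (where the paper re-verifies nondegeneracy of the Poisson bivector by hand), and you call the map a closed embedding where the paper phrases the isomorphism step as bijectivity plus normality of the target.
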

\begin{proof}
By Proposition \ref{Xstratification} there is a $\mathbf v'$ with $X(\mathbf v',\mathbf w)^{\text{reg}}$ dense and open in $X(\mathbf v,\mathbf w)$. It follows that the natural injective morphism $i\colon X(\mathbf v',\mathbf w) \to X(\mathbf v,\mathbf w)$ is surjective, and hence bijective. Since by the main result of \cite{C03}, the variety $X(\mathbf v,\mathbf w)$ is normal, it follows that $i$ is an isomorphism. Corollary 10.11 of \cite{N98} shows that the projective morphism $\pi_\mathbf v$ is a semismall resolution of its image, and moreover all strata contained in the image are relevant. Since $\dim(X(\mathbf v',\mathbf w)) = \dim(Y(\mathbf v',\mathbf w))$ it follows that $\pi_{\mathbf v'}$ is dominant, and so the image is all of $X(\mathbf v,\mathbf w)$. Now the pullback of the symplectic form on $X(\mathbf v',\mathbf w)^\text{reg}$ is readily seen to coincide with the restriction of the symplectic form on $Y(\mathbf v',\mathbf w)$, so that it follows $\pi_\mathbf v$ is a symplectic resolution provided we know $X(\mathbf v',\mathbf w)^{\text{reg}}$ is the entire smooth locus.

To see this note that the Poisson bracket associated to the symplectic form extends to all of $X(\mathbf v',\mathbf w)$ by normality, and then by considering the top exterior power of the associated bivector field we see that its degeneracy locus within the smooth locus of $X(\mathbf v',\mathbf w)$ must either be a divisor or empty. Since it does not intersect $X(\mathbf v',\mathbf w)^{\text{reg}}$ it is empty.
\end{proof}

\begin{example}\label{quiver example}
The above allows us to see one way in which $\pi \colon Y(\mathbf v,\mathbf w)\to X(\mathbf v,\mathbf w)$ need not be a resolution: Indeed if we take $Q$ to be the quiver with one node, and $v=w=2$ then $Y(2,2)$ has dimension $0$ (and so is a point) while $X(2,2)$ is the rational singularity $\mathbb C^2/\{\pm 1\}$, and thus $\pi$ is not surjective. An explicit calculation shows that the zero fibre $\mu^{-1}(0)$ of the moment map has three irreducible components, two of which are picked out by stability conditions. These components each contain a free $G_\mathbf v$-orbit however, so that the invariant functions are constant on each, while on the third component there are no free orbits, and there are non-constant $G_\mathbf v$-invariant functions on it.
\end{example}

\section{Basics of Symplectic Springer Theory}
\label{Basics}
We lay out some general algebraic and topological features of symplectic Springer theory.  We let $k$ denote a coefficient field for constructible sheaves (and their cohomologies).
\subsection{Harish-Chandra and Springer Sheaves}
We maintain the notation of Diagram \eqref{main diagram}.  
\begin{definition}
The constructible complex 
\begin{displaymath}
\mathsf{HC} := \widetilde{\pi}_!k_{\mathcal{Y}}[\operatorname{dim}(\mathcal{Y})]
\end{displaymath}
 is the {\em symplectic Harish-Chandra sheaf} on $\mathcal{X}$.  The constructible complex 
 \begin{displaymath}
 \mathsf{Spr} := \pi_!k_{Y}[\operatorname{dim}(Y)]
 \end{displaymath}
  is the {\em symplectic Springer sheaf} on $X$.
\end{definition}
We collect some basic properties below. Let $\operatorname{Perv}_{\operatorname{sympl}}(X)$ denote the category of perverse sheaves on $X$ (with coefficients in $k$) which are smooth along the stratification by symplectic leaves.

\begin{theorem}
\label{Springer properties}
\mbox{} Let $W$ denote the symplectic Galois group.  
\begin{enumerate}
\item The complexes $\mathsf{HC}$ and $\mathsf{Spr}$ are perverse sheaves, which are semisimple when $\mathsf k$ has characteristic zero. Moreover $\mathsf{HC}$ is the intersection cohomology complex on $\mathcal X$ extending the local system on $\mathcal X^{\text{reg}}$ given by the regular representation of $W$ the symplectic Galois group.
\item We have $i_X^*\mathsf{HC} \cong \mathsf{Spr}$, where $i_X\colon X  \hookrightarrow \mathcal X$. 
\item We have a natural algebra isomorphism
\begin{displaymath}
k[W]\xrightarrow{a} \operatorname{End}(\mathsf{HC}).
\end{displaymath}
\item We get an adjoint pair of functors:
\begin{displaymath}
(-\otimes_{k[W]} \mathsf{Spr}): k[W]-\operatorname{mod} \leftrightarrows \operatorname{Perv}_{\operatorname{sympl}}(X): \operatorname{Hom}(\mathsf{Spr}, -).
\end{displaymath}

\end{enumerate}
\end{theorem}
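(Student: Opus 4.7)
The plan is to leverage the smallness of $\tilde{\pi}$ proved in Proposition \ref{smallness}, together with proper base change along the main square of Diagram \eqref{main diagram}, and deduce (3)--(4) from (1) by formal arguments.

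For part (1), the first task is to identify $\mathcal{X}^{\text{reg}}$ as the complement in $\mathcal{X}$ of the preimage under $\psi_X$ of the discriminant arrangement in $B_X$ described in Theorem \ref{Namikawa discriminant}. Over this locus the birational map $\Pi\colon \mathcal{Y}\to\mathcal{X}'$ is an isomorphism, so $\tilde{\pi}$ restricts to a pullback of the finite Galois cover $q\colon B_Y\to B_X$ and is therefore a principal $W$-bundle. Hence $(\tilde{\pi}_!k_{\mathcal{Y}})|_{\mathcal{X}^{\text{reg}}}$ is the local system attached to the left regular representation $k[W]$. Because $\tilde{\pi}$ is proper and small with $\mathcal{Y}$ smooth, $\mathsf{HC}$ is perverse and equals the intersection cohomology extension of this local system. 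For $\mathsf{Spr}$, Kaledin's semismallness of $\pi$ together with smoothness of $Y$ gives perversity, and semisimplicity in characteristic zero is an instance of the decomposition theorem applied to the proper maps $\tilde{\pi}$ and $\pi$.

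For part (2), I would apply proper base change to the Cartesian square obtained by restricting Diagram \eqref{main diagram} over $0 \in B_X$: this gives $i_X^*\tilde{\pi}_!k_{\mathcal{Y}}\cong \pi_!i_Y^*k_{\mathcal{Y}}=\pi_!k_Y$, which after accounting for the shift (the codimension of $X$ in $\mathcal{X}$ equals $\dim B_X=\dim\mathcal{Y}-\dim Y$) yields the claimed isomorphism. For part (3), the fully faithful nature of the intermediate extension identifies $\on{End}(\mathsf{HC})$ with the endomorphism ring of the local system, that is, with the $W$-equivariant endomorphisms of $k[W]$ under its left regular action; these are given by right multiplication, so $\on{End}(\mathsf{HC})\cong k[W]^{\on{op}}\cong k[W]$ via $w\mapsto w^{-1}$. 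The algebra structure is the natural one.

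For part (4), combining (2) and (3) equips $\mathsf{Spr}$ with a left $k[W]$-action in $\operatorname{Perv}_{\operatorname{sympl}}(X)$. Then for any finite-dimensional $W$-representation $V$, the complex $V\otimes_{k[W]}\mathsf{Spr}$ is a direct summand of $V\otimes_k\mathsf{Spr}\cong \mathsf{Spr}^{\oplus \dim V}$, hence perverse and smooth along the symplectic stratification; the adjunction with $\on{Hom}(\mathsf{Spr},-)$ is the usual one for tensor--Hom over $k[W]$. The main obstacle I anticipate is the verification in (1) that the local system on $\mathcal{X}^{\text{reg}}$ is genuinely $k[W]$ rather than some twist, which requires carefully comparing the monodromy of $\tilde{\pi}$ over $\mathcal{X}^{\text{reg}}$ with that of $q$ over $B_X^{\text{reg}}$ using Namikawa's description of the discriminant. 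Once this geometric input is in place, parts (2)--(4) are essentially formal.
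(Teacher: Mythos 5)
Your proposal tracks the paper's own argument quite closely for parts (1)--(3): identify $\mathcal{X}^{\text{reg}}$ as the pullback of the complement of the discriminant, observe that $\tilde{\pi}$ restricts there to an unramified Galois $W$-cover so that $\mathsf{HC}$ is the intermediate extension of the local system attached to $k[W]$, deduce perversity from (semi)smallness, obtain (2) by proper base change, and compute $\operatorname{End}(\mathsf{HC})$ via the fact that intermediate extension does not alter endomorphism rings. The paper proves this last fact as a short lemma (no nontrivial endomorphism of an intermediate extension can be supported on the boundary), whereas you invoke ``full faithfulness of $j_{!*}$''---same content, slightly loose phrasing since $j_{!*}$ is not literally a fully faithful functor, but the endomorphism statement is correct.

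However, there is a genuine gap in your treatment of (4). You write ``combining (2) and (3) equips $\mathsf{Spr}$ with a left $k[W]$-action in $\operatorname{Perv}_{\operatorname{sympl}}(X)$,'' but this presupposes that $\mathsf{Spr}$ actually lies in $\operatorname{Perv}_{\operatorname{sympl}}(X)$, i.e.\ that $\pi_!k_Y[\dim Y]$ is constructible with respect to the stratification of $X$ by symplectic leaves. This is not automatic from semismallness alone: the decomposition theorem gives you a direct sum of IC sheaves of local systems on \emph{some} locally closed strata, but one must argue that these strata are unions of symplectic leaves. The paper explicitly flags this as the one nonformal input to (4) and defers it to the nearby cycles construction of Section \ref{nearby cycles}, where it is shown that $\mathsf{Spr}$ arises as a nearby cycles sheaf along a map satisfying Thom's $a_f$ condition for a Whitney stratification of $X$ by symplectic leaves (Lemma \ref{Whitneystrata}), whence the desired constructibility. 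Your proposal silently assumes this. Separately, your argument that $V\otimes_{k[W]}\mathsf{Spr}$ is a \emph{direct summand} of $\mathsf{Spr}^{\oplus\dim V}$ uses semisimplicity of $k[W]$ and so only applies when $\operatorname{char}(k)\nmid|W|$; the paper's observation that the tensor--Hom adjunction is formal once $\mathsf{Spr}\in\operatorname{Perv}_{\operatorname{sympl}}(X)$ (since that category is abelian and one can define $M\otimes_{k[W]}\mathsf{Spr}$ via a presentation of $M$) is cleaner and works uniformly in all characteristics, which matters since the theorem is stated for an arbitrary coefficient field.
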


\begin{proof}
Statement (1) is immediate from the semismallness of $\pi: Y\rightarrow X$ and the smallness of the map $\tilde{\pi}$ established in Proposition \ref{smallness}. Note that the semisimplicity statement about the local system will follow from the proof of $(3)$ below.  Statement (2) follows from proper base change.  

For statement (3), let $B_X^{\operatorname{reg}}$ denote the open subset of the base $B_X$ of the versal deformation over which $\mathcal{X}$ has smooth fibers.  Write $\mathcal{X}^{\operatorname{reg}}$ for $\mathcal{X}\times_{B_X} B_X^{\operatorname{reg}}$ and let 
$j: \mathcal{X}^{\operatorname{reg}}\hookrightarrow \mathcal{X}$ denote the inclusion.  Note that $\widetilde{\pi}^{-1}(\mathcal{X}^{\operatorname{reg}})
\rightarrow \mathcal{X}^{\operatorname{reg}}$ is an unramified Galois $W$-cover.  Now, 
observe that smallness of $\widetilde{\pi}$ implies that $\mathsf{HC} = j_{!\ast}\widetilde{\pi}_*k_{\widetilde{\pi}^{-1}(\mathcal{X}^{reg})}[-\operatorname{dim}(\mathcal{X})]$.  We use the following standard lemma. 
\begin{lemma}
For a dense open subset $U\xrightarrow{\iota} X$ and a local system $\mathcal{L}$ on $U$, 
\begin{displaymath}
\operatorname{End}_X(\iota_{!\ast}\mathcal{L}) = \operatorname{End}_U(\mathcal{L}).
\end{displaymath}
\end{lemma}
\begin{proof}[Proof of Lemma]
We have homomorphisms
\begin{displaymath}
 \operatorname{End}_U(\mathcal{L}) \longrightarrow \operatorname{End}_X(\iota_{!\ast}\mathcal{L}) \longrightarrow \operatorname{End}_U(\mathcal{L}),
 \end{displaymath}
 the first given by $(-)_{!\ast}$ and the second by restriction to $U$.  The composite is the identity; hence it suffices to show that the latter map is injective.  
If $\phi\in \operatorname{End}_X(\iota_{!\ast}\mathcal{L})$ and $\phi|_U = 0$, it follows that $\phi(\iota_{!\ast}\mathcal{L})$ is supported on $X\smallsetminus U$.  But
the intermediate extension $\iota_{!\ast}\mathcal{L}$ is the unique extension that has no sub- or quotient object supported on $X\smallsetminus U$, so $\operatorname{im}(\phi)=0$, as required.
\end{proof}
It follows that
\begin{displaymath}
\operatorname{End}(\mathsf{HC}) \cong \operatorname{End}\big(\widetilde{\pi}_*k_{\widetilde{\pi}^{-1}(\mathcal{X}^{reg})}\big)\cong k[W].
\end{displaymath}
This defines the isomorphism $a$.

Finally note that the existence of the adjunction claimed in (4) is formal, provided we prove that $\mathsf{Spr}$ is smooth along the stratification of $X$ by symplectic leaves. This is a consequence of the nearby cycles description of the Springer sheaf which we will give in Section \ref{nearby cycles}, since the nearby cycles sheaf is constructible with respect to a Whitney stratification of the exceptional fibre that satisfies the $a_f$ condition. 

\end{proof}

\begin{remark}
It is perhaps natural (or possibly abusive) to call the kernel of the functor $\operatorname{Hom}(\mathsf{Spr}, -)$ the subcategory of {\em cuspidal symplectic perverse sheaves} on $X$.  It seems likely to be hard to characterise this category in general (or even tell when it is nonzero).
\end{remark}
\begin{remark}
As we will see in examples later in this section, the stronger version of assertion (3), which readers familiar with the classical theory might have expected to see, namely that the composition of the map $a$ with the natural map $\operatorname{End}(\mathsf{HC})\rightarrow \operatorname{End}(\mathsf{Spr})$ is an isomorphism, is not true in general.  Neither, then, is it true that the induction functor from $k[W]$-modules to $\operatorname{Perv}_{\operatorname{sympl}}(X)$ is faithful. Indeed we will see later in this section examples which show that the map from $k[W]$ to $\text{End}(\mathsf{Spr})$ need neither be injective nor surjective. The following Lemmas will however imply a weaker statement, namely that $W$ embeds into the automorphisms of $\mathsf{Spr}$.
\end{remark}

Next we note that the deformation $\tilde{\pi}\colon \mathcal Y \to \mathcal X$ gives us a more elementary way of constructing an action of $W$ on the cohomology of $Y$. 
We first remark that the endomorphism algebra of the sheaf $\mathsf{Spr}$ has a natural geometric interpretation.
Let $Z = X\times_Y X$ be the fibre product of $X$ with itself over $Y$ (the ``symplectic Steinberg variety''). Its Borel-Moore homology has a natural convolution structure preserving the middle-dimensional piece $H^{\text{mid}}_{BM}(Z)$; the latter has a natural basis given by the irreducible components of $Z$. 
\begin{lemma}\label{Springer-via-BM}
We have 
\[
\text{End}(\mathsf{Spr})\cong H^{\operatorname{mid}}_{BM}(Z).
\]
\end{lemma}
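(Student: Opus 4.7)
The plan is to deduce this via standard manipulations: adjunction, proper base change, and Verdier duality on the smooth variety $Y$. Setting $d := \dim_{\mathbb C} Y$, the first step is to apply the $(\pi_!, \pi^!)$ adjunction, together with the fact that $\pi$ is proper so that $\pi_! = \pi_*$, to obtain
\[
\operatorname{End}(\mathsf{Spr}) = \operatorname{Hom}_X(\pi_! k_Y[d], \pi_! k_Y[d]) \cong \operatorname{Hom}_Y(k_Y[d], \pi^! \pi_* k_Y[d]).
\]

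Next I would apply proper base change to the cartesian square determined by $Z = Y \times_X Y$, with projections $p_1, p_2 \colon Z \to Y$. Since $\pi$ is proper, the base-change isomorphism for $\pi^!$ yields $\pi^! \pi_* \cong (p_2)_* \, p_1^!$. Moreover, since $Y$ is smooth of complex dimension $d$, the Verdier dualizing complex satisfies $\omega_Y \cong k_Y[2d]$, so $p_1^! k_Y[d] \cong \omega_Z[-d]$. Substituting and simplifying, the computation reduces to
\[
\operatorname{End}(\mathsf{Spr}) \cong \mathbb H^{-2d}\bigl(Z, \omega_Z\bigr) = H_{2d}^{BM}(Z),
\]
via the standard identification of Borel--Moore homology with hypercohomology of the dualizing complex.

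Finally, I must observe that $2d$ really is the middle dimension for the Borel--Moore homology of $Z$. By Kaledin's semismallness of the symplectic resolution $\pi$ (recalled in Section~\ref{Symplectic resolutions}), every irreducible component of $Z$ has complex dimension at most $d$, while the diagonal $\Delta_Y \subset Z$ shows this bound is attained; hence $H^{BM}_{2d}(Z) = H^{\operatorname{mid}}_{BM}(Z)$. The argument is essentially formal: the only delicate point is the bookkeeping of degree shifts and applying the correct form of proper base change for the upper-shriek functor, but no step presents a genuine obstacle.
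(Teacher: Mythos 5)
Your proof is correct and is precisely the standard argument from Section 3.4 of Chriss--Ginzburg \cite{CG}, which is all the paper provides by way of proof; the chain of adjunction, adjoint proper base change for $\pi^!\pi_*$, and the identification $\operatorname{Hom}_Z(k_Z,\omega_Z[-2d])\cong H^{BM}_{2d}(Z)$ is exactly right, and the semismallness of $\pi$ (Kaledin) is correctly invoked to see that degree $2d$ is the top, hence ``middle-dimensional,'' Borel--Moore degree for $Z$.
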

\begin{proof}
The argument is standard: see, for example, Section 3.4 of \cite{CG}.
\end{proof}

In terms of the identification $\text{End}(\mathsf{Spr}) \cong H^{\text{mid}}_{BM}(Z)$, the map $W$ to  $H^{\text{mid}}_{BM}(Z)$ can be seen as a specialization-of-cycles map, similar to the construction of Springer representations in 
\cite{CG}.

Since we are assuming that our symplectic resolution $\pi\colon Y\to X$ is conic, the fixed point locus $Z = Y^{\mathbb G_m}$ is a smooth projective variety contained in $L=\pi^{-1}(0)$ (where as usual we write $0$ for the cone point of $X$). Moreover, since this $\mathbb G_m$ action contracts $Y$ to $Z$, it follows we have isomorphisms in cohomology $H^*(Y) \cong H^*(L) \cong H^*(Z)$. (Note however that in general $L$ is not smooth and $Z \subsetneq L$.)

\begin{comment}
Next we observe that there is a natural action of $W$ on the cohomology of $Y$ which arises from the deformation $\mathcal Y$: since it is a topological fibration, the map $\mathcal Y\to B_Y$ is trivial, and so the complex of sheaves $(\psi_Y)_!(\C)$ is constant. As a result, the action of $W$ on $B_Y$ induces an action of $W$ on the global sections of $(\psi_Y)_!(\C)$, which is identified with its stalk cohomology at $0 \in B_Y$, which is $H^*(Y)$. Let us write $\rho\colon W \to \text{Aut}(H^*(Y))$ for this representation. 
\end{comment}

\begin{definition}
At a regular point $b \in B_Y$, that is, at a point outside the discriminant locus, the map $\tilde{\pi}$ is $W$-sheeted local isomorphism and we get an isomorphism between each fibre of $\psi_Y$ over the orbit $W\cdot b \subset B_Y$ and $\psi_X^{-1}(\pi(b))$. This induces an action of $W$ on the cohomology of the fibres (for example induced by the action of $W$ on the complex $(\psi_Y)_!(k)$). But the deformation $\mathcal Y$ is topologically trivial, so the cohomologies of the fibres of $\psi_Y$ are canonically identified, and hence we get an action on the cohomology of $Y$. We will denote this action by $\rho\colon W\to GL(H^*(Y))$. Note that the action on the fibres at regular points is precisely the one inducing the morphism $a$ as in Theorem \ref{Springer properties}.
\end{definition}

\begin{lemma}
The restriction of $\rho$ to $H^2(Y)$ agrees with the action of $W$ on $B_Y = H^2(Y)$.
\end{lemma}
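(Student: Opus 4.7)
The plan is to trace through the identification $B_Y \cong H^2(Y,\mathbb C)$ used in Diagram \eqref{main diagram} and show it is $W$-equivariant for the Galois action on the left and the action $\rho$ on the right. The central fact is that this identification is realised by a period map: for $b$ in the regular locus $B_Y^{\operatorname{reg}}$, the corresponding class in $H^2(Y,\mathbb C)$ is obtained by transporting the fibrewise symplectic class $[\omega_b]\in H^2(\mathcal Y_b,\mathbb C)$ across the canonical isomorphism $H^2(\mathcal Y_b,\mathbb C)\cong H^2(Y,\mathbb C)$ furnished by topological triviality of $\psi_Y$; in the conic setting $[\omega_Y]=0$, so no base shift is needed and this fibrewise recipe is the literal identification.

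Given this, I would next unpack the definition of $\rho(w)$ at a regular point $b \in B_Y$. By Proposition \ref{smallness} together with the description of the discriminant locus, the fibre maps $\tilde\pi_b\colon \mathcal Y_b\to\mathcal X_{q(b)}$ and $\tilde\pi_{wb}\colon \mathcal Y_{wb}\to\mathcal X_{q(b)}$ are both isomorphisms, so composing yields an explicit isomorphism $\phi_w := \tilde\pi_b^{-1}\circ\tilde\pi_{wb}\colon \mathcal Y_{wb}\xrightarrow{\sim}\mathcal Y_b$. Unwinding the construction of the $W$-action on $(\psi_Y)_!(k)$, one finds that, after using topological triviality to identify $H^*(Y)\cong H^*(\mathcal Y_b)\cong H^*(\mathcal Y_{wb})$, the action $\rho(w)$ on $H^*(Y)$ is simply $\phi_w^*$. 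The decisive one-line calculation is then that $\tilde\pi_b^*\omega_{\mathcal X_{q(b)}}=\omega_b$ and $\tilde\pi_{wb}^*\omega_{\mathcal X_{q(b)}}=\omega_{wb}$, so
\begin{displaymath}
\phi_w^*[\omega_b] \;=\; \phi_w^*\tilde\pi_b^*[\omega_{\mathcal X_{q(b)}}] \;=\; \tilde\pi_{wb}^*[\omega_{\mathcal X_{q(b)}}] \;=\; [\omega_{wb}].
\end{displaymath}
Read back through the period identification, this says $\rho(w)\cdot b = wb$ for every regular $b$; density of the regular locus and linearity of both actions on $H^2(Y,\mathbb C)$ then give the lemma.

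The hard part, such as it is, will be pinning down the period map precisely. One needs to confirm that, after the $\mathbb G_m$-equivariant algebraisation procedure of Proposition \ref{conic universality}, the resulting identification $B_Y\cong H^2(Y,\mathbb C)$ really is represented fibrewise by $b\mapsto[\omega_b]$ and with conventions compatible with Diagram \eqref{main diagram}. Granting that point, the argument reduces to the computation above. A minor secondary issue is to be careful about the direction in which $\phi_w$ is composed and to check that $\rho$ is being compared with the left Galois action (not its inverse), so that the identity element of $W$ acts as the identity on $H^2(Y)$.
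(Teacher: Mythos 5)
Your proof is correct and takes the same route the paper does: the paper's proof is a one-line reduction to the construction of the period map (citing Namikawa), whose whole content is that the identification $B_Y\cong H^2(Y,\mathbb C)$ is realised fibrewise by $b\mapsto[\omega_b]$ transported via topological triviality. You have simply unpacked that citation into the explicit computation $\phi_w^*[\omega_b]=[\omega_{wb}]$, which is exactly the calculation the reference covers.

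One small remark on the caveat you raise at the end: to pin down whether $\rho(w)$ is $\phi_w^*$ or $(\phi_{w^{-1}})^*$, checking that the identity of $W$ acts by the identity will not distinguish the action from its inverse; what does is checking that the composition law is respected (left versus right action), which is built in once one defines $\rho$ via the action of $W$ on the local system $(\psi_Y)_!k$ over the regular locus rather than pointwise. With that convention fixed, your one-line computation closes the argument exactly as the paper intends.
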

\begin{proof}
This follows from the construction of the period map, which is given by taking the homology class of the symplectic form of the fibres of $\psi_Y$. See for example Step 2 in the proof of Theorem 1.1 in \cite{Na2} for details.
\end{proof}

\begin{lemma}
\label{naive identification}
The action of $W$ on $H^*(Y)$ via $\rho$ coincides with the action of $W$ on $\pi_!(k_Y)_{0}= H^*(L)$ via the isomorphism $H^*(L)\cong H^*(Y)$ given by the $\mathbb G_m$-contraction.
\end{lemma}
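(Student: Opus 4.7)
The plan is to compare both actions at the stalk of the pushforward $(\theta_X)_!\mathsf{HC}$ at $0\in B_X$, which computes $H^*(Y)\cong H^*(L)$ in two ways matched precisely by the $\mathbb{G}_m$-contraction.

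First I would identify this common stalk. Using commutativity of \eqref{main diagram}, $(\theta_X)_!\mathsf{HC}\cong q_!(\theta_Y)_!k_{\mathcal Y}[\dim\mathcal Y]$. On the one hand, since $\theta_Y$ is topologically trivial (so $(\theta_Y)_!k_{\mathcal Y}$ is a constant complex on $B_Y$ with fibre $H^*(Y)$) and $q^{-1}(0)=\{0\}$ as a set, proper base change gives the stalk at $0\in B_X$ as $H^*(Y)$ (up to an overall shift). On the other hand, base change through the closed inclusion $\{0\}\hookrightarrow B_X$ identifies the same stalk with $R\Gamma(X,i_X^*\mathsf{HC}) = R\Gamma(X,\mathsf{Spr})$, and since $\mathsf{Spr}$ is $\mathbb{G}_m$-equivariant on the cone $X$ with unique attracting fixed point $0$, we have $R\Gamma(X,\mathsf{Spr}) = \mathsf{Spr}_0 = H^*(L)$. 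The comparison of the two descriptions of this stalk is exactly the $\mathbb{G}_m$-contraction isomorphism $H^*(Y)\cong H^*(L)$.

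Next I would compare the two $W$-actions on $(\theta_X)_!\mathsf{HC}$. The Springer-sheaf action is the pushforward of the $W$-action on $\mathsf{HC}$, which by construction (Theorem~\ref{Springer properties}(3)) is the intermediate extension of the deck-transformation action on the Galois cover $\widetilde\pi|\colon \mathcal{Y}^{\text{reg}}\to \mathcal{X}^{\text{reg}}$. The action $\rho$, unwinding its definition, arises from the $W$-equivariant structure on $(\theta_Y)_!k_{\mathcal Y}|_{B_Y^{\text{reg}}}$ (covering the $W$-action on $B_Y^{\text{reg}}$) whose operator between stalks at regular $b$ and $wb$ is $(\widetilde\pi_{wb})^{-1}\circ\widetilde\pi_b$; but this is exactly the restriction of the deck transformation $w$ on $\mathcal{Y}^{\text{reg}}$ to the fibre over $b$. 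Pushing forward via $q_!$, both actions restrict to the same $W$-action on $(\theta_X)_!\mathsf{HC}|_{B_X^{\text{reg}}}$.

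Finally, by the lemma $\operatorname{End}_X(\iota_{!\ast}\mathcal L)=\operatorname{End}_U(\mathcal L)$ used in the proof of Theorem~\ref{Springer properties}(3), the $W$-action on $\mathsf{HC}$ is determined by its restriction to $\mathcal{X}^{\text{reg}}$, and hence so is its pushforward to $B_X$. Since the two $W$-actions on $(\theta_X)_!\mathsf{HC}$ agree on $B_X^{\text{reg}}$, they agree everywhere; restricting to the stalk at $0$ yields the desired equality of $\rho$ with the Springer-sheaf action on $H^*(Y)\cong H^*(L)$. The main technical hurdle is careful handling of the equivariant structures: $W$ acts neither on $\mathcal Y$ nor on the family $\theta_Y$ as an ordinary action, but only as an equivariant structure covering the $W$-action on $B_Y$; one must verify that these equivariant structures transport compatibly under $q_!$ and $(\theta_X)_!$ so that the uniqueness-of-extension argument applies at the right categorical level.
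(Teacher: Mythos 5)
Your proposal uses the right ingredients — proper base change, the $\mathbb G_m$-contraction, topological triviality of $\theta_Y$, and agreement of the two actions over the regular locus — and it is structurally close to the paper's argument, which works with the chain
\[
H^*(Y)\;\cong\; H^*(\mathcal Y)\;=\;H^*(\mathcal X,\widetilde\pi_!k[-\dim\mathcal Y])\;\xrightarrow{\;\sim\;}\;(\widetilde\pi_!k)_0\;\cong\;(\pi_!k)_0,
\]
where the middle arrow is the $\mathbb G_m$-contraction isomorphism of \cite[Lemma 5.6]{Ka}.

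However, your final step has a genuine gap. You invoke the lemma $\operatorname{End}_X(\iota_{!*}\mathcal L)=\operatorname{End}_U(\mathcal L)$ to conclude that two $W$-actions on $(\theta_X)_!\mathsf{HC}$ agreeing on $B_X^{\text{reg}}$ must agree everywhere. But that lemma controls endomorphisms of the intermediate extension $\mathsf{HC}$ on $\mathcal X$; it says nothing about the complex $(\theta_X)_!\mathsf{HC}$ on $B_X$, which is \emph{not} an intermediate extension of a local system on $B_X^{\text{reg}}$ (its stalk rank jumps down at $0$, for example). Endomorphisms of a constructible complex are in general not determined by their restriction to a dense open, so the inference ``agree on $B_X^{\text{reg}}\Rightarrow$ agree everywhere'' is not justified. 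What the lemma does give you is that the \emph{Springer} action on $(\theta_X)_!\mathsf{HC}$, being $(\theta_X)_!$ of an endomorphism of $\mathsf{HC}$, is determined by the deck action on $\mathcal X^{\text{reg}}$; to make your argument close, you would also need to exhibit the $\rho$-action on $(\theta_X)_!\mathsf{HC}$ as $(\theta_X)_!$ of an endomorphism of $\mathsf{HC}$ — which is essentially the content of the lemma you are trying to prove. The paper avoids this circularity by never leaving $\mathcal X$: it applies the $\mathbb G_m$-contraction iso $H^*(\mathcal X,\mathcal F)\xrightarrow{\sim}\mathcal F_0$ directly to $\mathcal F=\widetilde\pi_!k$, using that this map is \emph{natural in $\mathcal F$} and hence automatically commutes with the $\operatorname{End}(\mathsf{HC})$-action; it then identifies the resulting $W$-action on $H^*(\mathcal Y)$ with $\rho$ by restricting to a regular fibre $\mathcal Y_b$ and comparing with the definition of $\rho$. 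I would recommend you restructure your last paragraph along those lines rather than appealing to a uniqueness-of-extension principle that does not apply to $(\theta_X)_!\mathsf{HC}$.
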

\begin{proof}
We have the following diagram:

\xymatrix{
& & & & \mathcal Y \ar[r]^{\psi_Y} \ar[d]_{\tilde{\pi}} & B_Y \ar[d]^{\pi} \\
& & & & \mathcal X \ar[r]_{\psi_X} & B_X \ar[r]^{=} & B_Y/W
}
As already noted, the $\mathbb G_m$-action on $X$ extends to the deformations $\mathcal X$ and $\mathcal Y$, and hence the sheaf $\tilde{\pi}_!(k)$ is equivariant with respect to the $\mathbb G_m$-action (since the constant sheaf $k_Y$ clearly is, and the morphism $\pi$ is equivariant). It follows (see for example \cite[Lemma 5.6]{Ka}) that the natural morphism $H^*(\mathcal X,\tilde{\pi}_!(k))\to \tilde{\pi}_!(k)_{0}$ is an isomorphism. Now
$
H^*(\mathcal Y,k) = H^*(\mathcal X,\pi_*(k)) = H^*(\mathcal X,\tilde{\pi}_!(k))).
$
But the map $\psi_Y$ is a topologically trivial fibration, so the restriction-to-fibres maps $H^*(\mathcal Y,k) \to H^*(\mathcal Y_t,k)$ are all isomorphisms. It follows readily that the composition $H^*(Y, k) \cong H^*(\mathcal Y, k) \to \tilde{\pi}_!(k)_{0} \cong \pi_!(k)_0$ is $W$-equivariant as required.
\end{proof}

\begin{cor}\label{cor:action}
The symplectic Galois group $W$ acts faithfully on the sheaf $\mathsf{Spr}$. Moreover this action yields, for any $x \in X$, a natural action of $W$ on the cohomology $H^*(\pi^{-1}(x))$.
\end{cor}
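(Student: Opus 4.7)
The plan is to bootstrap the action on $\mathsf{Spr}$ from the $k[W]$-action on $\mathsf{HC}$ established in Theorem~\ref{Springer properties}(3), and then to deduce faithfulness by restricting to a single stalk and using the already-verified compatibility with the deformation-theoretic construction.

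First, I would produce the action of $W$ on $\mathsf{Spr}$ by applying the functor $i_X^{\ast}$ to the algebra map $a\colon k[W]\to \operatorname{End}(\mathsf{HC})$ of Theorem~\ref{Springer properties}(3) and using the isomorphism $i_X^{\ast}\mathsf{HC}\cong \mathsf{Spr}$ of Theorem~\ref{Springer properties}(2). This immediately gives a $k$-algebra homomorphism $k[W]\to \operatorname{End}(\mathsf{Spr})$; taking the stalk at any $x\in X$ and shifting, we obtain a $W$-action on $H^{\ast}(\pi^{-1}(x))$, which proves the second assertion.

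To establish faithfulness, it suffices to exhibit a single $x\in X$ at which $W$ acts faithfully on $H^{\ast}(\pi^{-1}(x))$. I would take $x=0$, the cone point of the conical symplectic variety $X$. The stalk of $\mathsf{Spr}$ at $0$ is $H^{\ast}(L)$ where $L=\pi^{-1}(0)$, and by Lemma~\ref{naive identification} the induced $W$-action on $H^{\ast}(L)$ coincides (via the $\mathbb G_m$-contraction isomorphism $H^{\ast}(L)\cong H^{\ast}(Y)$) with the deformation-theoretic action $\rho\colon W\to GL(H^{\ast}(Y))$. Now restrict further to the degree-two piece $H^2(Y,\mathbb C)$: by the lemma preceding~\ref{naive identification}, $\rho|_{H^2(Y)}$ is identified with the standard $W$-action on $B_Y=H^2(Y,\mathbb C)$. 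Since $q\colon B_Y\to B_X$ is a Galois cover with group $W$, this action is tautologically faithful. Hence the $W$-action on $H^{\ast}(L)$, and therefore on $\mathsf{Spr}$, is faithful.

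The only point that requires care is ensuring that the action on $\mathsf{Spr}$ inherited from $\mathsf{HC}$ via $i_X^{\ast}$ really agrees with the deformation-theoretic action $\rho$ on the stalk at $0$; this is precisely the compatibility guaranteed by Lemma~\ref{naive identification}, since both actions arise from the same $W$-action on the nearby generic fibres of $\psi_Y$, transported in one case through the intermediate extension $j_{!\ast}$ (which defines $a$) and in the other through proper base change along $\psi_Y$ to the special fibre. I do not anticipate any serious obstacle once these identifications have been lined up; the key input has already been packaged in the two preceding lemmas together with Namikawa's theorem that $q$ is Galois.
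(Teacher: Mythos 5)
Your proposal is correct and follows essentially the same route as the paper: the action on $\mathsf{Spr}$ is obtained by restricting the $k[W]$-action on $\mathsf{HC}$ via $i_X^{*}$, and faithfulness is checked on the stalk at the cone point $0\in X$ by invoking Lemma~\ref{naive identification} and the preceding lemma to identify the induced action on $H^2(\pi^{-1}(0))\cong H^2(Y)$ with the tautologically faithful Galois action on $B_Y$. You spell out the compatibility check between the intermediate-extension action and the deformation-theoretic action $\rho$ a bit more explicitly than the paper does, but the substance is identical.
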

\begin{proof}
The second assertion follows immediately from the action of $W$ on $\mathsf{Spr}$ induced by the restriction of the action on $\mathsf{HC}$, since the cohomology in question is just the stalk of $\mathsf{Spr}$ at $x\in X$. For the first part, note that it is enough to check that the induced action on some stalk cohomologies of $\mathsf{Spr}$ is faithful; but this follows for the stalk at $0 \in X$ by the preceding Lemmas. Indeed, these show that action of $W$ on $H^2(\pi^{-1}(0))$ is just that of $W$ on a generic fiber of $B_Y\rightarrow B_X$, which is faithful by construction.
\end{proof}

\subsection{Stratifications and Thom's $a_f$ Condition for Poisson Deformations}
\label{nearby cycles}
Let $X\subset U \subset \mathbb C^n$ be an analytic subset of $U$ an open set in $\mathbb C^n$, and suppose $f\colon X \to \mathbb C$ is an analytic map, and $x \in f^{-1}(0)$. For $\delta \ll \varepsilon \ll 1$ the restriction of $f$ to $f^{-1}(\dot{D}_\delta\backslash\{0\})\cap B_\varepsilon(x)$ is a topological fibration. Here $\dot{D}_\delta$ is the open disc of radius $\delta$ centered at $0$, and $B_\varepsilon (x)$ denotes the closed ball of radius $\varepsilon$ around $x$ (with respect to some Hermitian metric say). 

This was first proved by Le \cite{Le}, generalizing results of Milnor and others. If we take $f\colon X \to Y$ where $\dim(Y)$ is greater than $1$, the corresponding statement is false.

\begin{example}
Let $f\colon \mathbb C^3 \to \mathbb C^2$ be given by $f(x,y,z) = (x^2-y^2z, y)$. It can be checked that $f$ is flat, however it is not a topological fibration in any ball around $0 \in \mathbb C^3$: indeed it is easy to see that some fibres are connected and some are not. (See the introduction to \cite{Sa} for a detailed discussion of this example.)
\end{example}

However, one can impose a condition, going back to R. Thom, on the map $f$ which ensures that an analogue of Le's result holds. In order to describe this we will need some basic material from stratification theory, as dicussed for example in \cite{Mather}. Recall that a holonomic Poisson variety in the sense of \cite{Ka} is an integral Poisson variety $X$ for which every integral Poisson subscheme $Z$ is generically nondegenerate, that is, generically on $Z$ the Poisson bivector is nondegenerate when restricted to $Z$. It is shown in \cite{Ka} that any symplectic variety is holonomic.

\begin{lemma}
\label{Whitneystrata}
Let $X$ be a symplectic variety. Then the natural stratification of $X$ given by Theorem \ref{Kaledin strata} is a Whitney stratification.
\end{lemma}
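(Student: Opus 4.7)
The plan is to deduce Whitney regularity from Kaledin's local structure theorem for symplectic varieties. More precisely, Kaledin shows in \cite{Ka} that for any point $x$ of a symplectic leaf $S$, the formal completion $\widehat{X}_x$ admits a Poisson decomposition $\widehat{X}_x \cong \widehat{S}_x \times \widehat{W}_{x,0}$, where $W_x$ is a conic symplectic variety (the transverse slice) with distinguished cone point $0$ corresponding to $x$; crucially, this decomposition respects the singularity stratification, so that each stratum $S'$ of $X$ meeting a neighbourhood of $x$ corresponds to a product $(S \cap U) \times T'$ for some stratum $T' \subset W_x$. By standard Artin approximation, the formal decomposition can be upgraded to an \'etale-local one, which suffices for Whitney analysis since Whitney's conditions are invariant under \'etale-local biholomorphisms.

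Given the local product structure $U \cong (S \cap U) \times W_x$, I would verify Whitney's conditions (a) and (b) between a stratum $S$ and a larger stratum $S' \supset S$ at $x$ by a direct computation. Writing $x_n = (s_n', t_n) \in S'$ with $t_n \to 0$ in $T'$, $y_n = (s_n, 0) \in S$, and $T_{x_n}S' = T_{s_n'}S \oplus T_{t_n}T'$, the smoothness of $S$ gives $T_{s_n'}S \to T_s S$, so any limit tangent plane has the form $T = T_s S \oplus T^W$ with $T^W = \lim T_{t_n}T'$. Condition (a) is then immediate, while condition (b) reduces (via the explicit form of the secant direction $(s_n'-s_n,\,t_n)/\|(s_n'-s_n,\,t_n)\|$) to the assertion $\lim t_n/\|t_n\| \in T^W$, i.e.\ to Whitney's condition (b) for the pair $(\{0\}, T')$ inside $W_x$. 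The problem is thereby reduced to verifying Whitney regularity at the cone point of $W_x$.

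I would close the argument by induction on $\dim X$: when $\dim S > 0$, the slice $W_x$ has strictly smaller dimension than $X$ and its Whitney regularity at $0$ follows from the inductive hypothesis. For the remaining case, in which $S = \{x\}$ is isolated and $W_x = X$ itself, I would exploit the conic $\mathbb{G}_m$-action on $W_x$ guaranteed by the conicity of the slice: $\mathbb{G}_m$-equivariance transports tangent planes to $T'$ at generic points to tangent planes at points $t_n \to 0$, and carries secant directions accordingly; smoothness of $T'$ together with compactness of the space of limit directions then force the desired containment $\lim t_n/\|t_n\| \in T^W$.

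The main obstacle is the technical upgrade from Kaledin's formal slice decomposition to an \'etale- or analytic-local statement suitable for Whitney analysis; while standard, this requires careful use of Artin approximation in the Poisson-equivariant setting to ensure the slice decomposition is compatible with the singularity stratification at the \'etale level. A secondary delicacy is the $\mathbb{G}_m$-argument at isolated cone points, where one must verify uniform control on how tangent planes and secant directions behave under the scaling action in order to extract Whitney regularity at the origin from generic smoothness along $T'$.
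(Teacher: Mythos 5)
Your approach is genuinely different from the paper's, and considerably heavier. The paper's argument is short and soft: it invokes the general fact from stratification theory that, for a pair of adjacent strata, the locus in the smaller stratum where Whitney's conditions (a) or (b) fail is an analytic subset of strictly smaller dimension; it then observes that Hamiltonian flows on $X$ are local biholomorphisms preserving the canonical (singularity) stratification, so they carry Whitney-irregular points to Whitney-irregular points, whence the irregular locus is a union of symplectic leaves inside the connected leaf $X_j^0$; the dimension bound then forces this locus to be empty. No local product structure, slice, induction, or $\mathbb{G}_m$-action is needed.

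Your route has a real gap precisely where you suspect it, and you understate its seriousness by calling it ``standard.'' Kaledin's decomposition $\widehat{X}_x \cong \widehat{S}_x \times \widehat{W}_{x,0}$ is a statement about formal completions, while Whitney's conditions concern limits of tangent planes and secant directions in the Euclidean topology and are not formal invariants. To invoke Artin approximation one must first produce a finite-type (or analytic) model of the slice $W_x$---which requires algebraicizing via the conic $\mathbb{G}_m$-action---and then check that the resulting \'etale-local identification carries strata to strata, a Poisson-equivariance statement that is not supplied by plain approximation. This is the genuine technical core of your strategy and would need to be proved, not cited. A secondary concern: your $\mathbb{G}_m$-argument at the cone point implicitly treats the secant line $\overline{0,t_n}$ as if it were the tangent to the $\mathbb{G}_m$-orbit through $t_n$, but for a conical action with unequal positive weights $d_i$ these two lines are different ($\mathbb{C}\cdot(v_1,\ldots,v_n)$ versus $\mathbb{C}\cdot(d_1 v_1,\ldots,d_n v_n)$), and rescaling to a fixed sphere is also not metric-compatible. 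The base case therefore needs a more careful argument than the one sketched. All of this is plausibly repairable, but it amounts to reproving a substantial amount of local Poisson geometry; the Hamiltonian-flow argument in the paper sidesteps it entirely.
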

\begin{proof}
To see this we use the basic fact which establishes the existence of Whitney stratifications: given any analytic variety $X$ with a partition $X = \bigsqcup_{i \in I} X_i^0$, and strata $X^0_i,X^0_j$ satisfying $X^0_j \subset X_i = \overline{X^0_i}$, the set of points $x \in X^0_j$ which do not satisfy the Whitney $a$ and $b$ condition is an analytic set of dimension strictly smaller than $X^0_j$. We call such points irregular. 

In the case where $X$ is a symplectic variety, the strata $\{X_i^0: i \in I\}$ of Theorem \ref{Kaledin strata} are precisely the symplectic leaves of $X$, and can be described explicitly as follows: the open leaf $X_0$ gives the regular locus, then if $\Sigma$ is the singular locus we take the connected components of the smooth locus of $\Sigma$, and so on. This follows from the fact that a smooth holonomic variety is in fact symplectic (see Lemma $1.4$ in \cite{Ka}). To verify the Whitney conditions, note that if $x \in X_j^0\subset X_i$ $x$ is a point of irregularity for a pair of strata $X_i^0,X_j^0$, then its orbit under any Hamiltonian flow (which preserves both $X_i^0$ and $X_j^0$) will consist entirely of points of irregularity. It follows that the set of points of irregularity is a union of symplectic leaves in $X$. But then if the irregular locus is non-empty it must be all of $X_j^0$, contradicting the fact that it has smaller dimension than $\dim(X_j^0)$. 
\end{proof}

\begin{remark}
The iterative procedure yielding the symplectic leaves in the case of a holonomic variety always produces a partition of a variety into smooth locally closed subvarieties, but it does not normally yield a Whitney stratification.
\end{remark}

\begin{definition}
Let $M$ be a connected smooth complex analytic variety of dimension $n$ and suppose that $f\colon M \to U$ is an analytic map, where $U \subset \mathbb C^r$ is a neighborhood of $0 \in \mathbb C^r$. Let $U^{\text{reg}}$ be the set of regular values of $f$ and $U^{\text{sing}}$ the set of non-regular values. We assume that $U^{\text{sing}}$ is a proper analytic subvariety of $U$. Let $M^{0}= f^{-1}(U^{\text{reg}})$, a smooth submanifold of $M$. Suppose that $E =f^{-1}(0)$ is a equipped with a Whitney stratification $\mathcal S$. We will say that $(f,M,\mathcal S)$ satisfies Thom's $A_f$ condition if for each $S\in \mathcal S$, whenever a sequence of points $x_i \in M^0$ converging to $y \in S$, is such that the limit 
\[
T = \lim_{i \to \infty} T_{x_i}(f^{-1}(f(x_i)) \subset T_yM,
\]
exists, we have $T_yS \subseteq T$. 
\end{definition}

When $(f,M,S)$ satisfies the $A_f$ condition, then it has been shown by Grinberg that it makes sense to talk about the nearby cycles sheaf of $f$. Indeed we have the following proposition.

\begin{prop}
(\cite[Proposition 2.4]{Gr})
If $(f,M,S)$ satisfies the $A_f$ condition there is a well-defined sheaf $P_f$ of nearby cycles of $f$ supported on $E$ which is constructible with respect to the stratification $\mathcal S$. If $B_\epsilon$ is a small ball around $0 \in U$, then $\pi_1(U^{\text{reg}})\cap B_\epsilon$ acts on $P_f$ by monodromy. 
\end{prop}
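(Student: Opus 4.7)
Since the result is attributed to Grinberg, my plan is to outline the structure of his argument, which breaks into three parts: establishing a stratified local triviality of $f$ from Thom's $A_f$ condition, using this triviality to construct the complex $P_f$, and extracting the monodromy action from the local-system structure on nearby fibers.

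First I would apply a version of Thom's second isotopy theorem. The $A_f$ condition, together with the Whitney regularity of $\mathcal{S}$, is precisely what is needed to guarantee that over a sufficiently small ball $B_\epsilon \subset U$, the map $f$ admits a stratified local trivialization: its restriction to $M^0 \cap f^{-1}(B_\epsilon)$ is a topological fiber bundle over $U^{\text{reg}} \cap B_\epsilon$, and this trivialization extends across $E$ to a stratified fibration compatible with $\mathcal{S}$, in the sense that any point $y$ in a stratum $S \in \mathcal{S}$ admits a neighborhood in $M$ over which $f$ looks like a product with $S$.

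Next I would construct $P_f$ via the universal-cover recipe. Let $U_\epsilon^{\text{reg}} := U^{\text{reg}} \cap B_\epsilon$, let $p: \widetilde{U_\epsilon^{\text{reg}}} \to U_\epsilon^{\text{reg}}$ denote the universal cover, form the fiber product $\widetilde{M}^0 := \big(M^0 \cap f^{-1}(B_\epsilon)\big) \times_{U_\epsilon^{\text{reg}}} \widetilde{U_\epsilon^{\text{reg}}}$, and let $\tilde{\jmath}: \widetilde{M}^0 \to M$ be the natural map. Writing $i: E \hookrightarrow M$ for the inclusion, define
\begin{displaymath}
P_f := i^* R\tilde{\jmath}_* \big(k_{\widetilde{M}^0}\big).
\end{displaymath}
Constructibility of $P_f$ with respect to $\mathcal{S}$ then follows from the stratified triviality established in step one, which identifies the stalks of $R\tilde{\jmath}_* k_{\widetilde{M}^0}$ at any two points of a common stratum. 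The monodromy action of $\pi_1(U_\epsilon^{\text{reg}})$ on $P_f$ arises automatically from its action by deck transformations on $\widetilde{U_\epsilon^{\text{reg}}}$, which lifts to an action on $\widetilde{M}^0$ commuting with $\tilde{\jmath}$, and hence descends to an action on $P_f$.

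The main obstacle will be the first step: verifying that the bare $A_f$ condition, together with Whitney regularity of $\mathcal{S}$, genuinely suffices to invoke Thom's isotopy theorem in this setting. This is a geometric assertion about tangent spaces to fibers of $f$ and their compatibility in the limit with the tangent spaces to strata of $\mathcal{S}$, and it lies at the technical heart of Thom's original argument as adapted by Grinberg. Once this stratified triviality is in hand, the sheaf-theoretic construction of $P_f$ and the extraction of the monodromy are essentially formal, following the template familiar from the one-dimensional nearby cycles formalism.
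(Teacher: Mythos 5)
The paper does not prove this proposition: it is quoted verbatim from Grinberg and the citation \cite[Proposition 2.4]{Gr} is the entirety of the justification, so there is no internal argument to compare against. The most relevant nearby text is the sentence immediately following the statement, which says that $P_f$ ``can be computed by pulling back along an analytic arc in $U$ and forming the usual one-dimensional nearby cycles sheaf''; that remark indicates a different construction than the one you propose, and the discrepancy points to a genuine gap.

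Your universal-cover (``Deligne-style'') definition $P_f := i^*R\tilde{\jmath}_*k_{\widetilde{M}^0}$ does not, in general, compute the nearby cycles in the multi-parameter setting. In the one-dimensional case this recipe recovers the Milnor fiber cohomology precisely because the universal cover $\widetilde{\mathbb{D}^*}$ is contractible. In your setting, the stratified local triviality you invoke makes $M^0\cap B_\delta(y)\cap f^{-1}(B_{\delta'})$ a fiber bundle over $U^{\text{reg}}\cap B_{\delta'}$ with fiber the local Milnor fiber $F_y$; pulling back along the universal cover of $U^{\text{reg}}_\epsilon$ trivializes the monodromy but leaves a base with its own topology, so the Leray spectral sequence shows the stalk of your $P_f$ at $y$ mixes $H^*(F_y)$ with $H^*\bigl(\widetilde{U^{\text{reg}}_\epsilon}\bigr)$. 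This equals $H^*(F_y)$ only when $U^{\text{reg}}_\epsilon$ is aspherical, which is \emph{not} implied by the Thom $A_f$ condition (it does hold in the paper's applications, where $U^{\text{reg}}$ is a reflection-arrangement complement and Deligne's $K(\pi,1)$ theorem applies, but Grinberg's proposition is stated in full generality). The construction that does work, following Sabbah's morphisms sans \'eclatement and Grinberg, is the one the paper gestures at: restrict $f$ along a generic analytic arc $\gamma\colon(\mathbb{D},0)\to(U,0)$ transverse to $U^{\text{sing}}$ and take the ordinary one-dimensional nearby cycles $\psi_{\gamma^*f}$; the $A_f$ condition is what guarantees independence of the arc and constructibility along $\mathcal S$, and the full $\pi_1(U^{\text{reg}}\cap B_\epsilon)$-monodromy then comes from the local triviality of the Milnor fibration over all of $U^{\text{reg}}\cap B_\epsilon$, not from deck transformations of a universal cover. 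Your first step (extracting stratified triviality from $A_f$ plus Whitney regularity) is in the right spirit, but the construction in the second step must be replaced.
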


The sheaf $P_f$ can be computed by pulling back along an analytic arc in $U$ and forming the usual one-dimensional nearby cycles sheaf. 
\begin{example}
In the case of $T^*\mathcal B$, the resolution of the nilpotent cone, the symplectic deformation space $H^2(T^*\mathcal B, \mathbb C)$ can be identified with the universal Cartan subalgebra $\mathfrak h$, and $W$ is the (usual) Weyl group of the algebraic group $G$. The stratification is then by the root hyperplanes, and the stratification of $\mathfrak h/W$ given by the corresponding orbits.
\end{example}

We are now ready to give our second construction of the Springer sheaf along with the action of the symplectic Galois group $W$. We keep the notation of Diagram \eqref{main diagram} and of Theorem \ref{Namikawa discriminant}. Set $U = (H^2(Y,\mathbb C) \backslash \mathcal D)/W$.

\begin{lemma}
The triple $(\theta_X,\mathcal X,\mathcal S)$ satisfies the $A_f$-condition for the morphism $\theta_X\colon \mathcal X' \to H^2(Y,\mathbb C)/W$.
\end{lemma}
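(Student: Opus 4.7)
The plan is to exploit the Poisson geometry of the family $\theta_X \colon \mathcal{X} \to B_X$. The central observation is that, as a Poisson deformation, the fibres of $\theta_X$ are cut out by Casimir functions: by construction the pullback $\theta_X^{*}\mathcal{O}_{B_X}$ coincides with the Poisson centre of $\mathcal{O}_{\mathcal{X}}$, so every Hamiltonian vector field on $\mathcal{X}$ is tangent to every fibre of $\theta_X$. In particular $\xi_H(p) \in T_p(\theta_X^{-1}(\theta_X(p)))$ whenever $p$ is a smooth point of its fibre.

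By Lemma \ref{Whitneystrata}, the central fibre $X = \theta_X^{-1}(0)$ carries the Whitney stratification $\mathcal{S}$ by its symplectic leaves. Fix a stratum $S \in \mathcal{S}$, a point $y \in S$, and a sequence $x_i \in \theta_X^{-1}(U^{\text{reg}})$ converging to $y$ for which the limit $T = \lim_i T_{x_i}(\theta_X^{-1}(\theta_X(x_i)))$ exists in the appropriate Grassmannian. To verify the $A_f$ condition it suffices to show $T_y S \subseteq T$. Given $v \in T_y S$, the very definition of a symplectic leaf identifies $T_y S$ with the image of the Poisson bivector at $y$, i.e.\ with the span of the values $\xi_H(y)$ of Hamiltonian vector fields at $y$; since $\mathcal{X}$ is affine we may realise such an $H$ as a global regular function with $\xi_H(y)=v$. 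Then $\xi_H(x_i) \in T_{x_i}(\theta_X^{-1}(\theta_X(x_i)))$ for every $i$, and by continuity of $\xi_H$ as a derivation on $\mathcal{X}$ we have $\xi_H(x_i) \to \xi_H(y) = v$; hence $v \in T$.

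The principal subtlety to address is that $\mathcal{X}$ itself is singular, so one must be careful how tangent spaces and their limits are interpreted. However, $\theta_X$ is smooth over $U^{\text{reg}}$, so the open set $\theta_X^{-1}(U^{\text{reg}})$ is contained in the smooth locus of $\mathcal{X}$: the spaces $T_{x_i}(\theta_X^{-1}(\theta_X(x_i)))$ are thus genuine subspaces of the tangent space of $\mathcal{X}$ at $x_i$ (after a local analytic embedding into affine space), and the limit $T$ is a subspace of the Zariski tangent space of $\mathcal{X}$ at $y$. The Hamiltonian vector field $\xi_H$ is defined globally as a derivation of $\mathcal{O}_{\mathcal{X}}$, since the Poisson bivector extends from $\mathcal{X}^{\text{reg}}$ across the singular locus by normality, and is continuous in this ambient sense, so the limit step goes through unchanged. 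This is the only point that requires care, and it introduces no fundamentally new ideas.
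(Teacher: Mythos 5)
Your argument is essentially the paper's: you identify $T_yS$ with the span of Hamiltonian vector fields at $y$, use that Hamiltonian flows are tangent to every fibre of $\theta_X$ (the fibres being Poisson subschemes), and pass to the limit by continuity of the globally extended Hamiltonian derivation. One small imprecision: you only need $\theta_X^{*}\mathcal{O}_{B_X}$ to lie \emph{in} the Poisson centre (fibres cut out by Casimirs), not to equal it, and that inclusion is the direction you actually use; your added remark about how to interpret limiting tangent spaces at singular points of $\mathcal{X}$ is a sensible supplement that the paper leaves implicit.
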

\begin{proof}
The claim about $\mathcal U$ follows from \cite{N3}, see Theorem \ref{Namikawa discriminant}. If $x \in S$ is a point in a symplectic leaf, and $\eta \in T_xS$, then since $S$ is an integral Poisson subscheme, we may write the derivation given by $\eta$ via the Poisson structure as $g \mapsto \{f_\eta,g\}(x)$ for some function $f_\eta$. Now picking an extension of this function to a function $\tilde{f}_\eta$ on the variety $\mathcal Y$ we see that if $(y_n)$ is a sequence of points in $\mathcal U$ tending to $x$, since the fibre of $\tilde{\pi}$ at each $y_n$ is smooth symplectic, the derivation $\eta_n = \{\tilde{f}_\eta,\cdot\}(y_n)$ is tangent to $\tilde{\pi}^{-1}(y_n)$ and clearly 
\[
\lim_{n \to \infty} \eta_n = \eta,
\]
so that the tangent space $T_x S$ lies in the limit of the tangent spaces of the fibres of $\tilde{\pi}$ as required.
\end{proof}

The previous Lemma shows that $\theta_X$ is a morphism without blow-up, so we may define a nearby cycles sheaf $\mathcal P = \psi_{\theta_X}(\mathbb C_{\mathcal X'})$ carrying an action of the fundamental group of $U= (H^2(Y,\mathbb C)\backslash \mathcal D)/W$ where $\mathcal D$ denotes the discriminant of the map $\theta_Y$.

\begin{theorem}\label{nearby cycles2}
 We have $\mathcal P \cong \pi_!(\mathbb C_{Y})$ and the action of the braid group $\pi_1(U)$ factors through the symplectic Galois group $W$. 
\end{theorem}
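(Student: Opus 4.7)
The theorem has two parts: the identification $\mathcal{P}\cong \pi_!(\mathbb{C}_Y)$, and the factorization of the $\pi_1(U)$-action through the symplectic Galois group $W$. The main technical tool throughout is the compatibility of nearby cycles with proper pushforward in the higher-dimensional base setting, which is available here precisely because the $A_f$-condition has been verified in the preceding Lemma.

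For the identification, I would apply this compatibility to the proper birational map $\tilde{\pi}\colon \mathcal{Y}\to \mathcal{X}$ (or, more directly, $\Pi\colon \mathcal{Y}\to \mathcal{X}'$), yielding
\[
\psi_{\theta_X}\bigl(\tilde{\pi}_*\mathbb{C}_{\mathcal{Y}}\bigr) \;\cong\; \tilde{\pi}_*\,\psi_{\theta_X\circ \tilde{\pi}}\bigl(\mathbb{C}_{\mathcal{Y}}\bigr).
\]
Since $\theta_X\circ \tilde{\pi} = q\circ \theta_Y$ and $\theta_Y\colon \mathcal Y \to B_Y$ is a topologically trivial fibration (from Kaledin--Verbitsky together with the construction behind Proposition~\ref{conic universality}), we have $\psi_{\theta_Y}(\mathbb{C}_{\mathcal{Y}})\cong \mathbb{C}_Y$; combined with $q^{-1}(0)=\{0\}$, this forces $\psi_{q\circ \theta_Y}(\mathbb{C}_{\mathcal{Y}})\cong \mathbb{C}_Y$, carrying a natural $\pi_1(U)$-monodromy inherited from the $W$-cover $q$. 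Restricting $\tilde\pi$ to the central fibre gives $\pi\colon Y\to X$, so the right-hand side produces $\pi_!(\mathbb{C}_Y)$. To match this with $\mathcal{P}=\psi_{\theta_X}(\mathbb{C}_{\mathcal{X}'})$, I would use the smallness of $\Pi$ (Remark~\ref{smallness of conic deformations}) and the fact that $\Pi_*\mathbb{C}_{\mathcal{Y}}$ agrees with $\mathbb{C}_{\mathcal{X}'}$ on the smooth locus of $\mathcal{X}'$.

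For the factorization, the $\pi_1(U)$-action on $\mathcal{P}$, transported through the identification above, is precisely monodromy through the $W$-cover $q$: any loop in $U=(H^2(Y,\mathbb{C})\setminus \mathcal{D})/W$ lifts to a path in $B_Y$ ending at a $W$-translate of its starting point, and the associated monodromy permutes the $|W|$ sheets of the nearby fibre by the corresponding element of $W$. By naturality of the identification in the first part, this $W$-action transfers to $\pi_!(\mathbb{C}_Y)$. The hardest step is the careful verification of the nearby-cycles/proper-pushforward compatibility in the higher-dimensional base (and the subsequent identification of $\mathcal{P}$ with the output of the resulting computation); once these are in place, both claims follow essentially formally from the Galois structure of $q$ and the topological triviality of $\theta_Y$.
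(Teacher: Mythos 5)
Your proposal follows essentially the same strategy as the paper: push the constant sheaf through $\tilde{\pi}$, use the compatibility of nearby cycles with proper pushforward, use the topological triviality of $\theta_Y$ to identify the upstairs nearby cycles with $\mathbb{C}_Y$, and observe that $\tilde{\pi}_!\mathbb{C}_{\mathcal{Y}}$ agrees with the constant sheaf on $\mathcal{X}'$ away from the discriminant (so the two sheaves have the same nearby cycles). One technical difference: the paper first reduces to the one-dimensional case by choosing a generic line $L\subset H^2(Y,\mathbb{C})$ meeting the discriminant only at the origin, and then applies the standard one-variable nearby-cycles/proper-pushforward compatibility; you instead invoke the higher-base compatibility directly (which is fine given Grinberg's setup once the $A_f$ condition is in hand, and is in fact how $\mathcal{P}$ itself is defined). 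Both work; the paper's 1D reduction makes the ingredients more elementary, while yours avoids the explicit choice of slice.

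One small caveat: in your final matching step you invoke ``smallness of $\Pi$,'' but smallness is not what is needed there. Smallness would tell you $\Pi_!\mathbb{C}[\dim]$ is an intermediate extension; the actual ingredient is simply that $\Pi$ (equivalently $\tilde{\pi}$) restricts to an isomorphism over the preimage of $B_X^{\mathrm{reg}}$, so that $\Pi_*\mathbb{C}_{\mathcal{Y}}$ and $\mathbb{C}_{\mathcal{X}'}$ agree over the open locus from which nearby cycles is computed. This is what the paper uses, and your phrase ``agrees with $\mathbb{C}_{\mathcal{X}'}$ on the smooth locus'' already captures it; the appeal to smallness is a red herring. With that adjustment the argument is complete and matches the paper's.
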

\begin{proof}
Let us first see that $\mathcal  P$ is isomorphic to $\pi_!(\mathbb C_Y)$: pick a line $L\subseteq H^2(Y,\mathbb C)$ which intersects $\mathcal D$ only at $0 \in H^2(Y,\mathbb C)$ (this is possible since $\mathcal D$ is a finite union of hyperplanes) and let $C = q(L)$ where $q\colon H^2(Y,\mathbb C)\to H^2(Y,\mathbb C)/W$ is the quotient map.  We may construct $\mathcal P$ by forming the (one-dimensional) nearby cycles for the restriction $\theta_C
\colon \mathcal X_C \to C$ of the map $\theta_X$ to $\theta_X^{-1}(C)$. Let $\theta_L\colon \mathcal Y_L \to L$ denote the restriction of $\theta_Y$ to $L$, so that we have a commutative diagram

\xymatrix{
& & & & & \mathcal Y_L \ar[d]_{\theta_L} \ar[r]^{\tilde{\pi}} & \mathcal X_C \ar[d]^{\theta_C} \\
& & & & & L \ar[r]^{q_{|L}} & C.
}

Since the map $\tilde{\pi}$ is proper, the nearby cycles construction commutes with push-forward by $\tilde{\pi}$, that is,
\[
\psi_{\theta_C}(\tilde{\pi}_!(\mathbb C_{\mathcal Y})_{|\theta_Y^{-1}(L)}) \cong \pi_!(\psi_{L}(\mathbb C_{\theta_Y^{-1}(L)})). 
\] 
Now it is known that the map $\mathcal Y \to H^2(Y,\mathbb C)$ is a topologically trivial fibre bundle, hence $\psi_g(\mathbb C_{f^{-1}(L)}) \cong \mathbb C_Y$. On the other hand, since $\tilde{\pi}$ is an isomorphism over $\mathcal U = \mathcal X'\backslash f^{-1}(\mathcal D)$ it follows $\tilde{\pi}_!(\mathbb C_{\mathcal Y})_{|f^{-1}(L\backslash\{0\}} \cong \mathbb C_{\theta_X^{-1}(C\backslash\{0\})}$ and so $\psi_{\theta_C}(\tilde{\pi}_!(\mathbb C_{\mathcal Y})) \cong \mathcal P$. It follows that $\mathcal P = \pi_!(\mathbb C_{Y})$ as claimed. Moreover the topological triviality of the fibre bundle $\theta_Y$ shows that the action of the fundamental group on $\mathcal P$ must factor through $W$ as required.
\end{proof}

\begin{cor}
The Springer sheaf $\mathsf{Spr} = \pi_!(\mathbb C)$ is constructible with respect to the strafication of $X$ by its symplectic leaves.
\end{cor}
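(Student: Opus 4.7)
The plan is to derive this corollary directly from Theorem \ref{nearby cycles2}, which provides the identification $\mathsf{Spr} = \pi_!(\mathbb{C}_Y) \cong \mathcal{P}$, combined with the constructibility properties of nearby cycles for morphisms satisfying the $A_f$ condition. Since constructibility is unaffected by the perverse shift $[\operatorname{dim}(Y)]$, it suffices to prove the statement for $\pi_!(\mathbb{C}_Y)$ itself.

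First I would record that, by the lemma preceding Theorem \ref{nearby cycles2}, the morphism $\theta_X\colon \mathcal{X}' \to H^2(Y,\mathbb{C})/W$ satisfies Thom's $A_f$ condition relative to the Whitney stratification $\mathcal{S}$ of its central fibre $X$. Applying Grinberg's result \cite[Proposition 2.4]{Gr}, the nearby cycles sheaf $\mathcal{P} = \psi_{\theta_X}(\mathbb{C}_{\mathcal{X}'})$ is then automatically supported on $X$ and constructible with respect to $\mathcal{S}$.

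Next I would identify $\mathcal{S}$ with the stratification by symplectic leaves. This is the content of Kaledin's Theorem \ref{Kaledin strata} together with Lemma \ref{Whitneystrata}: the natural singularity stratification $X \supset \Sigma \supset \cdots$ is a Whitney stratification whose locally closed strata are precisely the symplectic leaves of $X$. This is indeed the stratification used in the $A_f$ verification, since the argument there produced tangent vectors to the leaves from Hamiltonian flows, which preserve each stratum. Combining these points, $\mathcal{P}$, and hence $\mathsf{Spr}$, is constructible with respect to the stratification of $X$ by symplectic leaves.

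I do not anticipate any serious obstacle: the corollary is essentially a bookkeeping consequence of results already assembled in Section \ref{nearby cycles}. The only potentially subtle point is ensuring that the Whitney stratification of $X$ appearing in the $A_f$ setup genuinely coincides with the symplectic leaf stratification rather than some refinement of it; but this is guaranteed by Kaledin's theorem, which characterises the leaves as the strata of the singularity filtration, so no further work is required.
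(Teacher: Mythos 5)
Your argument is correct and matches the paper's proof: both deduce the result from the isomorphism $\mathsf{Spr}\cong\mathcal P$ of Theorem \ref{nearby cycles2} together with the fact (via \cite[Proposition 2.4]{Gr}) that the nearby cycles sheaf is constructible with respect to the Whitney stratification entering the $A_f$ verification, which by Lemma \ref{Whitneystrata} is exactly the symplectic-leaf stratification. Your extra remark confirming that the stratification used in the $A_f$ setup is the leaf stratification rather than a refinement is a sensible sanity check, but it is the same reasoning, just spelled out more explicitly.
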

\begin{proof}
This follows immediately from the isomorphism $\mathsf{Spr}\cong \mathcal P$ and the general fact that the nearby cycles sheaf is constructible with respect to a Whitney stratification satisfying the $A_f$-condition.
\end{proof}

\begin{remark}
The previous Corollary shows that the cohomologies of the fibres of the symplectic resolution are locally constant on the symplectic leaves.
\end{remark}

Finally, we show that the monodromy action of the symplectic Galois group given by the nearby cycles construction is the same as that given by taking the intermediate extension.
\begin{prop}\label{monodromy}
The monodromy action of $W$ on $\pi_!(\mathbb{C}_Y)$ induced by the isomorphism $\mathcal P \cong \pi_!(\mathbb C)$ coincides with the action of $W$ induced by the intermediate extension.
\end{prop}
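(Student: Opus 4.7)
The strategy is to realise both $W$-actions on $\pi_!(\mathbb{C}_Y) = \mathsf{Spr}$ as arising from a single \emph{master} $W$-action on $\widetilde{\pi}_!\mathbb{C}_\mathcal{Y}$ over $\mathcal{X}$, applied through the two natural functors $i_X^*$ and $\psi_{\theta_X}$ respectively. The isomorphism $\mathcal{P}\cong\pi_!(\mathbb{C}_Y)$ of Theorem \ref{nearby cycles2} is built by composing natural transformations, so it will automatically identify the two resulting actions on $\mathsf{Spr}$.

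Concretely, over $\mathcal{X}^{\text{reg}} = \theta_X^{-1}(B_X^{\text{reg}})$ one has a Cartesian identification
\[
\widetilde{\pi}^{-1}(\mathcal{X}^{\text{reg}}) \cong \mathcal{X}^{\text{reg}}\times_{B_X^{\text{reg}}} B_Y^{\text{reg}},
\]
exhibiting $\widetilde{\pi}$ there as the pull-back along $\theta_X$ of the unramified $W$-Galois cover $q$; hence $\widetilde{\pi}_!\mathbb{C}|_{\mathcal{X}^{\text{reg}}}\cong \theta_X^*(q_!\mathbb{C}_{B_Y^{\text{reg}}})$ carries a canonical $W$-action by deck transformations. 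By the smallness of $\widetilde{\pi}$ (Proposition \ref{smallness}) and functoriality of intermediate extension, this extends uniquely to a $W$-action on $\mathsf{HC} = \widetilde{\pi}_!\mathbb{C}_\mathcal{Y}[\dim\mathcal{Y}]$, realising the isomorphism $\mathbb{C}[W]\cong\operatorname{End}(\mathsf{HC})$ of Theorem \ref{Springer properties}(3). Applying $i_X^*$ and proper base change produces the intermediate-extension action on $\mathsf{Spr}$; applying $\psi_{\theta_X}$ produces a $W$-action on $\mathcal{P}$.

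The crucial step is to check that this $\psi_{\theta_X}$-induced action on $\mathcal{P}$ coincides, under the $W$-identification prescribed by Theorem \ref{Springer properties}(3), with the $\pi_1(U)$-monodromy action described in Theorem \ref{nearby cycles2}. Because $\widetilde{\pi}_!\mathbb{C}|_{\mathcal{X}^{\text{reg}}}$ is pulled back from $B_X^{\text{reg}}$, naturality of nearby cycles reduces the problem to the analogous statement for $q_!\mathbb{C}_{B_Y^{\text{reg}}}$ on $B_X^{\text{reg}}$: for the pushforward along an unramified Galois cover, the canonical monodromy $\pi_1(B_X^{\text{reg}})\twoheadrightarrow W$ implements the same $W$-action (up to the standard identification) as the deck automorphisms on each stalk. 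I would verify this by reducing to one-dimensional nearby cycles along a line $L\subset B_Y$ chosen so that $W\cdot L \to q(L)$ is a Galois branched cover of curves, exactly as in the proof of Theorem \ref{nearby cycles2}; the one-dimensional case is a standard computation identifying monodromy around $0$ with the corresponding deck transformation.

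Granted this compatibility, the isomorphism $\mathcal{P}\cong\pi_!(\mathbb{C}_Y)$ is obtained by composing the (natural) proper base change for $\widetilde{\pi}$ with the (natural) isomorphism $\psi_{\theta_L}(\mathbb{C}_{\mathcal{Y}_L})\cong \mathbb{C}_Y$ coming from the topological triviality of $\theta_L$, each step being $W$-equivariant for the master action. The composite therefore intertwines the nearby-cycles action with the intermediate-extension action on $\pi_!(\mathbb{C}_Y)$, completing the proof. The main obstacle is the deck-versus-monodromy identification in the previous paragraph, where one must disentangle the two commuting $W$-actions on the regular-representation local system $q_!\mathbb{C}_{B_Y^{\text{reg}}}$ and confirm that, after passing to nearby cycles, they literally agree (rather than being merely isomorphic); the one-dimensional reduction supplies the needed verification.
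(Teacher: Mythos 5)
Your proposal takes a genuinely different route from the paper. The paper's proof is a ``local'' argument: it reduces to checking that the two $W$-actions agree on the stalk of $\mathsf{Spr}$ at the cone point $0\in X$ (justified by the fact that every perverse constituent of $\mathsf{Spr}$ has nonvanishing stalk there, so restriction-to-stalk is injective on $\operatorname{End}(\mathsf{Spr})$), and then invokes Lemma \ref{naive identification}, which identifies the intermediate-extension action on $\mathsf{Spr}_0 \cong H^*(Y)$ with the explicit action $\rho$ coming from the $\mathbb G_m$-contraction and the topological triviality of $\theta_Y$; the comparison of $\rho$ with the nearby-cycles monodromy is then done directly on $H^*(Y)$. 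You instead attempt a ``global'' argument: realise both actions as images of a single $W$-action on $\mathsf{HC}$ under the two functors $i_X^*$ and $\psi_{\theta_X}$, and argue the connecting isomorphism $\mathcal P\cong\pi_!\mathbb C_Y$ is built from natural transformations. This is a reasonable alternative framing, and it correctly isolates the crucial step.

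However, there is a gap in how you propose to close that crucial step, and it is worth naming precisely. First, the reduction step ``because $\widetilde{\pi}_!\mathbb C|_{\mathcal X^{\mathrm{reg}}}$ is pulled back from $B_X^{\mathrm{reg}}$, naturality of nearby cycles reduces the problem to $q_!\mathbb C_{B_Y^{\mathrm{reg}}}$ on $B_X^{\mathrm{reg}}$'' is not available as stated: the nearby-cycles functor does \emph{not} commute with pull-back along $\theta_X$, because the Milnor fibration of $\theta_X$ carries nontrivial topology (it contracts to the singular special fibre $X$) that is invisible in the base. There is no nearby-cycles sheaf living on $B_X^{\mathrm{reg}}$ to which one could reduce by ``naturality.'' Second, naturality of the monodromy transformation $T\colon\psi_{\theta_X}\Rightarrow\psi_{\theta_X}$ only tells you that the deck-transformation action $\psi_{\theta_X}(\phi_w)$ \emph{commutes} with $T$; it does not say they are equal. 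Indeed, on the stalks of $q_!\mathbb C$ over $B_X^{\mathrm{reg}}$ the deck transformations and the image of $\pi_1(B_X^{\mathrm{reg}})$ give the two opposite regular actions on $k[W]$, which for non-abelian $W$ are genuinely distinct automorphisms (isomorphic modules, differing by $w\mapsto w^{-1}$). Your closing paragraph acknowledges the need to ``disentangle the two commuting $W$-actions,'' but the one-dimensional reduction you propose is the step used in the paper to \emph{identify} $\mathcal P$ with $\pi_!\mathbb C_Y$, not to compare the two $W$-actions, so it does not automatically supply the verification. The paper avoids this entanglement by evaluating both actions on the concrete space $H^*(Y)$, where the contraction via the conical $\mathbb G_m$-action and the topological triviality of $\theta_Y$ let one see directly that both produce the ``naive'' $\rho$-action; you would need an analogous concrete computation somewhere to replace the appeal to naturality.
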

\begin{proof}
First note the action of $W$ on the stalk cohomology of $\pi_!(\mathbb C_Y)$ at $0 \in X$ is faithful, and moreover, the stalks of the perverse sheaves occuring in the Springer sheaf are all non-vanishing at the cone point. It is therefore enough to check the actions coincide on this stalk cohomology. But it is easy to see that the naive action of $W$ on $H^*(Y) = H^*(L)$ (where $L=\pi^{-}(0)$ as above) defined in Section \ref{Basics} coincides with the monodromy action on $\mathcal P$.
\end{proof}

\subsection{Examples and a Question}\label{sec:ex-conj}
We continue with a discussion of some examples.  For simplicity, we take $k=\mathbb C$.

\begin{example}
The simplest cases of symplectic singularities are the Kleinian singularities. Let $\Gamma$ be a finite subgroup of $\text{SL}_2(\mathbb C)$, and let $S = \mathbb C^2/\Gamma$ be the corresponding quotient singularity, and let $\pi \colon Y\to S$ be the minimal resolution.
Now $\mathsf{Spr} = \pi_!(\mathbb C_Y[2])$ is a semisimple perverse sheaf. The only sheaves which can occur are the constant sheaf (which is perverse in this case because $S$ is rationally smooth) and the skyscraper sheaf supported at the cone point. Using proper base change to compute the cohomology of the fibres, this shows that $\mathsf{Spr} \cong \mathbb C_{S}[2]\oplus \mathbb C_{0}^{\oplus r}$ where $r$ is the number of components in the exceptional fibre. The action of the Weyl group is then via the trivial representation on $\mathbb C_S[2]$ and via the reflection representation on the summand supported on singular point.  

Note this example shows that in general one cannot expect that the map from $\mathbb C[W]$ to $\text{End}(\mathsf{Spr})$ is injective (even thought the map $W\to \text{Aut}(\mathsf{Spr})$ is injective).
\end{example}

\begin{example}\label{ex:Hilb}
Let $S$ be a smooth affine complex algebraic surface with trivial canonical bundle, and let $X= \text{Sym}^n(S)$ be its $n$-th symmetric product. It is well known that if $Y = \text{Hilb}^n(S)$ then the Hilbert-Chow morphism from $f\colon Y\to X$ is a resolution of singularities of $X$, making $Y$ a symplectic resolution, where the symplectic structures are induced by that on $S$. The symmetric product $X$ is naturally stratified into locally closed pieces $X_\nu$ given by partitions $\nu$ of $n$ (the partition measures the multiplicities of points) and these are evidently integral Poisson subvarieties. Moreover, as in \cite{GS} for example, we have $\dim(X_\nu) = 2n-2\ell(\nu)$ where $\ell(\nu)$ is the length of the partition $\nu$ (\textit{i.e.} the number of parts in the partition). It follows that the only codimension $2$ stratum is that labelled by $(2,1^{n-2})$. In order to calculate the symplectic Galois group it is thus enough to calculate a slice through this stratum (and check monodromies). However, in our case since we know it must be a Kleinian singularity, and moreover the description of the fibres of the Hilbert-Chow morphism show that the resolution of the slice has exceptional fibre consisting of a single $\mathbb P^1$ (the punctual Hilbert scheme of two points) the slice must be a type $A_1$ singularity and hence $W=S_2$, the symmetric group on two letters.

Now if $\{\beta_k\}_{k=0}^{4}$ denote the Betti numbers of the surface $S$, it follows easily from the results of \cite{GS} that 
\[
\dim\big(H^2(\text{Hilb}^n(S),\mathbb Q)\big) = \beta_2 + 1 + {\beta_1+1 \choose 2}.
\]
The nontrivial element of the symplectic Galois group $W$ acts as a reflection: indeed the proof of the main theorem in \cite{Na2} shows that as $W$-spaces we have $H^2(X)\cong H^2(Y)\times_{\mathfrak h/W} \mathfrak h$ where $\mathfrak h$ is the deformation base of the $A_1$-minimal resolution, and $H^2(X)$ maps to $\mathfrak h$ by the pull-back from the inclusion of the preimage in $\text{Hilb}^n(S)$ of $X_{(1^n)}\sqcup X_{(2,1^n)}$.

The Springer sheaf in this situation has been studied by Goettsche and Soergel \cite{GS}. 
They show that 
\[
\mathsf{Spr} \cong \bigoplus_{\nu \in \mathcal P_n} IC(\bar{X}_\nu),
\]
where $\mathcal P_n$ denotes the set of partitions of $n$, and $X_\nu$ is the symplectic leaf corresponding to the partition $\nu$. It follows that $\text{End}(\mathsf{Spr}) \cong \mathbb C^{p(n)}$, where $p(n) = |\mathcal P_n|$ is the number of partitions of $n$. Note that this example (for $n>2$) shows that the map $\mathbb C[W] \to \text{End}(\mathsf{Spr})$ need not be surjective. 
\end{example}

In general, the Springer sheaf always decomposes as a sum 
\[
\mathsf{Spr} \cong \bigoplus_{i \in I} IC(S_i,\mathcal{L}_i)\otimes V_i,
\]
where $i\in I$ runs over a finite set of local systems supported on symplectic leaves of $X$, and each $V_i$ is a finite-dimensional vector space. The $V_i$ thus become representations of $W$. 
The following question was suggested to us by Gwyn Bellamy.
\begin{question}
When are the representations $V_i$ irreducible?
\end{question}

Note that, as in Example \ref{ex:Hilb}, it is not true that the representations which occur need all be distinct (as is the case in the classical Springer theory). By studying resolutions of slices in the affine Grassmannian, we will later show that the representations $V_i$ are not always irreducible.

\section{Weyl groups and Symplectic Galois Groups of Quiver Varieties}\label{sec:quiver-Weyl}
We now compute the symplectic Galois groups for type $ADE$ quiver varieties. Nakajima \cite{N98} has shown how to construct highest weight representations in the cohomology of quiver varieties, where each variety $Y(\mathbf v,\mathbf w)$ corresponds to the weight space $\Lw - \Qv$. It turns out that if $W$ is the Weyl group attached to the $ADE$ quiver $Q$, then the symplectic Galois group is simply the parabolic subgroup of the Weyl group associated to this weight.  The corresponding can fail for $Q$ of affine type: see Example \ref{affine example}.

It follows that our Galois group action produces actions of the Weyl group on the cohomology of fibres of $\pi$ corresponding to zero weight spaces of irreducible representations of the associated Lie algebra. Such actions have been constructed by Lusztig, Nakajima and Maffei. In these constructions, the actions of simple reflections are defined, and it requires some effort to establish the braid relations. Our construction has the advantage of yielding the group action directly from topology, avoiding elaborate calculations. Of course, their work yields more explicit information, thus our approach should be seen as complementing theirs.

Let us introduce some notation: If $\lambda \in P$ then we write 
\[
\Phi_\lambda = \{\alpha \in \Phi: (\lambda,\alpha)=0\}
\]
for the parabolic subroot system associated to $\lambda$, and $\Phi^+_\lambda$ for the positive roots in $\Phi_\lambda$. Let us denote by $\Phi_{\lambda}^{\text{max}}$ the roots in $\Phi_\lambda$ which are maximal in $\Phi_\lambda$ with respect to the partial order on $R^+$.

\begin{lemma}
\label{codim 2 strata}
Let $X(\mathbf v,\mathbf w)$ be a quiver variety such that $X(\mathbf v,\mathbf w)^{\text{reg}} \neq \emptyset$. Then if $\mu = \Lw - \Qv$, the codimension two strata of $X(\mathbf v,\mathbf w)$ are indexed by the roots in $\Phi_{\mu}^{\text{max}}$.  
\end{lemma}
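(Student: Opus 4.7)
The plan is to reduce the claim to a Lie-theoretic computation by expressing the codimension of each stratum in weight-theoretic terms. First, by Proposition \ref{Xstratification} the strata are $X(\mathbf v',\mathbf w)^{\text{reg}}$ for $\mathbf v'\le \mathbf v$ with $\mu':=\Lw-\alpha_{\mathbf v'}\in P^+$, and each has dimension $\dim Y(\mathbf v',\mathbf w)=2(\mathbf v')^t\mathbf w-(\mathbf v')^tC\mathbf v'$. Using the simply-laced identifications $(\Lambda_i,\alpha_j)=\delta_{ij}$ and $(\alpha_i,\alpha_j)=C_{ij}$, this rewrites as $(\Lw,\Lw)-(\mu',\mu')$, so the codimension of the $\mathbf v'$-stratum in $X(\mathbf v,\mathbf w)$ equals $(\mu',\mu')-(\mu,\mu)$.

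Writing $\beta=\alpha_{\mathbf v}-\alpha_{\mathbf v'}\in R^+$, so $\mu'=\mu+\beta$, the codimension-$2$ condition becomes the Diophantine equation
\[
(\beta,\beta)+2(\mu,\beta)=2.
\]
Since $\mu\in P^+$ and $\beta\in R^+$, one has $(\mu,\beta)\ge 0$; moreover $(\beta,\beta)$ is a non-negative even integer, vanishing only for $\beta=0$. The only nonzero solution is therefore $(\mu,\beta)=0$ and $(\beta,\beta)=2$. Since in simply-laced type the nonzero vectors of squared length $2$ in $R$ are exactly the roots, it follows that $\beta$ is a positive root orthogonal to $\mu$, i.e., $\beta\in \Phi_\mu^+$.

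It remains to characterize which $\beta\in \Phi_\mu^+$ additionally satisfy $\mu+\beta\in P^+$. For a simple root $\alpha_i\notin \Phi_\mu$ one has $(\mu,\alpha_i)\ge 1$, and in ADE type $(\beta,\alpha_i)\ge -1$ for every positive root $\beta$ (root strings have length at most $2$), so $(\mu+\beta,\alpha_i)\ge 0$ automatically. For $\alpha_i\in \Phi_\mu$ (using the standard fact that $\Phi_\mu$ is the parabolic subsystem generated by the simple roots orthogonal to $\mu$), the condition reduces to $(\beta,\alpha_i)\ge 0$, i.e., $\beta$ is dominant for $\Phi_\mu$. The dominant positive roots of a root system are exactly the highest roots of its irreducible components, and these are precisely the maximal elements of $\Phi_\mu^+$ for the partial order on $R^+$ (this partial order restricts correctly since $\Phi_\mu$ is parabolic). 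Hence $\beta\in\Phi_\mu^{\text{max}}$.

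The main obstacle is the integer-point analysis in the second paragraph: one must combine positive-definiteness of the form on $R$ (valid in ADE), parity of $(\beta,\beta)$, and the length-$2$ characterization of roots in a simply-laced lattice to pin down the solutions. The remaining combinatorics is standard but genuinely uses the simply-laced hypothesis (e.g.\ for the bound $(\beta,\alpha_i)\ge -1$ and for the parabolic description of $\Phi_\mu$); in non-simply-laced or affine types these inputs fail, which is consistent with the paper's later remark that the naive analogue does not hold beyond finite type.
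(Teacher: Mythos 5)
Your proof is correct and follows essentially the same route as the paper's: reduce via Proposition~\ref{Xstratification} and the dimension formula to the equation $2(\mu,\beta)+(\beta,\beta)=2$, conclude $\beta\in\Phi_\mu$, and then identify $\{\beta\in\Phi_\mu^+:\mu+\beta\in P^+\}$ with $\Phi_\mu^{\text{max}}$ using the simply-laced bounds $(\mu,\alpha_i)\ge 1$ and $(\beta,\alpha_i)\ge -1$. The only differences are expository: you spell out the rewriting $\dim Y(\mathbf v',\mathbf w)=(\Lambda_{\mathbf w},\Lambda_{\mathbf w})-(\mu',\mu')$ and the parity/positive-definiteness argument pinning down $(\beta,\beta)=2$, steps the paper leaves implicit.
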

\begin{proof}
Recall that $X(\mathbf v',\mathbf w)^{\text{reg}} \neq \emptyset$ if and only if $\Lambda_\mathbf w - \alpha_{\mathbf v'}\in P^+$, and this is a stratum of $X(\mathbf v,\mathbf w)$ if $\alpha_{\mathbf v'}\leq \alpha_{\mathbf v}$. Let $\alpha_{\mathbf v} = \alpha_{\mathbf v'} + \beta$, and let $\mu = \Lambda_\mathbf w - \alpha_{\mathbf v}$. Then if $\Lambda_{\mathbf w}- \alpha_{\mathbf v'} \in P^+$, the dimension formula for quiver varieties shows that $X(\mathbf v',\mathbf w)^{\text{reg}}$ is a codimension two stratum precisely when
\[
2(\mu,\beta)+(\beta,\beta) = 2.
\]
Since $\mu \in P^+$ and $\beta \in R^+$, we must have $(\mu,\beta)=0$ and $(\beta,\beta)=2$, that is, $\beta \in \Phi_{\mu}$, the parabolic subsystem of $\Phi$ corresponding to $\mu$. Thus the codimension two strata are labelled by roots $\beta\in \Phi^+_{\mu}$ for which $\mu + \beta \in P^+$. 

We now claim that this set is exactly $\Phi^{\text{max}}_{\mu}$. Indeed if $\beta$ is such that $\mu+\beta \in P^+$, then for any $\alpha \in \Phi_{\mu}^+$ we have $0 \leq (\mu+\beta, \alpha) = (\beta,\alpha)$, and hence (as our root system is simply-laced, so root strings have length at most $2$) it follows that $\beta \in \Phi_{\mu}^{\text{max}}$. 

Conversely, if $\beta \in \Phi_{\mu}^{\text{max}}$, and $\alpha_i \in \Phi$ is a simple root, consider $(\mu+\beta,\alpha_i)$. If $\alpha_i \in \Phi_{\mu}$ then $(\beta,\alpha_i)\geq 0$ since $\beta$ is maximal (otherwise $s_{\alpha_i}(\beta) = \beta+ \alpha_i \in \Phi_{\mu}$) and so $(\mu+\beta,\alpha_i)\geq 0$.  If $\alpha_i \notin \Phi_{\mu}$, then since $\mu \in P^+$ we must have $(\mu,\alpha_i)\geq 1$, and since our root system is simply-laced, $(\alpha_i,\beta)\geq -1$, and hence $(\mu+\beta,\alpha_i) \geq 0$ in this case also. 
\end{proof}

\begin{lemma}
\label{codim 2 Weyl groups}
Suppose that $X(\mathbf v,\mathbf w)$ is as above, and $\beta \in \Phi_{\mu}^{\text{max}}$. Then the slice through the stratum associated to $\beta$ by Lemma \ref{codim 2 strata} is isomorphic to the Kleinian singularity indexed by the subroot system $\Phi(\beta) = \{\alpha \in \Phi: \alpha \leq \beta\}$, an indecomposable subsystem of $\Phi$.
\end{lemma}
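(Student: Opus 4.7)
The plan is to apply Nakajima's transverse slice theorem to reduce to a 2-dimensional quiver variety supported on the subdiagram $J := \text{supp}_R(\beta)$, and then identify that slice with the Kleinian singularity of type $J$.

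First I would invoke Nakajima's transverse slice theorem (see \S 3 and \S 6 of \cite{N98}): at a generic point of the codimension-two stratum $X(\mathbf v',\mathbf w)^{\text{reg}}$ with $\alpha_{\mathbf v}=\alpha_{\mathbf v'}+\beta$, the analytic germ of $X(\mathbf v,\mathbf w)$ at that point decomposes as the product of the germ of the stratum with a transverse slice, which is itself a Nakajima quiver variety on $Q$ with dimension vector $\mathbf v_\beta$ (the coefficient vector of $\beta$) and framing corresponding to the dominant weight $\mu+\beta$.

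Next I would show that the slice is effectively supported on $J$. Expanding $0=(\mu,\beta)=\sum_{i\in I}\beta_i(\mu,\alpha_i)$, dominance of $\mu$ together with positivity of each $\beta_i$ for $i\in J$ forces $(\mu,\alpha_i)=0$ for all such $i$; in particular, every simple root indexed by $J$ lies in $\Phi_\mu$ and so the parabolic subsystem $\Phi_J$ satisfies $\Phi_J\subseteq\Phi_\mu$. Since $\beta\in\Phi_\mu^{\text{max}}$ is therefore also maximal in $\Phi_J$, and since $\Phi_J$ is indecomposable (the support of a positive root in an ADE root system being connected), $\beta$ is the highest root of $\Phi_J$; this identifies $\Phi(\beta)$ with the indecomposable subsystem $\Phi_J$, and the effective framing on $J$ is computed to be $\mathbf w^{\text{slice}}_i=(\beta,\alpha_i)$ for $i\in J$ (and zero elsewhere).

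Finally, the 2-dimensional slice quiver variety $X_J(\mathbf v_\beta|_J,\mathbf w^{\text{slice}}|_J)$ is identified with the Kleinian singularity of type $J$: its dimension vector equals the highest root of $\Phi_J$ and the framing is the canonical one producing a 2-dimensional quotient, thereby recovering Kronheimer's quiver realization of the ALE singularity of type $J$. Equivalently, the slice is by construction a conical affine symplectic surface---hence a du Val singularity by the classical classification---and the Dynkin type is pinned down by the exceptional divisor of the induced resolution, which by Nakajima's description of the fibres over a codimension-two stratum is a configuration of $\mathbb P^1$'s with incidence graph $J$. The main obstacle will be this final explicit identification: while the shape of the argument is clear, matching Nakajima's framing data with the canonical presentation of the Kleinian singularity of type $J$ requires detailed bookkeeping, which is routine in each individual case but tedious to carry out uniformly across all ADE types.
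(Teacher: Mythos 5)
Your proof follows essentially the same route as the paper: apply Nakajima's slice theorem to realize the slice as the quiver variety $X(\mathbf v_0,\mathbf w_0)$ with $\alpha_{\mathbf v_0}=\beta$ and $\Lambda_{\mathbf w_0}=\mu+\beta$, use $(\mu,\beta)=0$ and dominance of $\mu$ to kill the framing contribution of $\mu$ on $\operatorname{supp}_R(\beta)$, and then recognize the resulting quiver variety on the subdiagram $Q'=\operatorname{supp}_R(\beta)$ as the Kleinian singularity of type $Q'$. The final identification you worry about is in fact uniform rather than case-by-case: once you note that the effective framing on $J$ is $\mathbf w^{\mathrm{slice}}_i=(\beta,\alpha_i)=(C'\mathbf v_0')_i$, this is precisely the standard realization $X_{Q'}(\mathbf v_0',C'\mathbf v_0')$ of the ALE singularity (which the paper simply cites as well known), so no type-by-type bookkeeping is needed.
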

\begin{proof}

By \cite{N00}, Nakajima constructs an (analytic) slice through a stratum $X(\mathbf v',\mathbf w)^{\text{reg}}$ which is isomorphic to $X(\mathbf v-\mathbf v',\mathbf w - C\mathbf v')$. In our case it follows that the slices are isomorphic to $X(\mathbf v_0,\mathbf w_0)$ where $\alpha_{\mathbf v_0} = \beta$ and $\Lambda_{w_0} = \mu+\beta)$.  It is known that the support $\text{supp}_R(\beta)$ of a root $\beta$ is always a connected subdiagram of the Dynkin diagram. The condition that $(\beta,\mu)=0$ shows that $X(\mathbf v_0,\mathbf w_0)$ is isomorphic to the quiver variety $X(\mathbf v_0',C'(\mathbf v_0'))$ for the quiver associated to the subdiagram $Q'$ given by $\text{supp}_R(\beta)$. (Here $C'$ is the Cartan matrix attached to $Q'$ and $\mathbf v_0'$ is just $\mathbf v_0$ viewed as a dimension vector for $Q'$.) It is well known that this quiver variety is the Kleinian singularity associated to the Weyl group of type $Q'$, or equivalently, the root system $\Phi(\beta)$. 
\end{proof}

\begin{remark}
The arguments above can also be used to show that the two-dimensional quiver varieties associated to an $ADE$ quiver $Q$ are precisely the Kleinian singularities attached to Dynkin diagrams which occur as subdiagrams of $Q$.
\end{remark}

\begin{theorem}
\label{QuiverWeylgroup}
Suppose $\mathbf v,\mathbf w$ are as above. Let $W_\mu$ be the Weyl group of the subroot system $\Phi_{\mu}$. The symplectic Galois group associated to the quiver variety $X(\mathbf v,\mathbf w)$ is $W_\mu$. 
\end{theorem}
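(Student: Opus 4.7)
The plan is to apply Namikawa's explicit product formula
\[
W = \prod_{B \in \mathcal B} W_B
\]
recalled in Section \ref{sec:defthy} and to identify the right-hand side with $W_\mu$ using Lemmas \ref{codim 2 strata} and \ref{codim 2 Weyl groups}, which reduce the problem to a combinatorial statement about $\Phi_\mu$ together with a vanishing statement for monodromy along each codimension two stratum.

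First I would observe that $\Phi_\mu^{\text{max}}$ is in canonical bijection with the set of irreducible components of $\Phi_\mu$, via the map $\Psi \mapsto \theta_\Psi$, where $\theta_\Psi$ denotes the highest root of $\Psi$. Since $\Phi_\mu$ is a parabolic subsystem of $\Phi$, each component $\Psi$ is itself parabolic, and any $\alpha \in \Phi_\mu^+$ with $\alpha \geq \theta_\Psi$ in the $R^+$-order must have $\text{supp}_R(\alpha) \subseteq \text{supp}_R(\theta_\Psi)$, hence $\alpha \in \Psi^+$; since $\theta_\Psi$ is highest in $\Psi$ this forces $\alpha = \theta_\Psi$. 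Conversely any maximal $\beta \in \Phi_\mu^+$ lies in a unique component and equals its highest root by the same argument. Combining this with Lemma \ref{codim 2 strata} gives a bijection between $\mathcal B$ and the set of irreducible components of $\Phi_\mu$, and Lemma \ref{codim 2 Weyl groups} identifies the Weyl group of the transverse Kleinian singularity at the stratum indexed by $\Psi$ as the Weyl group $W_\Psi$ of $\Psi$.

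The heart of the proof is showing that, for each $B$, the monodromy action of $\pi_1(B)$ on $W'_B$ is trivial, so that $W_B = W_\Psi$. The key point is that Nakajima's slice theorem presents the slice at each $x \in B$ as the affine quiver variety attached to the subquiver $Q'$ of $Q$ supported on $\text{supp}_R(\theta_\Psi)$, and this identification is natural in the underlying quiver data. Since the vertex labelling of $Q'$ is inherited from that of the ambient quiver $Q$ and is locally constant on $B$, the family of Kleinian singularities over $B$ is trivialised as a family of labelled ADE surface singularities. A nontrivial Dynkin diagram automorphism of $\Psi$ would permute these vertex labels, which cannot occur globally on a connected stratum, so the monodromy is trivial.

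Assembling the pieces gives
\[
W = \prod_{B \in \mathcal B} W_B = \prod_\Psi W_\Psi = W_\mu,
\]
where the last equality is the standard fact that the Weyl group of a root system is the product of the Weyl groups of its irreducible components. I expect the main obstacle to be the monodromy step; the combinatorial bijection is essentially book-keeping once one notices that $\Phi_\mu$ is parabolic, but to verify rigorously that no diagram automorphism arises one must check that the local analytic trivialisations of Nakajima's slice theorem assemble into a globally canonical labelled family on $B$. This should follow from the moduli-theoretic nature of the slice construction, but the details require some care.
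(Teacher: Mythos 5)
The combinatorial half of your argument coincides with the paper's: the bijection between $\Phi_\mu^{\text{max}}$ and the irreducible components of $\Phi_\mu$ (so that the strata of Lemma \ref{codim 2 strata} index components and the Weyl group from Lemma \ref{codim 2 Weyl groups} is the component Weyl group) is exactly what the paper also extracts, and once monodromy is trivial the product formula collapses to $W_\mu$. A minor bookkeeping point: from $\alpha\geq\theta_\Psi$ one gets $\operatorname{supp}_R(\theta_\Psi)\subseteq\operatorname{supp}_R(\alpha)$, not the inclusion you wrote; the conclusion $\alpha\in\Psi^+$, hence $\alpha=\theta_\Psi$, still follows because $\operatorname{supp}_R(\alpha)$ is connected, lies in the simple roots of the parabolic subsystem $\Phi_\mu$, and $\operatorname{supp}_R(\theta_\Psi)$ already exhausts its connected component.

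The genuine gap is in the monodromy step, and it is exactly where your proof and the paper's part ways. You assert that Nakajima's slice theorem presents the transverse slice at each $x$ in a codimension two stratum $B$ as the quiver variety for the subquiver $Q'$ supported on $\operatorname{supp}_R(\theta_\Psi)$, and that because the vertex labelling of $Q'$ is ``inherited'' from $Q$ the resulting family of Kleinian singularities is trivial as a labelled family. But Nakajima's slice theorem gives a local analytic isomorphism $U\cong U_x\times T$ depending on choices (isotypic decompositions, local trivialisations), and whether these choices cohere around a loop in $B$ is precisely the monodromy question. Saying the labels are ``locally constant'' does not rule out nontrivial monodromy: that is what local systems are. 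You flag this yourself at the end, and indeed without producing some intrinsic invariant that pins down the $\mathbb{P}^1$'s in the exceptional fibre (Chern classes of tautological line bundles over the slice, say) and proving it is monodromy-invariant, the argument is not complete.

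The paper circumvents the issue entirely in Proposition \ref{nomonodromy}, by a representation-theoretic count rather than a naturality argument. It works with the Steinberg-type variety $Z(\mathbf v,\mathbf w)=Y(\mathbf v,\mathbf w)\times_{X(\mathbf v,\mathbf w)}Y(\mathbf v,\mathbf w)$ and compares two numbers attached to a stratum: the number $s$ of irreducible components of $Z(\mathbf v,\mathbf w)$ lying over the closure of $X(\mathbf v-\alpha,\mathbf w)^{\text{reg}}$, which equals the number of monodromy orbits on pairs of components of $\pi^{-1}(x)$, and the dimension of $L(\eta+\alpha)_\eta^*\otimes L(\eta+\alpha)_\eta$, which Nakajima's Theorems 10.2 and 10.15 in \cite{N98} identify with $r^2$ where $r$ is the number of components of $\pi^{-1}(x)$. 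Equality $s=r^2$ then forces the monodromy to be trivial. This is a self-contained mechanism that requires no statement about naturality of slice identifications; to make your version rigorous you would need to supply the missing invariant, whereas the paper substitutes a theorem of Nakajima's that does the required global bookkeeping for free.
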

\begin{proof}

We have seen that the codimension two strata are labelled by the set $\Phi_{\mu}^{\text{max}}$. Each $\beta \in \Phi_{\mu}^{\text{max}}$ is associated to an indecomposable subroot system $\Phi(\beta)$ of $\Phi_{\mu}$. Lemma \ref{codim 2 Weyl groups} shows that the slice through the associated stratum is the Kleinian singularity associated to the subroot system, which has symplectic Galois group $W(\Phi(\beta))$, the Weyl group of the root system $\Phi(\beta)$. Now Namikawa's description of the symplectic Galois groups shows that the group attached to $X(\mathbf v,\mathbf w)$ is 

\[
W = \prod_{\beta \in \Phi^{\text{max}}_{\mu}} W(\Phi(\beta))^{\sigma_\beta}
\]
where $\sigma_\beta$ is a diagram automorphism of the Dynkin diagram induced by monodromy coming from the stratum. Thus it remains to show that this monodromy must be trivial. We show this in the following proposition.
\end{proof}

\begin{prop}
\label{nomonodromy}
Let $\mathbf v,\mathbf w$ be as above. The monondromy automorphism $\sigma_\beta$ for every codimension two stratum in $X(\mathbf v,\mathbf w)$ is trivial.
\end{prop}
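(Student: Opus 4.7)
The plan is to exhibit $\text{rank}(\Phi(\beta))$ monodromy-invariant classes spanning $H^2$ of the Kleinian slice resolution. Since the Weyl group of a Kleinian singularity acts faithfully on the $H^2$ of its minimal resolution, any diagram automorphism that fixes $H^2$ pointwise must be trivial; this is the criterion I will verify.

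To set up, by Proposition \ref{prop:res-exists} I may assume that $\pi_{\mathbf v}\colon Y(\mathbf v, \mathbf w) \to X(\mathbf v, \mathbf w)$ is itself a symplectic resolution. Let $B$ be the codimension-two stratum associated to $\beta \in \Phi_\mu^{\text{max}}$, pick a basepoint $x \in B$, and set $Y_0 := \pi_{\mathbf v}^{-1}(x)$. By Nakajima's slice theorem and Lemma \ref{codim 2 Weyl groups}, $\pi_{\mathbf v}$ is analytically locally along $B$ the product of $B$ with the minimal resolution $\rho\colon Y_0 \to X_0$ of the Kleinian singularity of type $\Phi(\beta)$. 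The integral cohomologies $H^2(Y_0, \mathbb Z)$ therefore assemble into a local system on $B$, and $\sigma_\beta$ is exactly its monodromy representation, viewed as a permutation of the exceptional components $\{E_i\}$ of $\rho$ (equivalently, of the simple roots of $\Phi(\beta)$), indexed by the vertices of $Q' = \text{supp}_R(\beta)$.

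For the main step, for each vertex $i$ of the ambient quiver $Q$ the tautological bundle $\mathcal V_i$ on $Y(\mathbf v, \mathbf w)$ is globally defined, so the class $c_1(\mathcal V_i)|_{Y_0} \in H^2(Y_0, \mathbb Z)$ is a flat section of the local system above, in particular $\sigma_\beta$-invariant. Letting $i$ range over $Q'$ produces $|Q'| = \text{rank}(\Phi(\beta)) = \dim H^2(Y_0, \mathbb Q)$ such invariant classes. By Nakajima's slice construction, the restrictions $\mathcal V_i|_{Y_0}$ for $i \in Q'$ coincide, up to trivial summands (which do not affect $c_1$), with the tautological bundles of the slice quiver variety $Y_0 = Y(\mathbf v_0, \mathbf w_0)$; surjectivity of the period/Chern character map $\kappa$ (the Corollary in Section 3.4) applied to $Y_0$ then guarantees these classes span $H^2(Y_0, \mathbb Q)$. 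Consequently $\sigma_\beta$ fixes a basis of $H^2(Y_0, \mathbb Q)$, acts as the identity there, and is therefore trivial by the faithfulness of the Weyl group action. The only step requiring genuine care is the identification of $\mathcal V_i|_{Y_0}$ with the slice tautological bundle up to trivial summands; this is a bookkeeping exercise extracted from Nakajima's explicit construction of the slice isomorphism in \cite{N00}, while the remaining steps are formal.
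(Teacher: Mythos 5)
Your argument is correct in outline, but it takes a genuinely different route from the paper's.  The paper works entirely on the ambient quiver variety: it introduces the Steinberg-type variety $Z(\mathbf v,\mathbf w) = Y(\mathbf v,\mathbf w)\times_{X(\mathbf v,\mathbf w)}Y(\mathbf v,\mathbf w)$ and invokes Nakajima's Theorems~10.2 and 10.15 of \cite{N98}, which identify the number $r$ of irreducible components of a fibre $\pi^{-1}(x)$ over the stratum with a weight-space dimension $\dim L(\eta+\alpha)_\eta$, and the number $s$ of Lagrangian components of $Z(\mathbf v,\mathbf w)$ lying over the closure of that stratum with $\dim\big(L(\eta+\alpha)_\eta^*\otimes L(\eta+\alpha)_\eta\big)=r^2$.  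Since $s$ is also the number of monodromy orbits on the $r^2$ components of $\tilde\pi^{-1}(x)$, the equality $s=r^2$ forces trivial monodromy directly on the set of exceptional components, with no need to unwind the slice at all.  Your argument instead localises to the slice, uses global definedness of the tautological bundles $\mathcal V_i$ to produce flat sections of the local system $R^2(\pi|_B)_*\mathbb Q$, and invokes Kirwan surjectivity (the corollary that $\kappa$ is surjective) to show these span $H^2$ of the Kleinian resolution.  This works: a diagram automorphism acting trivially on the Cartan is trivial, and the count $|Q'|=\operatorname{rank}\Phi(\beta)=\dim H^2$ matches.  What you gain is a more topological, self-contained argument that sidesteps the Kac--Moody representation theory underlying Nakajima's Theorems 10.2 and 10.15; what you trade away is that the paper never has to touch the slice isomorphism, whereas your proof hinges on the claim that $\mathcal V_i|_{Y_0}$ agrees, up to a trivial summand of rank $\mathbf v_i'$, with the slice's own tautological bundle $\mathcal V_i^{\mathrm{slice}}$.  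That identification is indeed visible in the proof of the slice theorem in \cite{N00} (the splitting $V_i=V_i^0\oplus V_i'$, with $V_i'$ the $H$-fixed part, gives exactly this decomposition), but it is not merely notational: it requires tracking the tautological bundles through the passage to stabilisers and normal slices, and it is the one point in your proof that cannot be called purely formal.  You should either prove it or at least record precisely where in \cite{N00} it is established.
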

\begin{proof}

Let $Z(\mathbf v,\mathbf w) = Y(\mathbf v,\mathbf w)\times_{X(\mathbf v,\mathbf w)}Y(\mathbf v,\mathbf w)$, and let $\tilde{\pi}\colon Z(\mathbf v,\mathbf w) \to X(\mathbf v,\mathbf w)$ be the canonical map (induced from the projective morphism $\pi_\mathbf v \colon Y(\mathbf v,\mathbf w) \to X(\mathbf v,\mathbf w)$). By \cite[Theorem 7.2]{N98} $Z(\mathbf v,\mathbf w)$ is a Lagrangian subvariety of $Y(\mathbf v,\mathbf w)\times Y(\mathbf v,\mathbf w)$. Let $[Z(\mathbf v,\mathbf w)]_{\alpha}$ denote the set of components of $Z(\mathbf v,\mathbf w)$ in the closure of $\tilde{\pi}^{-1}(X(\mathbf v-\alpha,\mathbf w)^{\text{reg}})$. 

For an irreducible representation $L(\lambda)$ of the semisimple Lie algebra associated to our quiver with highest weight $\lambda$, let $L(\lambda)_\mu$ denote its $\mu$-weight space. Let $\eta = \mathbf w-\mathbf v \in P^+$. It is shown in \cite[Theorem 10.15]{N98} that there is a natural isomorphism between the formal linear span of $[Z(\mathbf v,\mathbf w)]_{\alpha}$ (viewed as a subspace of the top cohomology of $Z(\mathbf v,\mathbf w)$) and the tensor product $L(\eta+\alpha)_{\eta}^*\otimes L(\eta+\alpha)_{\eta}$. But now the number of components $s$ in $[Z(\mathbf v,\mathbf w)]_{\alpha}$ is just the the number of orbits of the monodromy action on the components of $\tilde{\pi}^{-1}(x)$ for $x \in X(\eta,\mathbf w)^{\text{reg}}$, of which there are clearly $r^2$, where $r$ is the number of components in $\pi_\mathbf v^{-1}(x)$. Now Theorem $10.2$ of \cite{N98} shows that the dimension of $L(\eta+\alpha)_{\eta}$ is exactly $r$, and hence using the natural isomorphism above, we find $s = r^2$, and hence the monodromy action must be trivial as required.
\end{proof}

\begin{example}
Suppose our quiver is $A_3$, and $\mathbf w=\Lambda_1+\Lambda_2+\Lambda_3$, $\mathbf v = \alpha_1+\alpha_2 + \alpha_3$. Then $X(\mathbf v,\mathbf w)$ is $4$ dimensional, and has four strata, labelled by $0,\alpha_1+\alpha_2,\alpha_2+\alpha_3$ and $\mathbf v$ itself, where the stratum corresponding to $0$ is a point, and each of $X(\alpha_1+\alpha_2,\mathbf w)^{\text{reg}}$ and $X(\alpha_2+\alpha_3,\mathbf w)^{\text{reg}}$ is two-dimensional. The slice through the stratum $X(\alpha_1+\alpha_2,\mathbf w)$ is isomorphic to $X(\alpha_3,2\Lambda_3)$ while that through $X(\alpha_2+\alpha_3,\mathbf w)$ is isomorphic to $X(\alpha_1,2\Lambda_1)$. These are both Kleinian singularities of type $A_1$, and so the symplectic Galois group in this case is $S_2\times S_2$, which is the parabolic subgroup of $S_4$ stabilizing the weight $\mathbf w - \mathbf v =\Lambda_2$.
\end{example}

Our calculation of the symplectic Galois group for finite type quiver varieties, along with the general theory of Section \ref{Basics}, have the following immediate consequence:

\begin{cor}
Let $\pi \colon Y(\mathbf v,\mathbf w) \to X(\mathbf v,\mathbf w)$ be as above, and let $\lambda = \Lambda_w - \alpha_v$. If $x \in X(\mathbf v,\mathbf w)$, then the cohomologies $H^*(\pi^{-1}(x))$ carry an action of the group $W_\lambda$.
\end{cor}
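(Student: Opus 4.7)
The plan is to combine Theorem \ref{QuiverWeylgroup} with the general existence of symplectic Springer representations given by Corollary \ref{cor:action}. Concretely, Corollary \ref{cor:action} tells us that whenever we have a conical symplectic resolution, its symplectic Galois group acts faithfully on the Springer sheaf, and hence on each stalk $\mathsf{Spr}_x \cong H^*(\pi^{-1}(x))[d]$. So all that needs to be done is to recognise this group, for the quiver varieties at hand, as $W_\lambda$.

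First I would invoke Proposition \ref{prop:res-exists} to replace, if necessary, the map $\pi\colon Y(\mathbf v, \mathbf w) \to X(\mathbf v, \mathbf w)$ by a genuine symplectic resolution: in finite type, there exists $\mathbf v'$ with $X(\mathbf v', \mathbf w) \cong X(\mathbf v, \mathbf w)$ such that $\pi_{\mathbf v'}\colon Y(\mathbf v', \mathbf w) \to X(\mathbf v', \mathbf w)$ is a symplectic resolution. Since the symplectic Galois group is an invariant of the singular base, it is unchanged by this replacement, and by construction (Proposition \ref{prop:res-exists}) the fibres of $\pi_{\mathbf v'}$ over $X(\mathbf v', \mathbf w)$ are identified with the fibres of $\pi$ over $X(\mathbf v, \mathbf w)$ under the canonical isomorphism. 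Setting $\lambda' = \Lambda_\mathbf w - \alpha_{\mathbf v'}$, the weights $\lambda$ and $\lambda'$ both lie in the set of weights of the irreducible representation $L(\Lambda_\mathbf w)$ and hence in a common $W$-orbit, so their parabolic stabilizers $W_\lambda$ and $W_{\lambda'}$ are conjugate (and in particular abstractly isomorphic) subgroups of $W$.

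Then, applying Theorem \ref{QuiverWeylgroup} to $\pi_{\mathbf v'}$ identifies the symplectic Galois group as $W_{\lambda'} \cong W_\lambda$, and applying Corollary \ref{cor:action} to the conical symplectic resolution $\pi_{\mathbf v'}$ gives the required action of this group on each stalk $H^*(\pi_{\mathbf v'}^{-1}(x'))$. Transporting this through the isomorphism $X(\mathbf v, \mathbf w) \cong X(\mathbf v', \mathbf w)$ gives the claimed action of $W_\lambda$ on $H^*(\pi^{-1}(x))$ for every $x \in X(\mathbf v, \mathbf w)$.

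No real obstacle arises: this corollary is a bookkeeping consequence of two results already in hand. The only mild point to address is the possible discrepancy between $\mathbf v$ and the $\mathbf v'$ produced by Proposition \ref{prop:res-exists}, which I would handle by the $W$-conjugacy of $\lambda$ and $\lambda'$ noted above.
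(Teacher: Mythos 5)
Your core approach is the right one and is exactly what the paper intends: combine the identification of the symplectic Galois group in Theorem \ref{QuiverWeylgroup} with the general existence of an action on stalk cohomologies from Corollary \ref{cor:action}. However, the detour through $\mathbf v'$ introduces two claims that are not correct.

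First, you assert that $\lambda$ and $\lambda'$ are $W$-conjugate because ``both lie in the set of weights of $L(\Lambda_\mathbf w)$''. This reasoning is false: a representation $L(\Lambda_\mathbf w)$ (unless it is minuscule) has many distinct $W$-orbits of weights, so membership in $\Pi(\Lambda_\mathbf w)$ does not entail conjugacy. In fact, if $\lambda\in P^+$ and $\lambda'\in P^+$ are distinct dominant weights, they are \emph{never} $W$-conjugate.

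Second, you claim that ``the fibres of $\pi_{\mathbf v'}$ over $X(\mathbf v', \mathbf w)$ are identified with the fibres of $\pi$ over $X(\mathbf v, \mathbf w)$ under the canonical isomorphism''. Proposition \ref{prop:res-exists} only identifies the \emph{targets} $X(\mathbf v',\mathbf w)\cong X(\mathbf v,\mathbf w)$; the sources $Y(\mathbf v',\mathbf w)$ and $Y(\mathbf v,\mathbf w)$ are genuinely different varieties with different maps. Example \ref{quiver example} is a concrete illustration: $Y(2,2)$ is a single point whereas $Y(0,2)=X(0,2)=\mathbb{C}^2/\{\pm 1\}$ gives the trivial resolution, and the fibres of these maps over the cone point differ. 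So one cannot transport an action on $H^*(\pi_{\mathbf v'}^{-1}(x'))$ to $H^*(\pi^{-1}(x))$ this way.

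Both issues are avoidable, and indeed moot, if you read the hypothesis ``as above'' the way the paper does. Theorem \ref{QuiverWeylgroup} carries the standing assumption (from Lemma \ref{codim 2 strata}) that $X(\mathbf v,\mathbf w)^{\operatorname{reg}}\neq\emptyset$, equivalently $\lambda=\Lambda_{\mathbf w}-\alpha_{\mathbf v}\in P^+$. Under that assumption the proof of Proposition \ref{prop:res-exists} already produces $\mathbf v'=\mathbf v$, i.e. $\pi=\pi_{\mathbf v}$ itself is a conical symplectic resolution; Theorem \ref{QuiverWeylgroup} gives the Galois group $W_\lambda$ directly, and Corollary \ref{cor:action} yields the action on $H^*(\pi^{-1}(x))$. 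No replacement of $\mathbf v$ is needed, and the corollary really is an immediate consequence, as the paper says. Drop the $\mathbf v'$ detour (or, if you want to entertain the case $\lambda\notin P^+$, note that then $Y(\mathbf v,\mathbf w)$ resolves only a proper Poisson subvariety of $X(\mathbf v,\mathbf w)$, and one would need a separate analysis of that subvariety's Galois group rather than the one you offer).
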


\begin{example}\label{affine example}
We give an example of a symplectic Galois group for a quiver variety of affine type. Let $Q_\Gamma$ be the affine quiver associated by the McKay correspondence to the finite subgroup $\Gamma$ of $\text{SL}_2(\C)$. Then if we take $\Qv = n\delta$ where $\delta$ is a generator for the imaginary roots, and $\Lw = \Lambda_0$, where $0$ denotes the extending node of the affine quiver, then it is known that $X(\mathbf v,\mathbf w)$ is isomorphic to 
$\C^{2n}/\Gamma_n$ where $\Gamma_n = \Gamma \wr S_n$.  Now for the quotient of a symplectic vector space by a symplectic reflection group such as $\Gamma_n$ it is known that the stratification by stabilizer type coincides with that of the symplectic leaves. It follows that there are two codimension two strata: one labelled by the group $\Gamma$ and the other by a subgroup of order two generated by a simple reflection in $S_n$ (of course if $n=1$ then this stratum does not exist). It can be checked that the symplectic Galois group is therefore either $W(\Gamma)$, the Weyl group associated to $\Gamma$, if $n=1$, or $S_2\times W(\Gamma)$ for $n>1$. Note that the action of the affine Weyl group $W_\text{aff}$ fixes the imaginary root $\delta$, and thus $\Lambda_0-\delta$ and $\Lambda_0 - n\delta$ for $n>1$ have the same stabilizer in $W_{\text{aff}}$, thus the naive generalization of Theorem \ref{QuiverWeylgroup} fails to hold. 
\end{example}

\begin{remark}
Many authors, including Nakajima \cite{N03}, Lusztig \cite{Lu} and Maffei \cite{Maffei}, have studied actions of the Weyl group on the cohomology of quiver varieties. It would be interesting to compare the actions constructed here with these actions. A novelty of our approach is that the action of the group $W_\lambda$ is constructed without the use of a presentation of $W_\lambda$: previous approaches define the action of simple reflections and then must explicitly check the braid relations to deduce the existence of a Weyl group action. On the other hand, these approaches give a more explicit description of the action of the simple reflections. 

A first step in such a comparison might be to exhibit an explicit homomorphism from the Weyl group to the symplectic Galois group. As the previous example shows, we should not expect this to be surjective in general, but it seems reasonable to expect that there should be a natural embedding. Evidence for this is provided, for example, by the generic isomorphisms constructed in \cite{Maffei}. We hope to address the existence of such an embedding in future work.
\end{remark}

\section{Bionic symplectic varieties and Springer theory}

In this section we consider a special class of conic symplectic resolutions, which we call \textit{bionic}. These are symplectic resolutions $\pi\colon Y \to X$ which, in addition to having a conical action $\lambda\colon\mathbb G_m \to \text{Aut}(Y)$ acting with positive weight on the symplectic form, have another action $\mathbb G_m$-action $\rho\colon \mathbb G_m \to \text{Aut}(Y)$, commuting with the $\lambda$-action which is required to be Hamiltonian (and thus preserves the symplectic form on $Y$). This class of resolutions has been considered by a number of authors, see for example \cite{Lo}, \cite{BLPW}.

\begin{lemma}
The actions $\lambda$ and $\rho$ naturally extend to an action on the deformation $\pi\colon \mathcal Y \to \mathcal X$. Moreover the Hamiltonian action acts trivially on the base of the deformation of $Y$, thus it induces an action on each deformation $Y_t$ ($t\in H^2(Y,\mathbb C)$).
\end{lemma}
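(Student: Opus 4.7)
The plan is to leverage the universality of the conic deformation $\mathcal Y \to H^2(Y,\mathbb C)$ established in Proposition \ref{conic universality}, combined with the topological fact that $\mathbb G_m(\mathbb C)$ is path-connected.

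First, I would extend the actions. The $\lambda$-action is already present by construction of $\mathcal Y$, so only $\rho$ requires attention. Fix $s \in \mathbb G_m$. Because $\rho(s)$ commutes with $\lambda$ and preserves $\omega_Y$ (Hamiltonicity), the family $\mathcal Y \to B_Y$ with central fibre re-identified via $\rho(s)$ is still a weight-$d$ conic deformation of $Y$ in the sense of Section \ref{sec:Conic universality}. Proposition \ref{conic universality} then classifies this re-identified deformation by a unique linear map $\alpha_s\colon B_Y \to B_Y$ and produces a unique isomorphism lifting $\rho(s)$; composing with the original family structure yields $\widetilde{\rho}(s)\colon \mathcal Y \to \mathcal Y$ covering $\alpha_s$ and restricting to $\rho(s)$ on the central fibre. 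The action on $\mathcal X$ then follows by naturality (either via the affinisation description of $\mathcal X$ together with $W$-equivariance, or directly from the formal universality of $\mathcal X \to B_X$ recorded in the remark following Proposition \ref{conic universality}).

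Next, I would verify that $\rho$ acts trivially on the base. Under the period-map identification $B_Y \cong H^2(Y,\mathbb C)$, the linear map $\alpha_s$ coincides with $\rho(s)^{*}$ on singular cohomology; this is the same period-map identification used in the proof of Lemma \ref{naive identification}. But $\mathbb G_m(\mathbb C) = \mathbb C^{\times}$ is path-connected, so every $\rho(s)$ is homotopic to the identity through self-maps of $Y$, and therefore acts trivially on $H^{*}(Y,\mathbb C)$. Hence $\alpha_s = \operatorname{id}_{B_Y}$, so the Hamiltonian action fixes the base of the deformation pointwise, and consequently $\widetilde{\rho}(s)$ preserves each fibre $Y_t = \psi_Y^{-1}(t)$ and induces an action on it.

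The main obstacle is verifying that $s \mapsto \widetilde{\rho}(s)$ is a genuine group homomorphism rather than a mere family of individually defined isomorphisms. This reduces to the cocycle condition $\widetilde{\rho}(s_1)\widetilde{\rho}(s_2) = \widetilde{\rho}(s_1 s_2)$ together with the analogous multiplicativity $\alpha_{s_1}\alpha_{s_2} = \alpha_{s_1 s_2}$ on the base. Both sides of each identity are $\lambda$-equivariant self-maps of $\mathcal Y$ that restrict to $\rho(s_1 s_2)$ on the central fibre, so the uniqueness half of the universal property in Proposition \ref{conic universality} forces them to coincide. Everything else in the argument is formal consequence of connectedness and homotopy-invariance of cohomology.
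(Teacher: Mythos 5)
Your proof is correct in substance, but it takes a genuinely different route from the paper's. The paper's proof is essentially a citation: it invokes Namikawa's results directly, stating that the $\lambda$-extension is in the paper on Poisson deformations of affine symplectic varieties, and that the extension of the Hamiltonian action to the formal deformation (together with the fact that it fixes the base) follows from Lemma 20 of Namikawa's paper on flops and Poisson deformations, with the algebraicization step handled via the commutativity of $\rho$ with $\lambda$. Your argument, by contrast, is self-contained within the paper's own framework: you use the conic universality of Proposition \ref{conic universality} to lift each $\rho(s)$, identify the induced map $\alpha_s$ on the base with the action of $\rho(s)$ on $H^2(Y,\mathbb{C})$ via the period map, and then kill $\alpha_s$ by connectedness of $\mathbb{G}_m$, with uniqueness in the universal property yielding the cocycle condition. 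This is arguably the cleaner presentation, as it exposes why the base action is trivial (homotopy-invariance of cohomology) rather than delegating that to an external reference.

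Two points you should tighten. First, your descent of the action from $\mathcal{Y}$ to $\mathcal{X}$ is left vague; the cleanest route is to observe that $\rho(s)$ is a Poisson automorphism of $X$ (it preserves $\omega_Y$, hence the bracket on $Y$, hence on the affinization $X$), apply the universality of $\mathcal{X}\to B_X$ directly, and then check compatibility of the two lifts under $\tilde{\pi}$ again by uniqueness. Second, Proposition \ref{conic universality} as stated asserts only the existence of the classifying linear map, not its uniqueness or the uniqueness of the comparison isomorphism; your argument for the homomorphism property and for the period-map identification both lean on uniqueness, which is true (it comes from the Kaledin--Verbitsky formal universality underlying the proof of that Proposition) but is not recorded in the statement and should be flagged.
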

\begin{proof}
The fact that the action $\lambda$ extends is proved in \cite{Na1}. The arguments in \cite[Lemma 20]{Naflips} show that the Hamiltonian action naturally extends to a Hamiltonian action on  the formal deformation. That argument moreover shows that the  action preserves the base of the deformation. Since $\rho$ commutes with $\lambda$, the action on the formal deformation extends to the algebraicization, and moreover again fixes the base of the deformation, and hence induces a Hamiltonian action on each deformation. 
\end{proof}

Though we will focus on the case where $Y^\rho$ is finite, it is checked in \cite{Lo} that the connected components of the fixed point locus are always themselves conical symplectic resolutions. 
 
\begin{prop}
\label{restriction varieties}
Each connected component of the fixed point locus $Y^{\rho}$ for the $\rho$-action on $Y$ is a conical symplectic resolution.
\end{prop}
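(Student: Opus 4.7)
The plan is to show that each connected component $Z \subseteq Y^\rho$ is a conical symplectic resolution of its own affinisation $X_Z := \operatorname{Spec}\Gamma(Z, \mathcal{O}_Z)$.

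First I would invoke the classical fact that the fixed locus of a reductive group action on a smooth variety is smooth; hence $Z$ is a smooth closed subvariety of $Y$ with $T_zZ = (T_zY)^\rho$ at every $z\in Z$. To equip $Z$ with a symplectic form, I restrict $\omega_Y$: because $\rho$ is Hamiltonian it preserves $\omega_Y$, so in the $\rho$-weight decomposition $T_zY = T_zZ \oplus N_z$ the form pairs weight $k$ with weight $-k$; in particular $\omega_Y|_{T_zZ}$ is non-degenerate. Thus $\omega_Y|_Z$ is an algebraic symplectic form on $Z$.

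Since $\lambda$ commutes with $\rho$, it preserves $Y^\rho$ and, by continuity, each of its connected components, so $\lambda$ restricts to a $\mathbb{G}_m$-action on $Z$ that still scales $\omega_Y|_Z$ with weight $d>0$. Because $Y^\lambda$ maps to the cone point of $X$ and $\mathbb{C}[X]^{\mathbb{G}_m}=\mathbb{C}$, the induced $\lambda$-action on $\Gamma(Z,\mathcal{O}_Z)$ has non-negative weights, and the resulting weight-zero part has no non-trivial idempotents (since $Z$ is connected and $Z\to X_Z$ is surjective), so it equals $\mathbb{C}$. Hence $X_Z$ is a conical affine variety, and the contracting property of $\lambda$ forces the affinisation morphism $Z\to X_Z$ to be projective. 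Irreducibility of $Z$ together with a dimension count (and the fact that $\Gamma(Z, \mathcal{O}_Z)$ has fraction field $K(Z)$ under the contracting $\mathbb{G}_m$-action) shows that $Z\to X_Z$ is birational.

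The remaining step, and the main obstacle, is to verify that $X_Z$ is a symplectic variety in the sense of Beauville, so that $Z\to X_Z$ is a bona fide symplectic resolution. The Poisson bracket on $\Gamma(Z, \mathcal{O}_Z)$ induced by $\omega_Y|_Z$ gives $X_Z$ a Poisson structure that is generically non-degenerate; what genuinely requires work is showing that $X_Z$ has rational Gorenstein singularities across which $\omega_Y|_Z$ extends regularly to any resolution. I would handle this either by direct appeal to Losev's analysis of bionic resolutions in \cite{Lo}, which treats precisely this situation, or by an intrinsic argument: combine Kaledin's characterisation of symplectic singularities with Grauert--Riemenschneider vanishing on $Z$ (applicable because $Z$ is smooth symplectic with trivial canonical bundle) to conclude that $R\tilde{\pi}_*\mathcal{O}_Z = \mathcal{O}_{X_Z}$ and that the extension of $\omega_Y|_Z$ over $X_Z^{\reg}$ satisfies the required regularity on any resolution. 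This step is where one must genuinely leave the naive category of Poisson varieties and enter the more delicate setting of symplectic singularities.
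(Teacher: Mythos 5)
The paper itself offers no proof of this proposition: the sentence preceding it simply notes that the statement ``is checked in \cite{Lo}'', i.e.\ the result is delegated entirely to Losev. Your proposal, which ultimately also falls back on \cite{Lo} at the key step, is therefore consistent with the paper; the intrinsic sketch you append goes further than the paper does, and its opening steps (smoothness of $Z$, non-degeneracy of $\omega_Y|_Z$ via the $\rho$-weight decomposition, inheritance of the contracting $\lambda$-action) are correct. But the sketch has two genuine gaps that should be flagged.

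First, birationality of $Z \to X_Z$ is not established: the parenthetical ``the fact that $\Gamma(Z,\mathcal{O}_Z)$ has fraction field $K(Z)$'' \emph{is} the birationality claim, so the argument is circular as written. Moreover, it cannot be purely intrinsic to $Z$ — one must use the ambient resolution $\pi\colon Y\to X$. A correct route is: $Z$ is closed in $Y$ and $\pi$ is proper, so $\Gamma(Z,\mathcal{O}_Z)$ is a finite module over $\mathbb{C}[\pi(Z)]$, hence $X_Z\to\pi(Z)$ is finite and one is reduced to showing $\dim Z=\dim\pi(Z)$; for this one needs Kaledin's result that the fibres of a symplectic resolution are isotropic, so the fibres of $\pi|_Z$ are isotropic in the symplectic manifold $Z$, and since a Poisson map between symplectic manifolds is a submersion, a short tangent-space computation over the open symplectic leaf of $\pi(Z)$ forces those fibres to be finite. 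Second, passing from ``$R\tilde{\pi}_*\mathcal{O}_Z=\mathcal{O}_{X_Z}$, so $X_Z$ is normal with rational singularities'' to ``$X_Z$ is a symplectic variety'' also requires that $X_Z$ is Gorenstein: the top power of $\omega_Y|_Z$ trivialises $K_{X_Z^{\reg}}$ and hence, by normality, $K_{X_Z}$, after which Namikawa's criterion (rational Gorenstein plus a symplectic form on the smooth locus) applies. These are fixable, and the overall plan is sound, but since the paper makes no attempt at a proof, the safe course is to match it and cite \cite{Lo} directly.
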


\begin{lemma}
\label{finite fixed points etale}
Suppose that $\rho$ has finitely many fixed points on $Y$. Then $\mathcal Y^\rho$ is a trivial \'etale covering of $B_Y = H^2(Y,\mathbb C)$.
\end{lemma}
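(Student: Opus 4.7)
The plan is to exploit both the smoothness of $\mathcal{Y}$ and the commuting pair of torus actions. First I would note that $\mathcal{Y}$ itself is smooth, being the total space of a flat family of smooth symplectic varieties deforming $Y$, and that $\rho$ is reductive; hence $\mathcal{Y}^\rho$ is a smooth closed subscheme of $\mathcal{Y}$. Since $\rho$ acts trivially on $B_Y$ by the preceding lemma, the structure map $\theta_Y$ restricts to a well-defined morphism $\psi\colon \mathcal{Y}^\rho \to B_Y$, whose central fibre is the finite set $Y^\rho$ by hypothesis.

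Next I would show $\psi$ is \'etale. At a point $p\in Y^\rho$, the $\rho$-equivariant tangent sequence
\[
0 \to T_pY \to T_p\mathcal{Y} \to T_0 B_Y \to 0
\]
splits equivariantly by reductivity of $\rho$. Since $\rho$ is trivial on $B_Y$, and $(T_pY)^\rho=0$ (as $p$ is an isolated $\rho$-fixed point of $Y$), we obtain $T_p\mathcal{Y}^\rho=(T_p\mathcal{Y})^\rho=T_0B_Y$, so $\psi$ is \'etale at each $p\in Y^\rho$. To propagate this to all of $\mathcal{Y}^\rho$, I would bring in $\lambda$: since $\lambda$ commutes with $\rho$, it preserves $\mathcal{Y}^\rho$, and because $\lambda$ acts with positive weight $d$ on $B_Y$ and is contracting on $\mathcal{Y}$ (the latter because $\lambda$ contracts $\mathcal{X}$ to $0$ with proper $\widetilde{\pi}$), the only $\lambda$-fixed points of $\mathcal{Y}^\rho$ lie over $0\in B_Y$, and $\lambda$ contracts $\mathcal{Y}^\rho$ onto $Y^\rho$. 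The \'etale locus of $\psi$ is then an open $\lambda$-stable subset containing $Y^\rho$; by the contraction, such a subset is necessarily all of $\mathcal{Y}^\rho$.

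For the triviality, I would analyse the connected components of $\mathcal{Y}^\rho$ one at a time. A component $Z$ is $\lambda$-stable, and under $\lambda$-contraction its limits form a connected subset of the finite set $Y^\rho$, hence a single point $p\in Y^\rho$. So $Z$ is a smooth quasi-projective variety with a contracting $\lambda$-action having $p$ as its unique fixed point, and all $\lambda$-weights on $T_pZ=T_0B_Y$ equal to $d>0$. By the Bia{\l}ynicki-Birula linearisation theorem in this setting (apply Sumihiro to embed equivariantly into a projective completion if needed), $Z$ is $\mathbb{G}_m$-equivariantly isomorphic to $T_pZ\cong B_Y$. The restriction $\psi|_Z\colon B_Y\to B_Y$ is then a $\lambda$-equivariant \'etale morphism between vector spaces on which $\lambda$ acts with the single weight $d$ on both sides; any such map is linear, and being \'etale at $0$ forces it to be a linear isomorphism. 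Decomposing $\mathcal{Y}^\rho=\bigsqcup_{p\in Y^\rho}Z_p$ therefore gives the required trivial \'etale covering of $B_Y$ of degree $|Y^\rho|$.

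The main obstacle is step three, the linearisation of each component $Z$: one must verify that the appropriate Bia{\l}ynicki-Birula/Hesselink statement applies in our (smooth, quasi-projective, contracting, isolated fixed point) setting rather than the classical projective one. Once that is in hand, the remaining verifications---equivariant tangent comparisons, the $\lambda$-propagation of \'etaleness, and the rigidity of $\lambda$-equivariant maps between pure-weight vector spaces---are all straightforward.
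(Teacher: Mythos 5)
Your argument is correct in outline, and it takes a genuinely different route from the paper's for the key ``triviality'' step; the \'etaleness step matches in spirit (\'etale at $Y^\rho$ by the equivariant tangent computation, then propagated to all of $\mathcal{Y}^\rho$ via $\lambda$-contraction, which is exactly the paper's mechanism). Where you diverge is in passing from \'etale to a \emph{trivial} covering. The paper's route is: take a $\rho$-equivariant compactification to deduce quasi-finiteness can be upgraded to finiteness; then use the $C^\infty$-triviality of $\mathcal{Y}\to B_Y$ and the fact that $\#\mathcal{Y}_t^\rho=\chi(\mathcal{Y}_t)=\chi(Y)$ to show the fibre count is constant; then invoke simple-connectedness of $H^2(Y,\mathbb{C})$. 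Your route analyses connected components one at a time and identifies each directly with $B_Y$. Each has advantages: the paper's argument is less dependent on the fine structure of $B_Y$ (it only needs $\pi_1(B_Y)=1$), while yours is more geometric and avoids both the Euler-characteristic localisation and the compactification.

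One caveat on the Bia{\l}ynicki--Birula step: the classical BB linearisation of an attracting cell is a statement about \emph{complete} (or smooth projective) $\mathbb{G}_m$-varieties, and $Z$ is only quasi-projective; Sumihiro gives an equivariant projective embedding, but then one must argue that $Z$ coincides with the full BB cell of $p$ inside the (possibly singular) closure, which is not automatic. You can sidestep this entirely. Once $\psi|_Z\colon Z\to B_Y$ is known to be \'etale and $\lambda$-equivariant, argue directly that it is an isomorphism: its image is open and $\lambda$-stable and contains $0$, hence (by contraction on $B_Y$) is all of $B_Y$, so $\psi|_Z$ is surjective; and it is injective, because if $\psi(z_1)=\psi(z_2)$ then $\lambda(t)z_1,\lambda(t)z_2\to p$ as $t\to 0$, so for small $t$ both lie in a neighbourhood of $p$ on which $\psi$ is an analytic isomorphism, forcing $\lambda(t)z_1=\lambda(t)z_2$ and hence $z_1=z_2$. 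An \'etale bijective morphism is an isomorphism, so $\psi|_Z\colon Z\xrightarrow{\sim}B_Y$, and the decomposition into components gives the trivial covering. This removes the only delicate point in your write-up and makes your approach, in our view, somewhat cleaner than the equivariant-compactification argument the paper uses.
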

\begin{proof}
The map $q_Y\colon \mathcal Y \to B_Y$ is smooth, hence it is surjective on tangent spaces. The action of $\rho$ is trivial on $B_Y$, and if $x \in Y^\rho$ then $dq_Y$ must induce an isomorphism on the $\rho$-fixed subspace of $T_x\mathcal Y$ (it must be surjective, and the kernel of $dq_Y$ is the subspace $T_xY$, so since $x$ is an isolated fixed point of the $\rho$-action on $Y$, it must also be injective). The dimension of the $\rho$-fixed subspace in the tangent space of $\mathcal Y^\rho$ is locally constant, and hence because the contracting $\mathbb G_m$ action contracts $\mathcal Y$ to a smooth subvariety of $Y$, it must be the case that $\mathcal Y^\rho$ has $dq_{Y|\mathcal Y^{\rho}}$ an isomorphism everywhere. Thus $q_Y$ restricts to an \'etale map on $\mathcal Y^\rho$. 

By taking an equivariant compactification, one sees by the same argument that the fixed locus on the compactified locus is proper and \'etale, hence finite. If the restriction to the original locus was not finite, it can only be because the fibre dimension is non-constant. But it is known (see the next Lemma for more details) that the map $\mathcal Y \to H^2(Y,\C)$ is a smooth $C^\infty$-trivial fibration, and since the number of fixed points in a fibre $\mathcal Y_t$ is equal to the Euler characteristic of $\mathcal Y_t$, it follows that the fibre multiplicities are constant, and hence $q_Y$ is finite as required. Since $H^2(Y,\C)$ is simply-connected, it follows immediately that it is a trivial covering.
\end{proof}

Let $\rho_c\colon U(1)\to \text{Aut}(Y)$ be the restriction of the $\mathbb G_m$ action to the circle $U(1)$. 

\begin{lemma}
The map $\mathcal Y \to H^2(Y,\C)$ is a smooth $\rho_c$-equivariantly trivial fibration (where $U(1)$ acts trivially on the base).
\end{lemma}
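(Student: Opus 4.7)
The plan is to produce a $U(1)$-invariant Ehresmann connection on $\psi_Y\colon\mathcal Y\to H^2(Y,\mathbb C)$ and to integrate the horizontal lifts of constant vector fields on the base to obtain a $\rho_c$-equivariant diffeomorphism $\mathcal Y\cong Y\times H^2(Y,\mathbb C)$ over $H^2(Y,\mathbb C)$.

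First, note that $\psi_Y$ is a smooth surjective submersion (by universality of the Poisson deformation and the construction of $B_Y$), so the tangent sequence
\[
0 \to T_{\mathcal Y/H^2(Y,\mathbb C)} \to T\mathcal Y \to \psi_Y^*TH^2(Y,\mathbb C) \to 0
\]
splits smoothly by any choice of Ehresmann connection $H\subset T\mathcal Y$. The next step is to produce a \emph{complete} such connection, so that horizontal lifts of constant vector fields on $H^2(Y,\mathbb C)$ integrate to a global diffeomorphism; non-properness of $\psi_Y$ prevents a direct appeal to Ehresmann's theorem. To get around this, I would invoke Sumihiro's theorem to construct a $(\lambda,\rho)$-equivariant projective compactification $\overline{\mathcal Y}$ of $\mathcal Y$ fibered over $H^2(Y,\mathbb C)$, and then choose $H$ on $\overline{\mathcal Y}$ via a partition of unity so as to be tangent to every stratum of a $(\lambda,\rho)$-invariant Whitney stratification of $\overline{\mathcal Y}\smallsetminus \mathcal Y$. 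By properness of $\overline{\mathcal Y}\to H^2(Y,\mathbb C)$ (which holds because $\lambda$ has positive weights on the base, contracting everything toward the central fibre, and the fibres of the compactification are projective), the horizontal lifts of constant vector fields on the base are complete; the tangency condition ensures their flow preserves $\mathcal Y$.

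Finally, to make everything $\rho_c$-invariant, average $H$ over the compact group $U(1)$:
\[
H' := \int_{z\in U(1)} (\rho_c(z))_* H\, dz.
\]
Since $\rho_c$ preserves the chosen stratification and acts trivially on the base, $H'$ is a smooth $\rho_c$-invariant Ehresmann connection on $\overline{\mathcal Y}$ still tangent to the boundary strata, so the horizontal lifts of constant vector fields remain complete and preserve $\mathcal Y$. Their simultaneous flow out of the central fibre $Y$ then yields the desired $\rho_c$-equivariant trivialization $\mathcal Y\cong Y\times H^2(Y,\mathbb C)$.

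The main obstacle is the non-properness of $\psi_Y$, which is what forces the detour through the equivariant compactification and the need to select a connection tangent to the boundary strata; once this is in place, $\rho_c$-equivariance follows formally from compactness of $U(1)$ and the averaging argument. A minor technical point is the existence of a $(\lambda,\rho)$-invariant Whitney stratification of $\overline{\mathcal Y}\smallsetminus\mathcal Y$ adapted to boundary-tangent horizontal distributions, but this is standard: any algebraic stratification can be refined to a Whitney one, and since both torus actions are algebraic and commuting, the refinement can be taken invariant under both.
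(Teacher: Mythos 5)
Your route is genuinely different from the paper's: the paper simply invokes the Thom--Mather isotopy theorems (with the second isotopy theorem, or equivalently averaging over a compact group, giving equivariance), whereas you try to manufacture a global trivialization by hand via an equivariant compactification together with a horizontal distribution tangent to the boundary. You have identified the correct main obstacle (non-properness of $\psi_Y$) and the averaging argument for $\rho_c$-equivariance is fine. However, there is a genuine gap in the middle of the argument.

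The problematic step is the claim that you can choose, via a partition of unity, a \emph{smooth} Ehresmann connection $H$ on $\overline{\mathcal Y}$ that is simultaneously transverse to the fibres of $\overline{\mathcal Y}\to H^2(Y,\mathbb C)$ and tangent to every stratum of a Whitney stratification of the boundary $\overline{\mathcal Y}\smallsetminus\mathcal Y$. Such a smooth distribution does not exist in general: at a lower-dimensional stratum $S_1$ lying in the closure of a higher-dimensional stratum $S_2$, the constraint $H_p\subset T_pS_1$ on $S_1$ together with $H_q\subset T_qS_2$ on $S_2$ must be reconciled with smoothness of $H$ across $S_1$, and the Whitney conditions are only strong enough to guarantee this \emph{continuously}, not smoothly. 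This is exactly the technical issue that the Thom--Mather theory of controlled vector fields is designed to circumvent; the price paid there is that the resulting horizontal vector fields are non-smooth (merely continuous and ``controlled'' with respect to tubular data), and their flows produce a \emph{topological} trivialization rather than a $C^\infty$ one. Your sentence invoking a partition of unity elides precisely the content of the first isotopy theorem. To make your argument go through as written, you would either need to (a) replace the smooth connection by the controlled-vector-field machinery and settle for topological triviality, which is all that is needed for the application (constancy of Euler characteristics of fibres) and is what the paper's citation of Thom delivers; or (b) arrange the equivariant compactification $\overline{\mathcal Y}$ to have simple normal crossings boundary relative to the base (using equivariant resolution of singularities rather than just Sumihiro), in which case smooth vector fields tangent to all boundary strata do exist locally and can be glued.

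A secondary, more minor imprecision: the properness of $\overline{\mathcal Y}\to H^2(Y,\mathbb C)$ has nothing to do with $\lambda$ having positive weights on the base; it follows directly once you arrange the compactification to be projective over the base. The $\mathbb G_m$-weights on the base are irrelevant to properness of the morphism.
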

\begin{proof}
The fact that the deformation $\mathcal Y\to H^2(Y,\C)$ is topologically trivial is noted in \cite{Na2}. One can prove it using the isotopy theorems of Thom. Moreover, the second isotopy theorem ensures that the trivialization can be made equivariant for the action of a compact group $K$. 

Note that in particular this implies, since the fixed points of the $\mathbb G_m$ action coincide with those of the $U(1)$ action, that the fixed point locus  $Y^{\rho}$ is a trivial fibration. 
\end{proof}

\begin{example}
Let $Y = T^*\mathcal B$ be the flag variety for a reductive group $G$ and $X = \mathcal N$ the nilpotent cone. It is well known that the moment map $\mu\colon Y \to X$ is a symplectic resolution.  If $T$ is a maximal torus of $G$ and $\rho\colon \mathbb G_m \colon T$ is a one-parameter subgroup, then the induced action of $T$ on $Y$ is Hamiltonian, so $\rho$ gives a Hamiltonian $\mathbb G_m$-action on $Y$. Let $L = G^\rho$ be the subgroup of $G$ fixed by the action of $\rho$. It is a Levi subgroup of $G$. The fixed point locus of the $\rho$-action is a disjoint union of varieties isomorphic to $T^*\mathcal B_L$ where $\mathcal B_L$ is the flag variety of $L$. The connected components are in bijection with $W/W_L$ where $W$ is the Weyl group of $G$ and $W_L$ is the Weyl group of $L$.  In this case the restriction of the moment map is in fact a symplectic resolution.
\end{example}

We now restrict again to the case where $\rho$ has finitely many fixed points. Note that in this case we have a permutation action of $W$ on the fixed point loci $Y_t^{\rho}$ due to the trivialization given by Lemma \ref{finite fixed points etale}. Let us denote by $P$ the corresponding permutation representation of $W$. We claim that the representation of the symplectic Galois group on $H^*(\pi^{-1}(0))$ is isomorphic to $P$. The strategy of the proof is to use localization theory in the equivariant cohomology of the $\rho$-action and the nearby cycles description of the Springer sheaf. The nearby cycles functor readily lifts to the equivariant derived category, and in the equivariant cohomology of the fixed points, the family $\mathcal Y^{\rho}\to H^2(Y,\C)$ is trivial, with the generic action of $W$ being given by the permutation action.

Fix a Hermitian metric on $\mathcal X$ which is invariant under the action of $U(1)$ the maximal compact subgroup of $\rho$, the Hamiltonian $\mathbb G_m$-action (since $X$ is affine this can be done globally), and a $W$-invariant metric on $H^2(Y,\C)$, so that it descends to give a metric on $B_X = H^2(Y,\C)/W$. The \textit{Milnor fibre} of the map $g\colon \mathcal X \to B_X$ at a point $x \in X$ is $F_{x,\epsilon} = B(x,\epsilon)\cap g^{-1}(v)$ where $d(0,v)\ll\epsilon \ll1$.

\begin{lemma}
The stalk of the Springer sheaf, as a $U(1)$-equivariant sheaf, at $x \in X$ is $H^*_{U(1)}(F_{x,\epsilon})$.
\end{lemma}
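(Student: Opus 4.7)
The plan is to combine the nearby cycles description of $\mathsf{Spr}$ from Theorem \ref{nearby cycles2} with the standard Milnor fibre computation of stalks of a nearby cycles sheaf, carrying everything out in the $U(1)$-equivariant derived category.

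First, I would invoke Theorem \ref{nearby cycles2} to identify $\mathsf{Spr}\cong \mathcal P = \psi_{\theta_X}(k_{\mathcal X'})$ and check that this identification respects the $U(1)$-equivariant structure on both sides. Since the $\rho$-action is Hamiltonian, the first lemma of this section shows that it extends to $\mathcal Y$ and acts trivially on $B_Y$; descending through $\tilde\pi$ and $q$ yields a $U(1)$-action on $\mathcal X$ that is trivial on $B_X$. Hence $\pi$, $\tilde\pi$ and $\theta_X$ are all $U(1)$-equivariant (with trivial action on $B_Y$ and $B_X$), and every operation used in the proof of Theorem \ref{nearby cycles2}---restriction to a line $L \subset H^2(Y,\mathbb C)$ (which is pointwise $U(1)$-fixed), proper pushforward along $\tilde\pi$, and one-variable nearby cycles along $\theta_C$---is $U(1)$-equivariant. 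So the isomorphism $\mathsf{Spr}\cong \mathcal P$ lifts canonically to the $U(1)$-equivariant derived category.

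Next, I would apply the classical identification of the stalk of a nearby cycles sheaf with the cohomology of the Milnor fibre. To upgrade this to the equivariant setting, I use the $U(1)$-invariance of the Hermitian metric on $\mathcal X$ (available because $X$ is affine and $U(1)$ compact) and the $W$-invariant metric on $H^2(Y,\mathbb C)$, on which $U(1)$ acts trivially. Thus the ball $B(x,\epsilon)$ and any chosen regular value $v$ near $0$ are $U(1)$-stable, so $F_{x,\epsilon}=B(x,\epsilon)\cap \theta_X^{-1}(v)$ carries a natural $U(1)$-action, and the tubular-fibration construction of the nearby cycles stalk is $U(1)$-equivariant. Passing to $U(1)$-equivariant cohomology then yields
\[
\mathsf{Spr}_x \cong H^*_{U(1)}(F_{x,\epsilon}),
\]
as required.

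The main obstacle is checking the equivariant refinement of the Milnor fibre stalk computation for nearby cycles. Given the $U(1)$-invariant tube and a $U(1)$-stable regular value supplied by the above metric choices, the equivariant version of Thom's second isotopy theorem, applied as in the $\rho_c$-equivariant trivialisation lemma earlier in this section, produces the needed $U(1)$-equivariant trivialisation of the restricted fibration; the rest of the equivariant stalk computation then follows formally from standard properties of the equivariant derived category.
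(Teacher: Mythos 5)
Your proposal is correct and follows essentially the same route as the paper: identify $\mathsf{Spr}$ with the nearby cycles sheaf $\mathcal P$ via Theorem \ref{nearby cycles2}, then use the Milnor-fibre description of nearby cycles stalks together with the $U(1)$-invariance of the Hermitian metric to make the Milnor fibre a $U(1)$-space. The paper's own proof is essentially a two-sentence version of this; you have simply spelled out the equivariance checks (that $\tilde\pi$, $\theta_X$, the restriction to $L$, and the trivialisation are all $U(1)$-equivariant) that the paper leaves implicit.
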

\begin{proof}
By Theorem \ref{nearby cycles2}, the Springer sheaf is given by a nearby cycles construction, and hence the Lemma follows directly from the construction of the nearby cycles functor. Note that as we have chosen our metric to be $U(1)$-invariant, so that $F_{L,\epsilon}$ is indeed a $U(1)$-space.
\end{proof}

\begin{theorem}
Suppose that the Hamiltonian $\mathbb G_m$ acts with finitely many fixed points. Then the symplectic Galois group $W$ permutes the fixed points in a generic deformation $\mathcal X_t$ and the cohomology $H^*(X)$ has a filtration whose associated graded is isomorphic as a $W$-representation to the permutation representation given by this action.
\end{theorem}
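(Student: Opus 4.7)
The plan is to combine the nearby cycles description of $\mathsf{Spr}$ from Theorem \ref{nearby cycles2} with $U(1)$-equivariant Atiyah--Bott localization, where $U(1) \subset \rho(\mathbb G_m)$ is the compact form of the Hamiltonian action. Since $\rho$ commutes with $\lambda$, with the Poisson deformation, and with the nearby-cycles monodromy, the entire construction producing the $W$-action on $\mathsf{Spr}$ refines canonically to the $U(1)$-equivariant derived category. In particular the stalk $\mathsf{Spr}_0 \cong H^*(Y)$ (via $\lambda$-contraction) lifts to $H^*_{U(1)}(Y)$, a $W$-representation over $k[t] = H^*_{U(1)}(\text{pt})$. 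The $W$-action is $k[t]$-linear: indeed $\rho$ acts trivially on $B_Y$, so the equivariant parameter is $W$-invariant.

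Because $Y^\rho$ is finite, the Bialynicki--Birula decomposition for $\rho$ exhibits $Y$ as an iterated union of affine attracting cells indexed by $Y^\rho$. This shows $H^*(Y)$ is concentrated in even degree with a basis labelled by $Y^\rho$, and $H^*_{U(1)}(Y)$ is a free $k[t]$-module of rank $|Y^\rho|$. The BB cell decomposition, ordered by the moment map for $\rho$, yields a filtration $F_\bullet$ on $H^*(Y)$ whose associated graded is canonically isomorphic, as a graded vector space, to $k[Y^\rho]$.

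By Atiyah--Bott localization, the restriction map gives a $W$-equivariant isomorphism
\begin{displaymath}
H^*_{U(1)}(Y)[t^{-1}] \xrightarrow{\sim} H^*_{U(1)}(Y^\rho)[t^{-1}] = k[Y^\rho] \otimes_k k[t,t^{-1}],
\end{displaymath}
and the $W$-action on the right-hand side, being $k[t]$-linear, acts on the coefficient space $k[Y^\rho]$. To identify this action as the permutation action, I would use Lemma \ref{finite fixed points etale}: the fixed-point family $\mathcal Y^\rho \to B_Y$ is a trivial étale cover, so its sheets are canonically labelled by the points of $Y^\rho$, and the nearby-cycles monodromy of $W$ on a generic fibre corresponds to the deck transformations permuting these sheets, which is to say it permutes the fixed points of $\mathcal X_{\bar t}^\rho$ as asserted in the theorem statement.

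The main obstacle I anticipate is verifying that the BB filtration on $H^*(Y)$ is $W$-stable and that its associated graded realises the permutation action. The strategy is to compare the Morse-theoretic BB filtration with the filtration on $H^*_{U(1)}(Y)$ obtained by pulling back the $t$-adic order filtration on $H^*_{U(1)}(Y^\rho) = k[Y^\rho][t]$ along the localization embedding: the latter filtration is manifestly $W$-stable with permutation-action associated graded, while the classical fact that equivariant fundamental classes of BB cells form a $k[t]$-basis of $H^*_{U(1)}(Y)$ that is triangular with respect to the fixed-point basis, with unit leading coefficients, identifies the two filtrations after reduction modulo $t$.
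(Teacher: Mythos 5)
Your proposal is correct and follows the same overall strategy as the paper: refine the nearby-cycles/monodromy construction of the $W$-action to the $U(1)$-equivariant setting, apply localization to pass to the fixed-point family, and use the trivial \'etale cover $\mathcal Y^\rho\to B_Y$ of Lemma \ref{finite fixed points etale} to recognise the generic-fibre $W$-action as the permutation action.

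The one place where you diverge is the final step of producing the filtration. The paper works with the Milnor fibre $F_{x,\epsilon}$ rather than with $Y$ directly, invokes Kaledin's parity theorem (\cite[Theorem 2.12]{Ka}) to get evenness of $H^*$, and then extracts the filtration from the Leray spectral sequence of $(F_{x,\epsilon})_{hU(1)}\to BU(1)\times B_X^{\text{reg}}\to B_X^{\text{reg}}$, which degenerates at $E_2$ by parity. You instead propose the Bialynicki--Birula decomposition for $\rho$, which has the pleasant side effect of giving evenness and freeness of $H^*_{U(1)}(Y)$ over $k[t]$ without citing Kaledin. Both routes are legitimate. However, the ``main obstacle'' you raise — that the BB filtration per se might not be $W$-stable — is a genuine concern, and your proposed fix is somewhat more elaborate than necessary: you do not actually need the BB filtration at all. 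Once you know $H^*_{U(1)}(Y)$ is a free $k[t]$-module injecting $W$-equivariantly into $k[Y^\rho][t]$ with torsion cokernel, the $t$-adic (equivalently, cohomological-degree) filtration on $k[Y^\rho][t,t^{-1}]$ restricted to the image of $H^*_{U(1)}(Y)$ already has graded pieces canonically identified with the graded pieces of $H^*(Y)$, giving the asserted filtration directly; the BB comparison is what the paper's Leray spectral sequence degeneracy replaces. Note also that both arguments, as stated, yield a filtration of $P$ with associated graded $H^*(Y)$ rather than the other way round; passing between the two uses semisimplicity of $k[W]$ in characteristic zero, a step that the paper also leaves implicit.
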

\begin{proof}
If the Hamiltonian $\mathbb G_m$-action $\rho$ has finitely many fixed points on $Y$, then we have seen that fixed point locus $\mathcal Y^{\rho}$ is a trivial finite cover of $H^2(Y,\mathbb C)$. Moreover, since the action of $\rho$ is compatible with the deformation it follows that the symplectic Galois group action commutes with the action of $\rho$ and so acts on $\mathcal Y^\rho$, hence the triviality of the covering $\mathcal Y^\rho\to H^2(Y,\C)$ yields a permutation action of $W$ on the sheets of the covering. 

Now the action of the symplectic Galois group on the stalks of the Springer sheaf is, via the nearby cycles description, given by the monodromy action coming from the local triviality of the Milnor fibre fibration over the regular locus in $B_X$. To prove the theorem we use localization theory for the $U(1)$-action, following the strategy of \cite{Tr}. Indeed we have a diagram:

\xymatrix{
& & & & F_{x,\epsilon}^{\rho} \ar[d] \ar[r] & F_{x,\epsilon} \ar[d] \\
& & & & H^2(Y,\C)^{\text{reg}} \ar[r] & B_X^{\text{reg}}
}

\noindent
where the lower arrow is the quotient map. This diagram induces a corresponding diagram where we replace the upper row with the corresponding Borel constructions for the $U(1)$-action (denoting by $X_{hU(1)}$ the Borel space associated to $X$):

\xymatrix{
& & & & (F_{x,\epsilon}^{\rho})_{hU(1)} \ar[d]^\psi \ar[r] & (F_{x,\epsilon})_{hU(1)} \ar[d]^\phi \\
& & & & H^2(Y,\C)^{\text{reg}} \ar[r]^\iota & B_X^{\text{reg}}
}

\noindent
The proof then proceeds as in \cite{Tr}: If we let $\mathcal F_\rho = \bigoplus_{i \in \mathbb Z} R^i\psi_*(\mathbb Q)$ and $\mathcal F = \bigoplus_{i \in \mathbb Z} R^i\phi_*(\mathbb Q)$, then each are locally constant sheaves with fibres isomorphic to the $U(1)$-equivariant cohomology of $F_{x,\epsilon}^{\rho}$ and $F_{x,\epsilon}$ respectively. Fixing an isomorphism of the rational cohomology $H^*(BU(1),\mathbb Q) \cong \mathbb Q[t]$, each of $\mathcal F$ and $\mathcal F_\rho$ are sheaves of $\mathbb Q[t]$-modules. It now follows from the localization theorem for $U(1)$-equivariant cohomology that we have an isomorphism:
\[
\mathcal F\otimes_{\mathbb Q[t]} \mathbb Q[t,t^{-1}] \cong \iota_*(\mathcal F_\rho)\otimes_{\mathbb Q[t]} \mathbb Q[t,t^{-1}]
\]
Now taking stalks, which we view as $W$-modules, this becomes the statement
\[
H^*_{U(1)}(F_{x,\epsilon},\mathbb Q)\otimes_{\mathbb Q[t]}\mathbb Q[t,t^{-1}] \cong  H^*(F_{x,\epsilon}^\rho,\mathbb Q)\otimes \mathbb Q[t,t^{-1}] \cong P\otimes  \mathbb Q[t,t^{-1}]
\]
(where on the left-hand side we have used the fact that $\rho$ acts trivially on $F_{x,\epsilon}^\rho$). Consider the right-hand side of this isomorphism.  We have a Leray spectral sequence in the category of locally constant sheaves associated to the composition:
\[
(F_{x,\epsilon})_{hU(1)} \to BU(1)\times B_X^{\text{reg}} \to B_X^{\text{reg}}.
\]
The stalks of the sheaves on the $E_2$-page are 
\[
E_2^{ij} = H^i(BU(1),\mathbb Q) \otimes H^j(F_{x,\epsilon},\mathbb Q).
\]
However, it is known that by \cite[Theorem 2.12]{Ka} the cohomology of the fibres of a symplectic resolution $\pi\colon Y\to X$ are nonzero only in even degrees. This ensures that the cohomology of $F_{x,\epsilon}$ vanishes in odd degrees and so the spectral sequence collapses at the $E_2$-page. 

Thus it follows that 
\[
H_{U(1)}^*(F_{x,\epsilon},\mathbb Q)\otimes_{\mathbb Q[t]} \mathbb Q[t,t^{-1}] \cong P\otimes  \mathbb Q[t,t^{-1}]
\]
admits a filtration whose associated graded is $E^{ij}_2\otimes \mathbb Q[t,t^{-1}]$. The conclusion of the theorem then follows by taking the $i+j=0$ line of the $E_2$-page.
\end{proof}

\section{Slices in the affine Grassmannian}

The idea that slices in the affine Grassmannian should have a kind of Springer theory associated to them goes back to work of Ngo, \cite[\S 2]{Ngo}, and was studied in the context of knot invariants by Kamnitzer in \cite{K}. In particular Section $2$ of \cite{K} contains many of the results of this section. Our main contribution is to compare the Beilinson-Drinfel'd deformation with the universal conic deformation and put the work of \cite{K} into our general context. 

\subsection{Background on Slices}

In this section we review some facts about slices in the affine Grassmannian. For more details see \cite{KWWY} and \cite{K}.

Let $G$ be a reductive algebraic group over $\mathbb C$. We write $G \llsb t^{\pm 1} \rrsb$ or $G \llb t^{\pm 1} \rrb$ for the $\mathbb C\llsb t^{\pm 1}\rrsb$ or $\mathbb C\llb t^{\pm 1}\rrb$ points of $G$, and similarly we write $G[t]$ for the $\mathbb C[t]$-points of $G$. The \textit{thick affine Grassmannian} is the scheme $\Gr = G\llb t^{-1} \rrb/G[t]$, which contains the thin affine Grassmannian via the maps
\[
G\llb t \rrb /G[\![t]\!] \cong G[t, t^{-1}]/G[t] \hookrightarrow G \llb t^{-1} \rrb/G[t].
\]

\noindent
A coweight $\lambda$ for $G$ may be viewed as an element of $G[t,t^{-1}]$ and hence yields a point in the thin affine Grassmannian, which we denote by $t^\lambda$. Set, for $\lambda$ and $\mu$ dominant coweights of $G$,
\[
\Gr^{\lambda} = G[t].t^\lambda, \quad \Gr_{\mu} = G_1[[t^{-1}]].t^{w_0\mu}
\] 
where $G_1\llsb t^{-1}\rrsb$ is the kernel of the natural homomorphism $G\llsb t^{-1}\rrsb \to G$ given by evaluation at $t^{-1}=0$ and as usual $w_0$ denotes the longest element of the Weyl group of $G$. 

For $\lambda$ and $\mu$ as above with $\mu\leq \lambda$, let $\Gr_{\mu}^{\bar{\lambda}} =\overline{\Gr^{\lambda}} \cap \Gr_{\mu}$. This is an affine slice to the orbit $\Gr^{\mu}$ through the point $t^{w_0\mu}$, since $\Gr_\mu$ is transverse to every $\Gr^{\nu}$ and intersects $\Gr^{\mu}$ precisely at the point $t^{w_0\mu}$. It is known to have dimension $2 \langle \rho, \lambda-\mu\rangle$ (where $\rho$ as usual denotes the half-sum of positive roots).
There is a natural action of $\mathbb G_m$ on $\Gr$ by ``loop rotation''. This action preserves both $G[t]$- and $G\llb t^{-1} \rrb$-orbits, and hence preserves each $\Gr^{\lambda}_\mu$.

\begin{lemma}
Let $\lambda$ and $\mu$ be dominant coweights of $G$ such that $\mu\leq \lambda$. Then the variety $\Glm$ is a conical symplectic variety in the sense of Kaledin \cite{Ka}. Moreover, its symplectic leaves are $\Gr^{\lambda}_\mu = \Gr^\lambda\cap \Gr_\mu$.
\end{lemma}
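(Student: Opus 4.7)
The plan is to verify Kaledin's axioms for a conical symplectic variety applied to $\Glm$ --- normality, existence of an algebraic symplectic form on the smooth locus whose pull-back extends to any resolution, and a $\mathbb G_m$-action that contracts $\Glm$ to a cone point and rescales the symplectic form with positive weight --- and then identify the symplectic leaves as the intersections $\Gr^\nu \cap \Gr_\mu$ for $\mu \leq \nu \leq \lambda$. For the underlying algebro-geometric input, I would invoke the fact that $\Glm$ is a normal affine Cohen--Macaulay variety with rational Gorenstein singularities: normality and Cohen--Macaulayness descend from the good-filtration and Frobenius-splitting properties of affine Schubert varieties, and the stronger statement that $\Glm$ has symplectic singularities in the sense of Beauville is essentially contained in Kamnitzer--Webster--Weekes--Yacobi \cite{KWWY} and Kamnitzer \cite{K}. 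Given this, it suffices to produce a closed non-degenerate $2$-form on the smooth locus.

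For the Poisson (generically symplectic) structure, I would use the Manin-triple / Poisson--Lie picture recalled in \cite[\S 2]{K}: the decomposition $\mathfrak g \llb t \rrb \oplus \mathfrak g \llb t^{-1} \rrb \supset \mathfrak g[t,t^{-1}] + t^{-1}\mathfrak g \llsb t^{-1} \rrsb$ gives $G\llb t^{-1}\rrb$ a Poisson--Lie structure for which $G_1\llsb t^{-1}\rrsb$ is coisotropic, and hence the orbit $\Gr_\mu = G_1\llsb t^{-1}\rrsb \cdot t^{w_0\mu}$ inherits a canonical Poisson structure that restricts to the closed subvariety $\Glm$. On the open stratum $\Gr^\lambda \cap \Gr_\mu$ the bivector is non-degenerate because this intersection is the transverse meeting of the symplectic $G[t]$-orbit $\Gr^\lambda$ with the transverse slice $\Gr_\mu$.

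For the conical structure I would take the $\mathbb G_m$-action combining loop rotation $t \mapsto st$ with the inner automorphism by the cocharacter $(w_0\mu)^{-1}\colon \mathbb G_m \to T \subset G$, chosen so that the point $t^{w_0\mu}$ is fixed and the induced action on its tangent space has strictly positive weights; the symplectic form then rescales with positive weight equal to the loop-rotation weight. Finally, the stratification $\Glm = \bigsqcup_{\mu \leq \nu \leq \lambda} (\Gr^\nu \cap \Gr_\mu)$ by $G[t]$-orbit type consists of smooth, locally closed pieces on which the Poisson structure is non-degenerate (by the same transverse-intersection argument as above). Since $G[t]$ acts by Poisson automorphisms preserving the stratification, each stratum is a union of symplectic leaves, and a dimension count identifies it with a single leaf.

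The main obstacle is the construction and non-degeneracy verification in the second step: the Manin-triple Poisson structure has to be set up carefully in the thick Grassmannian, and one must check both that it restricts to $\Gr_\mu$ and that its restriction is non-degenerate on each transverse intersection $\Gr^\nu \cap \Gr_\mu$. All of these ingredients are essentially present in \cite{K, KWWY}, so in practice the proof reduces to careful assembly and citation rather than fresh construction, consistent with the paper's own framing of this section as comparison with previously known results rather than original proof of the symplectic structure.
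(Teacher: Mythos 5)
Your approach matches the paper's almost exactly: both rest on the results of \cite{KWWY} (specifically Theorems 2.5 and 2.7), derive the Poisson structure from the Manin triple $(\gf\llb t^{-1}\rrb, \gf[t], \gf_1\llsb t^{-1}\rrsb)$, and use loop rotation for the conical $\mathbb{G}_m$-structure. The one thing worth flagging is that the extra twist by the cocharacter $(w_0\mu)^{-1}$ you introduce is unnecessary: loop rotation already fixes the coset $t^{w_0\mu}G[t]$, since $s^{\pm w_0\mu}$ lies in the constant torus $T\subset G[t]$ and commutes with $t^{w_0\mu}$, so $(s^{-1}t)^{w_0\mu}G[t]=s^{-w_0\mu}t^{w_0\mu}G[t]=t^{w_0\mu}G[t]$, and likewise loop rotation contracts $G_1\llsb t^{-1}\rrsb$ to the identity. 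Composing with the inner cocharacter is harmless but complicates the weight bookkeeping for the symplectic form without any gain, and the justification you give (``chosen so that the point $t^{w_0\mu}$ is fixed'') is based on a false premise. Everything else — the Manin-triple construction, the transversality argument for nondegeneracy of the bivector on $\Gr^\nu\cap\Gr_\mu$, and identifying those intersections as the leaves via $G[t]$-orbit type — is in line with the paper's proof.
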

\begin{proof}
This is contained in Theorems 2.5 and 2.7 of \cite{KWWY}. The existence of the Poisson structure is a consequence of the the fact that $(\gf\llb t^{-1} \rrb,\gf[t], \gf_1\llsb t^{-1}\rrsb)$ is a Manin triple (where of course $\gf_1\llsb t^{-1}\rrsb = t^{-1}\gf\llsb t^{-1}\rrsb$) with the pairing between $\gf[t]$ and $t\gf\llsb t^{-1} \rrsb$ being given by the residue of the Killing form. The loop rotation action preserves $\Glm$ and contracts it to the unique fixed point $t^{w_0\mu}$, and its compatibility with the Manin triple described above ensures it makes $\Glm$ into a conic symplectic variety as claimed (acting with weight $-1$ on the Poisson bracket).
\end{proof}

The symplectic singularities $\Glm$ do not always possess a symplectic resolution, but there is always a $\mathbb Q$-factorial terminalization which can be described explicitly, which is therefore a resolution in the cases where such resolutions exist. Indeed given $\lambda$, assuming (as we shall) that $G$ is of adjoint type, we may pick a sequence $\vec{\lambda}= (\lambda_1,\ldots,\lambda_n)$ of fundamental coweights such that $\lambda = \lambda_1+ \ldots \lambda_n$. 

Recall that the convolution product of subsets $A$ and $B$ in $\Gr$ is defined to be $A\tilde{\times} B = p^{-1}(A)\times_{G[t]}B$ where $p\colon G\llb t^{-1}\rrb\to \Gr$ is the natural projection map. The variety $\overline{\Gr^{\vec{\lambda}}}$ is then defined to be $\overline{\Gr^{\lambda_1}}\tilde{\times} \ldots \tilde{\times} \overline{\Gr^{\lambda_n}}$, which is naturally equipped with a projection map $m \colon \overline{\Gr^{\vec{\lambda}}}\to \overline{\Gr^{\lambda}}$. If we set $\Gr^{\bar{\vec{\lambda}}}_\mu = m^{-1}(\Gr_\mu)$ then $\Gr^{\bar{\vec{\lambda}}}_\mu$ is a $\mathbb Q$-factorial terminalization of $\Gr_\mu^{\lambda}$, and therefore, by results of Namikawa, a symplectic resolution of $\Gr^{\bar{\lambda}}_\mu$ whenever such a resolution exists. Provided the fundamental coweights $\lambda_i$ are minuscule as weights for the dual group $G^\vee$, the orbits $\Gr^{\lambda_i}$ are actually closed, and hence the convolution product $\overline{\Gr^{\vec{\lambda}}}$ is smooth. A refinement of this observation shows the following:

\begin{theorem}
\label{resolutions for slices}\cite[Theorem 2.9]{KWWY}.
The variety $\Glm$ has a symplectic resolution precisely when there do not exist coweights $\nu_1,\ldots,\nu_k$ such that $\nu_1+\ldots+\nu_n = \mu$ and for all $k$, $\nu_k$ is a weight of $V(\lambda_k)$ (the irreducible representation of highest weight $\lambda_k$ for the dual group $G^\vee $) and for some $k$, $\nu_k$ is not an extremal weight of $V(\lambda_k)$.
\end{theorem}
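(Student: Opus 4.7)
The plan is to combine Namikawa's characterisation of conical symplectic singularities that admit symplectic resolutions with a direct smoothness analysis of the candidate resolution $\Gr^{\bar{\vec{\lambda}}}_\mu$. First I would invoke Namikawa's theorem: a conical symplectic variety admits a projective symplectic resolution if and only if any (equivalently, every) $\mathbb{Q}$-factorial terminalisation is smooth. Since the excerpt recalls that $\Gr^{\bar{\vec{\lambda}}}_\mu$ is a $\mathbb{Q}$-factorial terminalisation of $\Glm$, the problem reduces to deciding when $\Gr^{\bar{\vec{\lambda}}}_\mu$ is itself smooth, and, separately, showing that this smoothness condition does not depend on the choice of ordered decomposition $\vec{\lambda}$ (this last point uses the fact that distinct choices of $\vec{\lambda}$ produce $\mathbb{Q}$-factorial terminalisations related by iterated Mukai flops, each of which preserves smoothness or non-smoothness).

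Next I would analyse the convolution variety $\overline{\Gr^{\vec{\lambda}}}$ as an iterated twisted product whose $k$-th factor is $\overline{\Gr^{\lambda_k}}$. A point is singular exactly when at some step $k$ it lies in the singular locus of that factor, which is $\bigsqcup_{\lambda_k' < \lambda_k} \Gr^{\lambda_k'}$ and is empty iff $\lambda_k$ is minuscule as a weight for $G^\vee$. Because $\Gr_\mu$ is transverse to each $G[t]$-orbit, the restriction $\Gr^{\bar{\vec{\lambda}}}_\mu = m^{-1}(\Gr_\mu)$ inherits singularities pointwise: smoothness of $\Gr^{\bar{\vec{\lambda}}}_\mu$ is equivalent to the non-existence of a point of $\Gr^{\bar{\vec{\lambda}}}_\mu$ whose image in some factor $\overline{\Gr^{\lambda_k}}$ lies in a strict stratum $\Gr^{\lambda_k'}$ with $\lambda_k' < \lambda_k$.

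Third, I would translate this geometric condition into the stated weight-theoretic one using Mirkovi\'c--Vilonen theory. A generic one-parameter subgroup in a maximal torus $T \subset G$ produces a Bia\l ynicki-Birula decomposition of $\overline{\Gr^{\vec{\lambda}}}$; restricted to $\Gr^{\bar{\vec{\lambda}}}_\mu$ the attracting cells are parametrised precisely by tuples $(\nu_1,\dots,\nu_n)$ with $\nu_k$ a weight of $V(\lambda_k)$ and $\sum_k \nu_k = \mu$. This identification is essentially the content of the geometric Satake correspondence applied to the convolution side (the tuples index $T$-fixed points/MV cycles of the iterated fibre product). A given cell avoids the singular locus of $\overline{\Gr^{\vec{\lambda}}}$ if and only if, for every $k$, the $T$-fixed point $t^{\nu_k}$ lies in the open dense orbit $\Gr^{\lambda_k}$ of its factor, i.e.\ $\nu_k$ is extremal ($\nu_k \in W \cdot \lambda_k$). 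Putting everything together: $\Glm$ admits a symplectic resolution iff no tuple $(\nu_1,\dots,\nu_n)$ summing to $\mu$ with some non-extremal $\nu_k$ exists.

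The main obstacle I expect is the careful bookkeeping in step three, namely relating the $T$-weights $\nu_k$ of the MV-cycle parametrisation to the relative positions $\lambda_k' \leq \lambda_k$ in the factors of the convolution variety, and then verifying locally that a non-extremal $\nu_k$ forces the corresponding attracting cell to meet the strict stratum $\Gr^{\lambda_k'} \subset \overline{\Gr^{\lambda_k}}$ and hence the singular locus. Once this local-to-global smoothness criterion is established, both directions of the ``iff'' follow: the existence of a bad tuple produces a singular point (hence by Namikawa obstructs a symplectic resolution), while the absence of bad tuples makes $\Gr^{\bar{\vec{\lambda}}}_\mu$ smooth and hence a symplectic resolution.
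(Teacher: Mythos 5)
Your outline captures the essential strategy and is, in spirit, the proof given in \cite{KWWY} (the theorem is cited there, not proved in the present paper): invoke Namikawa's criterion that a conical symplectic singularity admits a projective symplectic resolution if and only if some (equivalently any) $\mathbb{Q}$-factorial terminalisation is smooth, observe that $\Gr^{\bar{\vec{\lambda}}}_\mu$ is such a terminalisation, and then test its smoothness at $T$-fixed points, which are indexed by the weight tuples $(\nu_1,\dots,\nu_n)$. A few points deserve sharpening. First, the Mukai-flop digression is unnecessary: Namikawa's theorem already makes the existence of a resolution insensitive to the choice of $\mathbb{Q}$-factorial terminalisation, and the weight-tuple criterion is manifestly independent of the ordering $\vec{\lambda}$, so no further well-definedness check is required. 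Second, the reduction from ``smooth everywhere'' to ``smooth at $T$-fixed points'' is the real crux and your sketch glosses over it; one needs that the singular locus of $\Gr^{\bar{\vec{\lambda}}}_\mu$ is closed, invariant under $T$ and loop rotation, and that a suitable cocharacter in $T\times\mathbb{G}_m^{\mathrm{rot}}$ contracts everything to the finite fixed-point set, so a nonempty singular locus must contain a fixed point. Third, the slice $\Gr_\mu$ has the unique $T$-fixed point $t^{w_0\mu}$, so the fixed points of $\Gr^{\bar{\vec{\lambda}}}_\mu$ are tuples with $\sum_k\nu_k=w_0\mu$, not $\mu$; this is harmless since applying $w_0$ to every coordinate is a bijection preserving ``$\nu_k$ is a weight of $V(\lambda_k)$'' and extremality. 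Fourth, you assert without proof that $t^{\nu_k}$ is a singular point of $\overline{\Gr^{\lambda_k}}$ whenever $\nu_k$ is a non-extremal weight of $V(\lambda_k)$; this is the known fact that $\overline{\Gr^{\lambda}}$ is singular along its entire boundary (see e.g.\ \cite{MOV}), and it should be cited or argued via the conical structure of the transverse slice. With these points filled in, your argument is a correct rendering of the intended proof.
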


\begin{remark}
We shall assume from now on that our variety $\Glm$ possesses a symplectic resolution. In this case, one can check that the preimage of $\Glm$ lies in the open convolution space $\Gr^{\lambda_1}\tilde{\times} \ldots \tilde{\times}\Gr^{\lambda_n}$, hence we will write $\Glmr$ for the resolution.
\end{remark}

\begin{remark}
The theorem shows that we must take $\lambda$ to be a sum of miniscule coweights, if we wish to ensure $\Glm$ always has a symplectic resolution. When $G$ is of type $A$, every fundamental weight is minuscule, hence this is automatic, but in general we see that the theorem gives a large supply of symplectic varieties which do not possess a symplectic resolution.
\end{remark}

Namikawa's recipe for computing the symplectic Galois group can be carried out in these cases thanks to the work of \cite[Theorem 5.2]{MOV}: 

\begin{lemma}
The codimension two strata of the symplectic variety $\Glm$ are those $\Gr^{\lambda}_\nu$ with $\nu$ dominant and $\nu = \lambda - \check{\alpha}_i\geq \mu$, for some fundamental coroot $\check{\alpha}_i$. Moreover, a transverse slice through this stratum is isomorphic to a Kleinian singularity of the form $A_{p_i-1}$ where $p_i=\langle \lambda_i,\lambda\rangle$.
\end{lemma}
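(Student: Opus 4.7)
The plan is to reduce the statement to two ingredients: first, a $\rho$-height dimension count to pick out the codimension two strata; second, an ``iterated slice'' identification of the transverse slices within $\Glm$ with smaller affine Grassmannian slices, to which the surface classification of \cite{MOV} can be applied.

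The symplectic leaves of $\Glm$ are, by the preceding lemma, the intersections $\Gr^\nu_\mu = \Gr^\nu \cap \Gr_\mu$, as $\nu$ ranges over dominant coweights with $\mu \leq \nu \leq \lambda$, and each has dimension $2\langle \rho, \nu - \mu \rangle$. Since $\dim \Glm = 2\langle \rho, \lambda - \mu\rangle$, the codimension of $\Gr^\nu_\mu$ in $\Glm$ equals $2\langle \rho, \lambda - \nu\rangle$. This equals $2$ precisely when $\lambda - \nu$ is a non-negative integral combination of simple coroots of $\rho$-height one, i.e.\ a single simple coroot $\check\alpha_i$. Dominance of $\nu = \lambda - \check\alpha_i$ amounts to $\langle \lambda, \alpha_i\rangle \geq 1$, i.e.\ $p_i \geq 1$, and $\nu \geq \mu$ is imposed so that the stratum lies in $\Glm$; together these recover exactly the list in the statement.

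For the second part, I would use the iterated slice structure of affine Grassmannian slices: the transverse slice inside $\Glm$ to the stratum $\Gr^\nu_\mu$ at the basepoint $t^{w_0 \nu}$ of its closure is canonically isomorphic to $\Gr^{\bar{\lambda}}_\nu = \overline{\Gr^\lambda}\cap \Gr_\nu$. This follows from the two transversality facts already used: $\Gr_\nu$ is transverse to every $G[t]$-orbit in the thick affine Grassmannian and meets $\Gr^\nu$ only in the point $t^{w_0\nu}$, so intersecting $\Gr_\nu$ with $\overline{\Gr^\lambda}\cap \Gr_\mu = \Glm$ yields a transverse slice to $\Gr^\nu\cap\Gr_\mu$ inside $\Glm$; $G[t]$-equivariance along the stratum makes the isomorphism type basepoint-independent.

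When $\lambda-\nu = \check\alpha_i$, the slice $\Gr^{\bar\lambda}_\nu$ is a two-dimensional conic symplectic singularity, hence a rational double point by the classical Du~Val classification. The precise type, namely $A_{p_i-1}$ with $p_i$ as stated, is the content of Theorem~5.2 of \cite{MOV}, which we invoke as a black box: its proof proceeds by an explicit $\mathfrak{sl}_2$-calculation transverse to $\check\alpha_i$, using the minuscule geometry to count the $p_i-1$ exceptional components. The main obstacle is thus the iterated slice identification, which must be argued carefully via the Manin triple structure used to define the Poisson brackets; once this is in hand, the surface classification is immediate from \cite{MOV}.
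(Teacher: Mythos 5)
Your dimension count for the first part is essentially what the paper intends when it combines $\dim(\Glm)=2\langle\rho,\lambda-\mu\rangle$ with Theorem~5.2 of \cite{MOV}. One minor slip: dominance of $\nu=\lambda-\check{\alpha}_i$ is equivalent to $\langle\alpha_i,\lambda\rangle\geq 2$, not $\geq 1$, since $\langle\alpha_i,\nu\rangle=\langle\alpha_i,\lambda\rangle-2$ while the pairings $\langle\alpha_j,\nu\rangle$ for $j\neq i$ are automatically nonnegative (compare the computation in the proof of Proposition~\ref{W for slices}, which requires $p_i>1$). The translation into a $p_i$ inequality is anyway not needed, since the lemma states the condition as dominance of $\nu$.

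Your iterated-slice argument for the second part has a genuine gap. You propose to realize the transverse slice to $\Gr^\nu_\mu$ inside $\Glm$ as $\Gr_\nu\cap\Glm=\Gr_\nu\cap\overline{\Gr^\lambda}\cap\Gr_\mu$. But $\Gr_\nu$ and $\Gr_\mu$ are distinct $G_1\llsb t^{-1}\rrsb$-orbits (through the distinct points $t^{w_0\nu}$ and $t^{w_0\mu}$), hence disjoint whenever $\nu\neq\mu$, so this proposed slice is empty. Relatedly, $t^{w_0\nu}$ is not a point of $\Gr^\nu_\mu$, nor even of $\Glm$, so it cannot serve as the basepoint of your slice. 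The correct form of the ``slices commute'' statement works at a point $x\in\Gr^\nu_\mu$: by $G[t]$-equivariance, $\overline{\Gr^\lambda}$ factors, analytically locally near $x$, as a product of $\Gr^\nu$ with a slice germ isomorphic to the germ of $\Gr^{\bar{\lambda}}_\nu$ at its cone point, and since $\Gr_\mu$ is transverse to the $G[t]$-orbits, intersecting with $\Gr_\mu$ yields a local product decomposition of $\Glm$ near $x$ as $\Gr^\nu_\mu\times\Gr^{\bar{\lambda}}_\nu$. This is the step the paper treats as implicit when it simply cites \cite[Theorem~5.2]{MOV}, which identifies the two-dimensional slices $\Gr^{\bar{\lambda}}_\nu$ for $\lambda-\nu$ a simple coroot.
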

\begin{proof}
This follows immediately from Theorem 5.2 of \cite{MOV} combined with the dimension formula $\dim(\Glm)=2\langle \rho,\lambda-\mu\rangle$.
\end{proof}

Once one has computed the codimensional $2$ strata, in order to calculate the symplectic Galois group one must compute the monodromy action of the fundamental group of the strata on the exceptional fibres of minimal resolutions of the codimension two strata. However, it is asserted in \cite{KWWY} that these strata are simply-connected, and hence the monodromy is trivial.

\begin{prop}
\label{W for slices}
Let $Y = \Gr^{\lambda}_0$, and assume that the conditions of Theorem \ref{resolutions for slices} hold. Then $Y$ has a symplectic resolution and if $\lambda = \sum_{i=1}^r p_i\lambda_i$ then its symplectic Galois group is isomorphic to\footnote{By convention, we take $S_1$ to be the trivial group.} $\prod_{p_i>0} S_{p_i}$. 
\end{prop}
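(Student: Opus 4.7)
The plan is to apply Namikawa's product formula $W = \prod_{B \in \mathcal B} W_B$ from Section \ref{sec:defthy} directly to the codimension two stratification identified in the preceding lemma. Everything factors through a routine root-system computation plus one quoted input.

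First, existence of a symplectic resolution is immediate from Theorem \ref{resolutions for slices} under the standing hypothesis: fixing any expression $\lambda = \lambda_{i_1}+\cdots+\lambda_{i_n}$ of $\lambda$ as a sum of fundamental coweights (which are automatically minuscule for $G^\vee$ under our assumptions), the convolution resolution $\Glr \to \overline{\Gr^\lambda_0}$ does the job.

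Next, I would specialize the preceding lemma to $\mu=0$: codimension two strata of $\Gr^{\bar\lambda}_0$ correspond to those simple coroots $\check{\alpha}_i$ for which $\nu := \lambda-\check{\alpha}_i$ is both dominant and $\geq 0$. Writing $\lambda = \sum_j p_j\lambda_j$ with $\lambda_j$ the fundamental coweights, dominance of $\nu$ amounts to $\langle \alpha_k,\lambda-\check{\alpha}_i\rangle \geq 0$ for all $k$; for $k\neq i$ this is automatic since off-diagonal Cartan entries are non-positive, while for $k=i$ it reduces to the single condition $p_i \geq 2$. The same computation forces the coefficient of $\check{\alpha}_i$ in the expansion of $\lambda$ in simple coroots to be at least one once $p_i \geq 2$, so the auxiliary condition $\nu \geq 0$ is automatic. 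For each such $i$ the preceding lemma identifies the transverse Kleinian slice as being of type $A_{p_i-1}$, with Weyl group $W'_B = S_{p_i}$.

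The remaining point is triviality of the monodromy automorphism $\sigma_B$ entering Namikawa's formula. Here I would quote the result from \cite{KWWY} that the strata $\Gr^\lambda_\nu$ are simply connected; the monodromy on the dual graph of the minimal resolution of the slice is therefore trivial, so $W_B = (W'_B)^{\sigma_B} = S_{p_i}$. Assembling these ingredients yields
\[
W \cong \prod_{p_i \geq 2} S_{p_i} = \prod_{p_i>0} S_{p_i},
\]
the second equality using the convention $S_1 = \{e\}$ and the fact that $p_i=0$ contributes no stratum. The main (and really only) non-combinatorial input is the simple-connectedness claim from \cite{KWWY}; everything else is the slice computation of Malkin--Ostrik--Vybornov and elementary bookkeeping with the Cartan matrix.
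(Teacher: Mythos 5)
Your argument follows the paper's proof almost exactly: apply Namikawa's product formula, use the preceding lemma to locate the codimension-two strata and identify the transverse Kleinian slice through the one indexed by $\check{\alpha}_i$ as type $A_{p_i-1}$ (with Weyl group $S_{p_i}$), invoke the simple-connectedness of the strata from \cite{KWWY} to kill the monodromy, and observe that the dominance check $\nu(\alpha_i)=p_i-2\geq 0$ forces $p_i\geq 2$. If anything your version is slightly more careful, spelling out the $\nu\geq 0$ verification that the paper glosses over and avoiding the paper's typo ($A_{p_i+1}$ for $A_{p_i-1}$ in the printed proof).
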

\begin{proof}
Theorem \ref{resolutions for slices} shows that $Y$ has a symplectic resolution, and we have seen that the slices through codimension two strata are Kleinian singularities of type $A_{p_i+1}$, and thus the associated reflection group is the symmetric group on $p_i+1$ letters. To be more precise, given $\lambda$ as in the statement of the Proposition, we obtain a codimension two stratum corresponding to $\nu = \lambda - \check{\alpha}_k$ provided $\nu$ is dominant. But if $C= (c_{ij})$ denotes the Cartan matrix of dual group $G^\vee$, then 
\[
\nu(\alpha_j) = p_j - c_{kj},
\]
which is at least $p_j$ if $j \neq k$, while if $j=k$ then $\nu(\alpha_k)=p_k-2$.  We must thus have $p_i>1$, however, if $p_i=1$ then by our convention the factor $S_{p_i}$ does not actually contribute to the product, hence the result follows.
\end{proof}

\begin{remark}
If we take $\mu\neq 0$ then the symplectic Galois group is in general smaller: one needs in addition, for each fundamental weight $\lambda_i$, that we have $\mu \leq \lambda-\alpha_i$.
\end{remark}

Next we recall the geometric Satake correspondence, discovered by Lusztig, whose full categorical meaning was elucidated in the beautiful paper \cite{MV}. This establishes an equivalence of Tannakian categories between the representations of the Langlands dual group $G^\vee$ and the category of $G\llsb t\rrsb$-equivariant perverse sheaves on $\Gr$. Under this equivalence, the convolution product of sheaves (recalled above for subsets of $\Gr$) corresponds to tensor product of representations. As such one sees that if $m\colon \Gr^{\bar{\vec{\lambda}}}_0\to \Gr^{\bar{\lambda}}_0$ is a symplectic resolution, then  the associated Springer sheaf $\mathsf{Spr}_{\lambda}$ corresponds to the tensor product $V(\lambda_1)\otimes\ldots\otimes V(\lambda_n)$ of representations of $G^\vee$. It follows that 

\[
\mathsf{Spr}_{\vec{\lambda}} \cong \bigoplus_{\nu} i^*(\text{IC}(\mathcal O_\nu))\otimes M_{\vec{\lambda},\nu},
\] 
where $M_{\vec{\lambda},\nu}$ is the multiplicity space, a vector space of dimension equal to the multiplicity of the irreducible $V_\nu$ in the tensor product representation $V_{\lambda_1}\otimes\ldots\otimes V_{\lambda_n}$ and $i$ is the inclusion of our slice into the affine Grassmannian. (Note that these multiplicities are computable, via a number of combinatorial tools, \textit{e.g.} crystal bases, Littelmann paths \textit{etc.}.)

\subsection{Comparing the Universal Deformation and the BD Deformation}
\label{universal BD}

The 
\linebreak 
Beilinson-Drinfeld deformation family arises by considering the affine Grassmannian spread over $\mathbb P^1$. We equip $\mathbb P^1$ with a weight $1$ $\mathbb G_m$-action, fixing points $\{0,\infty\} \subseteq \mathbb P^1$ and pick a local coordinate $t\in \Gamma(\mathbb P^1, \mathcal K_\mathbb P^1)$ at $0$ having a pole at $\infty \in \mathbb P^1$. The basic observation is the following: consider the set
\[
\begin{split}
\{(E,\phi): E & \text{ is a principal $G$-bundle on } \mathbb P^1 \text{ and } \phi\colon E_{|\mathbb P^1\backslash\{0\}} \to E^0_{|\mathbb P^1\backslash\{0\}} \\
& \text{ is an isomorphism}\},
\end{split}
\]
where $E^0$ denotes the trivial $G$-bundle. Using our local coordinate $t$ at $0$ one can readily obtain a bijection between this set and the affine Grassmannian $\Gr$.

This description naturally allows us to deform the affine Grassmannian over $\mathbb A^n$: we write $\Gr_{\mathbb A^n}$ for the family over $\mathbb A^n$ whose fibre at $(a_1,\ldots,a_n)$ is just
\[
\begin{split}
\Gr_{(a_1,\ldots,a_n)} = \{(E,\phi): &  E \text{ is a principal } G\text{-bundle on } \mathbb P^1 \\
&\text{ and }   \phi\colon E_{|\mathbb P^1\backslash\{a_1,\ldots,a_n\}} \to E^0_{|\mathbb P^1\backslash\{a_1,\ldots, a_n\}} \text{ is an isomorphism}\}   
\end{split}
\]
where as above $E^0$ is the trivial bundle. We write $\Gr_{\mu,\mathbb A^n}$ for the set of pairs $(E,\phi)$ such that $E$ has type $\mu$ at $\infty$. Note that this family in fact descends to one over $\text{Sym}^n(\mathbb A^1)$.

Define the convolution Grassmannian to be the $\mathbb A^n$-space $\tilde{\Gr}_{\mathbb A^n}$, where the fibre over $(a_1,\ldots,a_n)$ given by
\[
\begin{split}
\tilde{\Gr}_{(a_1,\ldots,a_n)} = &\{(E^1,E^2,\ldots,E^n,\phi_1,\ldots,\phi_n): E_i \text{ a $G$-bundle},\\ 
&\phi_i\colon E_{i|\mathbb P^1\backslash a_i} \to E_{i-1,\mathbb P^1\backslash \{a_i\}} \text{ an isomorphism}\}. 
\end{split}
\]

 There is a natural morphism from this space to $\Gr_{(a_1,\ldots,a_n)}$ given by sending $(E^1,\ldots,E^n,\phi_1,\ldots,\phi_n)$ to $(E^n, \phi_1\circ\ldots \circ \phi_n)$ (the composition being well-defined on $\mathbb A^1\backslash \{a_1,\ldots, a_n\}$).

Now fix a coweight $\lambda$ and consider the slice $\Gl_0$ through the orbit closure $\overline{\Gr^\lambda}$. Taking a decomposition $\vec{\lambda} = (\lambda_1,\ldots,\lambda_n)$ of $\lambda$ into a sum of minuscule weights, so that $\lambda = \lambda_1 + \ldots + \lambda_n$, the convolution space $Y=\Glr$ is a resolution of $\Gl_0$. Let $n_i$ be the multiplicity with which the fundamental weight $\varpi_i$ occurs in the sequence $(\lambda_1,\lambda_2,\ldots,\lambda_n)$ and let $\Sigma_{\vec{\lambda}} = \prod_{i=1}^\ell S_{n_i}$ (where $\ell$ is the rank of $G$). 

The space $\tilde{\Gr}_{\mathbb A^n}$ above yields a deformation of our symplectic resolutions $\Gr^{\vec{\lambda}}_{\mu}$ over $\mathbb A^n$, by imposing the condition that, at each $a_i$, that the morphism $\phi_i$ have Hecke-type $\lambda_i$. Imposing the condition that, for each $x \in (a_1,\ldots,a_n)$, the morphism $\phi$ has Hecke type at most $\sum_{i: a_i =x} \lambda_i$, our affine deformation no longer lives over $\text{Sym}^n\mathbb A^n$, rather it lives over $\mathbb A^n/\Sigma_{\vec{\lambda}}$. We thus obtain a diagram:

\xymatrix{
& & & & & \tilde{\Gr}_{\mathbb A^n}^{\vec{\lambda}}\ar[r] \ar[d] & \mathbb A^n \ar[d] \\
& & & & & \Gr_{\mathbb A^n}^{\lambda} \ar[r] & \mathbb A^n/\Sigma_{\vec{\lambda}}
}

It follows from the work of Namikawa recalled in $\S$\ref{sec:Conic universality} that there is a linear map $\alpha\colon \mathbb A^n \to H^2(Y,\mathbb C)$ such that the Beilinson-Drinfel'd deformation is obtained by pulling back the universal conic deformation via $\alpha$. Thus we have the following diagram:

\xymatrix{
& \tilde{\Gr}_{\mathbb A^n}^{\vec{\lambda}} \ar[rd] \ar[r]^a & \Gr_{\mathbb A^n}\times_{(f,q_{BD})} \mathbb A^n \ar[r]^{\tilde{f}} \ar[d]^{\tilde{q}_{BD}} & \mathbb A^n \ar[d]^{q_{BD}} \ar[r]^\alpha & H^2(\Gr^{\vec{\lambda}}_0) \ar[d]^{q} \\
& &  \Gr_{\mathbb A^n}^{\lambda}\ar[r]^f & \mathbb A^n/\Sigma_{\vec{\lambda}} \ar[r]^\beta & H^2(\Gr^{\vec{\lambda}}_0)/W,
}

\noindent
where $W$ is the symplectic Galois group, and the maps $\alpha$ and $\beta$ are the classifying maps given by (conic) universality. By considering the family $\Gr_{\mathbb A^n}\times_{(f,q_{BD})}\mathbb A^n$, and again using universality, we see that the right-hand square must commute.

Now let $\mathcal D$ be the discriminant locus of the universal conic deformation, so that $\mathcal D$ is a collection of hyperplanes in $H^2(Y,\mathbb C)$ invariant under the action of the symplectic Galois group $W$. The pull-back of this collection of hyperplanes under $\alpha$ is therefore the locus in $\mathbb A^n$ over which the deformations of $Y$ are not isomorphic to the corresponding Poisson deformations. But in the case of the BD Grassmannian, this is precisely the ``big $\vec{\lambda}$-diagonal''
\[
\Delta_{\vec{\lambda}} = \{(a_1,\ldots,a_n): a_i = a_j, \exists i \neq j \text{ but } \lambda_i = \lambda_j\}
\]

Note that in particular, we see that the image of $\alpha$ is not contained in $\mathcal D$.

Let $\mathbf B_{\vec{\lambda}} = \pi_1({\Delta_{\vec{\lambda}}}^c/\Sigma_{\vec{\lambda}})$, and let $\mathcal B_{\underline{\lambda}} = \pi_1(\mathcal D^{c}/W)$ be the fundamental group of complement of the image of $\mathcal D$ under the quotient map $q$ to $H^2(Y,\mathbb C)/W$. Now by Remark \ref{smallness of conic deformations}, the nearby cycles construction of the Springer sheaf $\Spr_{\vec{\lambda}}$ extends to this setting, and thus it follows that we obtain a monodromy action of $\mathcal B_{\vec{\lambda}}$ on $\Spr_{\vec{\lambda}}$.
 But by construction, the action is pulled-back from the action of $\mathbf B_{\underline{\lambda}}$ via the homomorphim $\alpha_*\colon \pi_1(\Delta_{\vec{\lambda}}^c/\Sigma_{\vec{\lambda}}) \to \pi_1(\mathcal D^c/W))$. Thus we have the following:

\medskip

\xymatrix{
& & & & \mathbf B_{\vec{\lambda}} \ar[r]^{\alpha_*} & \mathcal B_{\vec{\lambda}} 
\ar[d] \ar[dr] &
\\
& & & & & W \ar[r] & \text{Aut}(\Spr_{\vec{\lambda}})
}

To identify $\Sigma_{\vec{\lambda}}$ with the symplectic Galois group we will use the following Lemma, which was first observed in \cite[Proposition 2.8]{K}. We outline a proof for the reader's convenience.

\begin{lemma}
\label{Satake compatibility}
The monodromy action of $\Sigma_\lambda$ on $\Spr$ corresponds, under the geometric Satake correspondence, to the action by permutation of tensor factors.
\end{lemma}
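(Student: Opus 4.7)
The plan is to reduce the statement to the Mirković--Vilonen construction of the commutativity constraint in the geometric Satake category. Two preliminary observations make this reduction clean. First, since pullback along the slice inclusion $i\colon \Gr^{\bar\lambda}_0 \hookrightarrow \overline{\Gr^\lambda}$ commutes with everything in sight, it is enough to identify the $\Sigma_{\vec\lambda}$-action on $m_!\mathbb C_{\widetilde{\Gr}^{\vec\lambda}}[\dim]$ pushed forward to the full affine Grassmannian. Second, because each $\lambda_i$ is minuscule, $\overline{\Gr^{\lambda_i}} = \Gr^{\lambda_i}$ is smooth and the convolution $\widetilde{\Gr}^{\vec\lambda}$ is already smooth, so $m_!\mathbb C_{\widetilde{\Gr}^{\vec\lambda}}[\dim]$ coincides (up to a shift/twist) with the convolution $\mathrm{IC}_{\lambda_1} \star \cdots \star \mathrm{IC}_{\lambda_n}$; under geometric Satake this is $V(\lambda_1)\otimes\cdots\otimes V(\lambda_n)$, so the statement of the lemma makes sense.

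Next, I would factor the problem into pairwise swaps. The group $\Sigma_{\vec\lambda}=\prod_i S_{n_i}$ is generated by adjacent transpositions of pairs $(\lambda_j,\lambda_{j+1})$ with $\lambda_j=\lambda_{j+1}$; correspondingly, any loop in $(\mathbb A^n\smallsetminus\Delta_{\vec\lambda})/\Sigma_{\vec\lambda}$ projecting to such a transposition can be realized inside a small bidisc that moves only two of the points $a_j, a_{j+1}$ while keeping the others fixed. The factorization property of the Beilinson--Drinfeld Grassmannian then implies that, away from the big diagonal, $\widetilde{\Gr}^{\vec\lambda}_{\mathbb A^n}$ splits as an external product of the factors associated to the individual points, and the monodromy of such a transposition acts as the pair monodromy on the $j,j+1$ tensor slots and by identity elsewhere. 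Thus it suffices to treat the case $n=2$ with $\lambda_1 = \lambda_2 = \nu$.

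The two-point case is exactly the heart of the Mirković--Vilonen construction. Over $\mathbb A^2\smallsetminus \Delta$ the BD family is the external product $\Gr^{\nu} \boxtimes \Gr^{\nu}$, whose global sections in the fibre cohomology give $V(\nu)\otimes V(\nu)$; along the diagonal the nearby cycles compute the convolution $\mathrm{IC}_\nu\star \mathrm{IC}_\nu$, again corresponding under Satake to $V(\nu)\otimes V(\nu)$. Mirković--Vilonen prove (as the very definition of the symmetry in their Tannakian construction) that the resulting $\mathbb Z/2$-monodromy on this nearby cycles sheaf is precisely the swap of tensor factors on $V(\nu)\otimes V(\nu)$. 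The agreement between the BD-nearby-cycles monodromy and our $\Sigma_{\vec\lambda}$-monodromy on $\mathsf{Spr}_{\vec\lambda}$ is guaranteed by the construction in Section~\ref{universal BD}: the classifying map $\alpha$ was set up so that the BD family is pulled back from the universal conic deformation, and in Theorem~\ref{nearby cycles2} (together with Remark~\ref{smallness of conic deformations}) we identified the symplectic Galois monodromy with the nearby cycles monodromy.

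The main obstacle, in my view, is purely one of bookkeeping rather than substance: one must check that the conventions (shifts, orientations, signs of the commutativity constraint, and the identification of $\widetilde{\Gr}^{\vec\lambda}$ with the relevant open piece of the BD Grassmannian) used by \cite{MV} match those arising from our nearby cycles construction. Once these are aligned, the lemma is an immediate consequence of the MV construction, since the generators of $\Sigma_{\vec\lambda}$ act via the tensor-factor swaps that define the symmetric monoidal structure of the Satake category.
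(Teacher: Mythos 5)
Your proof is correct and takes essentially the same route as the paper's, which likewise reduces the lemma to the Mirkovi\'c--Vilonen fusion construction of the commutativity constraint (the paper cites \S 5 and the proof of Lemma 6.1 of \S 6 of \cite{MV}). You supply more of the surrounding scaffolding---the reduction from the slice to the full affine Grassmannian, the identification of the pushforward with the convolution product via minusculeness, and the factorization into pairwise transpositions via the Beilinson--Drinfel'd factorization structure---that the paper's brief proof leaves implicit.
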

\begin{proof}
This is essentially a consequence of the fusion construction of the commutativity constraint in \cite{MV}. In particular, it is shown in $\S 5$ of that paper that the convolution product can be interpreted via the fusion product, and in the proof of Lemma $6.1$ of $\S 6$ that the push-forward to $\mathbb A^1 \times \mathbb A^1$ of the fusion product sheaf has constant cohomologies. These facts imply the required compatibility.  
\end{proof}

The following Lemma is proved in type $A$ in \cite[\S 2]{K} (the proof in the general case is essentially the same).

\begin{lemma}
The monodromy action of $\mathbf B_{\vec{\lambda}}$ on $\Spr_{\vec{\lambda}}$ factors through the quotient $\Sigma_{\vec{\lambda}}$. 
\end{lemma}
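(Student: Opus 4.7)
The plan is to identify the kernel $P_{\vec{\lambda}} := \pi_1(\Delta_{\vec{\lambda}}^c)$ of the surjection $\mathbf{B}_{\vec{\lambda}} \twoheadrightarrow \Sigma_{\vec{\lambda}}$ --- arising from the free Galois covering $\Delta_{\vec{\lambda}}^c \to \Delta_{\vec{\lambda}}^c/\Sigma_{\vec{\lambda}}$ --- and to show directly that $P_{\vec{\lambda}}$ acts trivially on $\mathsf{Spr}_{\vec{\lambda}}$; the claimed factorisation through $\Sigma_{\vec{\lambda}} = \mathbf{B}_{\vec{\lambda}}/P_{\vec{\lambda}}$ is then immediate. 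The principal input is Theorem \ref{nearby cycles2}, which together with Remark \ref{smallness of conic deformations} (applicable because $\alpha(\mathbb{A}^n) \not\subseteq \mathcal{D}$) realises the action of $\mathbf{B}_{\vec{\lambda}}$ on $\mathsf{Spr}_{\vec{\lambda}}$ as the pullback along $\alpha_* \colon \mathbf{B}_{\vec{\lambda}} \to \mathcal{B}_{\vec{\lambda}}$ of an action that already factors through $\mathcal{B}_{\vec{\lambda}} \twoheadrightarrow W$. In particular the kernel $\pi_1(\mathcal{D}^c) \subseteq \mathcal{B}_{\vec{\lambda}}$ acts trivially, so it is enough to check that $\alpha_*(P_{\vec{\lambda}}) \subseteq \pi_1(\mathcal{D}^c)$.

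This last inclusion is purely topological. Since $\alpha^{-1}(\mathcal{D}) = \Delta_{\vec{\lambda}}$, the map $\alpha$ restricts to a continuous map $\Delta_{\vec{\lambda}}^c \to \mathcal{D}^c$; combined with the quotient map $\beta\colon \mathbb{A}^n/\Sigma_{\vec{\lambda}} \to H^2(Y,\mathbb{C})/W$ coming from the descent diagram in \S\ref{universal BD}, this gives a commutative square

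\xymatrix{
& \Delta_{\vec{\lambda}}^c \ar[r]^{\alpha} \ar[d] & \mathcal{D}^c \ar[d] \\
& \Delta_{\vec{\lambda}}^c/\Sigma_{\vec{\lambda}} \ar[r]^{\beta} & \mathcal{D}^c/W
}

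\noindent
in which both vertical arrows are free Galois coverings --- freeness on the left from $\Sigma_{\vec{\lambda}}$ permuting same-coloured distinct points, freeness on the right from Namikawa's description of $\mathcal{D}$ as the union of $W$-reflection hyperplanes. Applying $\pi_1$ yields a morphism of the associated short exact sequences. An element of $P_{\vec{\lambda}}$ is by definition represented by a genuine loop $\gamma$ in $\Delta_{\vec{\lambda}}^c$, so $\alpha\circ\gamma$ is a genuine loop in $\mathcal{D}^c$, and hence $\alpha_*(\gamma) \in \pi_1(\mathcal{D}^c)$.

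The main step to verify carefully will be the first point: that on the BD side the Springer sheaf $\mathsf{Spr}_{\vec{\lambda}}$ genuinely carries the monodromy action pulled back by $\alpha_*$ from the universal one. This is what Remark \ref{smallness of conic deformations} is set up to provide: smallness of $\tilde{\pi}_{\mathfrak{a}}$ on the BD family --- which holds precisely because $\alpha(\mathbb{A}^n)$ is not contained in the discriminant $\mathcal{D}$ --- ensures that nearby cycles commute with the relevant proper pushforwards, so that the nearby-cycles sheaf of the BD family is identified with the $\alpha$-pullback of the universal nearby-cycles sheaf and the two monodromy actions match. Once this compatibility is in place, the rest of the argument is the short topological computation given above.
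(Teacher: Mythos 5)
Your proof is correct, and it takes a genuinely different route from the paper's. The paper proves the lemma by passing to cohomology: via geometric Satake, $H^*(\Glr)$ is identified with the tensor product $V_{\lambda_1}\otimes\cdots\otimes V_{\lambda_n}$, and Lemma \ref{Satake compatibility} (the fusion-product argument of Mirkovic--Vilonen) shows the monodromy on this cohomology is the permutation of tensor factors, hence already factors through $\Sigma_{\vec{\lambda}}$; the factorisation on the whole sheaf $\Spr_{\vec{\lambda}}$ is then deduced by the observation that restriction to the cone-point stalk is injective on $\operatorname{Aut}(\Spr_{\vec{\lambda}})$. Your argument instead never leaves the topology of the two period domains: you exploit the morphism of short exact sequences of fundamental groups induced by the commuting square of free Galois covers $\Delta_{\vec{\lambda}}^c \to \Delta_{\vec{\lambda}}^c/\Sigma_{\vec{\lambda}}$ and $\mathcal D^c \to \mathcal D^c/W$, so that $\alpha_*$ carries $\ker(\mathbf B_{\vec{\lambda}}\twoheadrightarrow\Sigma_{\vec{\lambda}})$ into $\ker(\mathcal B_{\vec{\lambda}}\twoheadrightarrow W)$, and then invoke Theorem \ref{nearby cycles2} to see the latter kernel acts trivially. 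This is more elementary (no Satake, no fusion), it sidesteps the mild circularity in invoking Lemma \ref{Satake compatibility} whose statement already speaks of a $\Sigma_{\vec{\lambda}}$-action, and it isolates the factorisation as a purely topological fact independent of the coefficient field. What it does not provide, and what the paper's route does, is the explicit identification of the resulting $\Sigma_{\vec{\lambda}}$-action on $H^*(\Glr)$ with the permutation of tensor factors --- which the paper needs a few lines later to conclude that $\gamma\colon \Sigma_{\vec{\lambda}}\to W$ is injective. One small point to be explicit about: freeness of the $W$-action on $\mathcal D^c$ is used but not quite stated in \cite{N3}; it is however exactly the assertion in the proof of Theorem \ref{Springer properties} that $\widetilde{\pi}^{-1}(\mathcal X^{\operatorname{reg}})\to\mathcal X^{\operatorname{reg}}$ is an \emph{unramified} Galois $W$-cover, so you may as well cite that.
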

\begin{proof}
Using the geometric Satake equivalence, the cohomology $H^*(\Glr)$ can be identified with the tensor product $V_{\lambda_1}\otimes\ldots \otimes V_{\lambda_n}$, where the $V_{\lambda_i}$ are the fundamental representations of $G^\vee$ the Langlands dual of $G$, and moreover by Lemma \ref{Satake compatibility} the monodromy action of $\mathbf B_{\vec{\lambda}}$ on this cohomology factors through $\Sigma_{\vec{\lambda}}$ and is identified with the natural permutation action of $\Sigma_{\vec{\lambda}}$ on the tensor product. Since this action is faithful and is induced by the monodromy action on $\Spr_{\vec{\lambda}}$ the result follows.
\end{proof}

Thus our map $\alpha_*$ induces a homomorphism $\gamma \colon \Sigma_{\vec{\lambda}} \to W$, which gives a faithful action of $\Sigma_{\vec{\lambda}}$ on the cohomology $H^*(\Glr)$. In particular, $\gamma$ must be injective. But then since $W$ and $\Sigma_{\vec{\lambda}}$ have the same order (by Proposition \ref{W for slices}) $\gamma$ is an isomorphism.

It follows that we can reduce the calculation the action of the symplectic Galois group on the Springer sheaf $\mathcal S_{\vec{\lambda}}$ to the question of calculating the action of $\prod_{i \in I} S_{n_i}$ on the multiplicity spaces $M_{\vec{\lambda},\nu}$ (in the notation of the preceeding subsection). We use this in the next section to show that the multiplicity spaces in the Springer sheaf need not always be irreducible.

\section{Tensor products of representations}

The irreducible representations of $\mathfrak{sl}_{n+1}$, as for any semisimple Lie algebra, are indexed by highest weights. In the case of $\mathfrak{sl}_{n+1}$ however, these can naturally be identified with partitions $\lambda = (\lambda_1,\lambda_2,\ldots,\lambda_k)$ where $k\leq n$. Indeed the root lattice of $\mathfrak{sl}_{n+1}$ is $Q= \{\alpha \in \mathbb Z^{n+1}: \sum_{i=1}^{n+1} \alpha_i=0\}$, and hence the weight lattice $P$ is naturally identified with $\mathbb Z^{n+1}/\mathbb Z.\mathbf 1$ where $\mathbf 1 = (1,1,\ldots,1) \in \mathbb Z^{n+1}$. The set of dominant weights $P^+$ consists of the elements $\lambda \in P$ whose representatives $\tilde{\lambda} = (\tilde{\lambda}_1,\ldots,\tilde{\lambda}_{n+1}) \in \mathbb Z^{n+1}$ satisfy $\tilde{\lambda}_i \geq \tilde{\lambda}_{i+1}$ for $1\leq i \leq n$. Clearly given $\lambda \in P$ it has a unique representative $\tilde{\lambda}$ with $\tilde{\lambda}_{n+1}=0$. Thus any dominant weight $\lambda$ of $\mathfrak{sl}_{n+1}$ has a partition with at most $n$ parts associated to it, and we will write this partition as $(\lambda_1,\ldots,\lambda_n)$. 

Write  $V(\lambda)$ for the irreducible highest weight representation with highest weight $\lambda$. Given $\lambda, \mu,\nu \in P^+$, the Littlewood-Richardson coefficient $c_{\mu,\nu}^{\lambda}$ is the multiplicity of $V(\lambda)$ in $V(\mu)\otimes V(\nu)$ and the Littlewood-Richardson rule gives a combinatorial method for determining the numbers $c^{\lambda}_{\mu,\nu}$.

In more detail, the coefficient $c_{\mu,\nu}^\lambda$ is the number of skew tableaux of shape $\lambda/\mu$ and weight $\nu$, where a weighting is a filling of the skew tableau with $\nu_1$ 1s, $\nu_2$ 2s, \textit{etc.} such that the rows of the skew tableau are weakly increasing, and the columns are strictly increasing (that is, a \textit{semistandard} skew-tableau), and such that the word obtained by concatenating the reverse words of each row is a lattice word, that is the multiplicity of $i$ is at least that of $i+1$ on any initial segment of the word.

The fundamental representations of $\mathfrak{sl}_{n+1}$ are the exterior powers of $V$, the vector representation. They are the representations associated to the partitions $(1^k)$ for $k=1,\ldots n$. They are minuscule representations (that is, their weights form a single Weyl-group orbit).

\begin{lemma}
If $\nu = (1^k)$ then $c_{\mu,\nu}^\lambda \in \{0,1\}$.
\end{lemma}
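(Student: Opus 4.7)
The plan is to apply directly the Littlewood-Richardson rule as recalled in the text. Since the content $\nu = (1^k)$ is multiplicity-free, each entry $1, 2, \ldots, k$ appears exactly once in any candidate filling of the skew shape $\lambda/\mu$ (which must have exactly $k$ boxes). Because the entries are pairwise distinct, the ``weakly increasing rows'' requirement becomes strictly increasing rows, matching the strictly increasing column condition.

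The first step is to analyze the lattice word condition in this multiplicity-free setting: for each $i$, at every initial segment of the reverse reading word the count of $i$ must be at least the count of $i+1$, and since each count is either $0$ or $1$, this forces $i$ to appear before $i+1$ in the reverse reading word for every $i$. Hence the reverse reading word must be exactly $1, 2, 3, \ldots, k$.

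Next I would argue that this ordering constraint, combined with the row-strictly-increasing property, forces $\lambda/\mu$ to be a \emph{vertical strip}, meaning it contains at most one box per row. Indeed, suppose some row of $\lambda/\mu$ contains two or more boxes with entries $a_1 < a_2 < \cdots < a_r$ from left to right; then these entries appear in the reverse reading word as the decreasing sequence $a_r, a_{r-1}, \ldots, a_1$. Since the total reverse reading word must be the strictly increasing sequence $1,2,\ldots,k$, no such decreasing consecutive block of length $\geq 2$ is possible, so $r \leq 1$.

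Finally I would exhibit (and check uniqueness of) a valid filling when $\lambda/\mu$ is a vertical strip of size $k$: place $1, 2, \ldots, k$ in the unique boxes of its rows, reading from top to bottom. Row-increase holds trivially; column-strict-increase holds because entries in any column increase with row index; and the reverse reading word is $1,2,\ldots,k$, a lattice word. Conversely, if $\lambda/\mu$ is not a vertical strip then by the preceding paragraph no LR filling exists. Thus $c^\lambda_{\mu,\nu} \in \{0,1\}$ (recovering Pieri's rule for exterior powers). There is no serious obstacle here; the only delicate point is the careful translation of the lattice-word condition in the multiplicity-free setting, which is what pins down the shape to a vertical strip.
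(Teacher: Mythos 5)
Your argument is correct and follows essentially the same route as the paper: apply the Littlewood--Richardson rule, observe that any valid LR filling forces $\lambda/\mu$ to be a vertical strip (at most one box per row), and then note the filling is unique when it exists. The paper's version is terser and simply asserts the one-box-per-row constraint, whereas you spell out how the lattice-word condition on the reverse reading word pins this down; the extra detail is a reasonable clarification rather than a different method.
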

\begin{proof}
The skew-shape $\lambda/\mu$, if it is defined, must have only one box per row, and hence it is clear that to obtain a Littlewood-Richardson tableau one must fill the boxes with the numbers $1,\ldots,k$ in order working down the columns from right to left. Thus $c_{\mu,\nu}^\lambda \in \{0,1\}$ as required.
\end{proof}

Using the previous lemma, we obtain the following: 

\begin{lemma}
Let $\mathfrak g = \mathfrak{sl}_{n+1}$, where we require $n \geq 9$ and let $\lambda = \varpi_3$ be the third fundamental weight, so that $\lambda$ has partition $(1^3)$. Then if $\mu = (2^3,1^3)$ then the multiplicity space $\text{Hom}_{\mathfrak g}(V(\mu),V(\lambda)^{\otimes 3})$ is $4$-dimensional. 
\end{lemma}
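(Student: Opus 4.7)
My approach is to iterate the classical Pieri rule, which is essentially the content of the preceding lemma: since $\lambda = \varpi_3$ corresponds to the single-column partition $(1^3)$, for any partition $\eta$ one has
\[
V(\eta) \otimes V(\lambda) \;=\; \bigoplus_{\mu/\eta \text{ a vertical } 3\text{-strip}} V(\mu),
\]
with each summand appearing exactly once. Applying this twice, the desired multiplicity equals the number of intermediate partitions $\nu$ with $(1^3) \subseteq \nu \subseteq \mu$ such that both $\nu/(1^3)$ and $\mu/\nu$ are vertical $3$-strips. The whole proof thus reduces to enumerating such $\nu$.

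The key step is this enumeration. The containment $(1^3) \subseteq \nu \subseteq \mu = (2^3, 1^3)$ forces $\nu_i \in \{1,2\}$ for $i=1,2,3$ and $\nu_i \in \{0,1\}$ for $i \geq 4$, while $|\nu| = |\mu| - 3 = 6$. Letting $a$ denote the number of rows among $i \leq 3$ with $\nu_i = 2$ and $b$ the number of rows among $i \geq 4$ with $\nu_i = 1$, the partition condition forces these to be the initial such indices and $|\nu|=6$ becomes $a + b = 3$. The four solutions $(a,b) \in \{(3,0),(2,1),(1,2),(0,3)\}$ correspond to
\[
\nu \in \bigl\{\,(2^3),\ (2^2,1^2),\ (2,1^4),\ (1^6)\,\bigr\}.
\]
In each case, both $\nu/(1^3)$ and $\mu/\nu$ contain at most one box per row (the row-length constraints preclude adding two boxes in a single row), so both skew shapes are vertical $3$-strips, each contributing $1$ to the count.

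The argument is essentially pure bookkeeping, so there is no real obstacle beyond being careful with the partition inequalities in the enumeration; the hypothesis $n \geq 9$ (of which $n \geq 5$ is really all that's needed) merely guarantees that each of the four intermediate $\nu$, the tallest of which has $6$ parts, is an honest dominant weight of $\mathfrak{sl}_{n+1}$, so that the four irreducibles $V(\nu)$ are genuinely distinct. Summing the four contributions of $1$ yields the claimed multiplicity $4$.
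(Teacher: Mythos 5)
Your proof is correct and takes essentially the same approach as the paper's: both reduce the claim to the Pieri rule for tensoring with $V((1^3)) = \Lambda^3 V$ and then count chains $(1^3) \subseteq \nu \subseteq (2^3,1^3)$ of vertical $3$-strips, arriving at the same four intermediate shapes $(2^3), (2^2,1^2), (2,1^4), (1^6)$. If anything, your write-up is slightly more careful than the published proof, which phrases the count only as ``remove a vertical $3$-strip from $\mu$'' and leaves implicit the verification that $\nu/(1^3)$ is also a vertical strip in each case; you check both halves of the chain condition explicitly.

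A small quibble on your parenthetical: the cleanest threshold for the Lemma as literally stated is $n \geq 6$, so that the six-row partitions $(1^6)$ and $(2^3,1^3)$ are already in reduced form with $\mu_{n+1}=0$. At $n=5$ one has $(2^3,1^3) \equiv (1^3) \pmod{\mathbf 1}$ as $\mathfrak{sl}_6$-weights, so $V(\mu) \cong V(\varpi_3)$; the multiplicity still works out to $4$ by the same count, but it is no longer quite the statement ``$V(\mu)$ with $\mu = (2^3,1^3)$'' as written. The stronger bound $n\geq 9$ in the paper is imposed for the sake of the Corollary that follows, not for the Lemma itself.
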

\begin{proof}
Using the Littlewood-Richardson Rule and the previous lemma one sees that the dimension is the number of ways of removing three boxes from the Young diagram of shape $(2^3,1^3)$ in such a way that no two boxes belong to the same row, and the resulting shape is still a Young diagram. It is easy to check that this can be done in precisely $4$ ways -- one may take $k$ boxes from the first column and $3-k$ from the second for any $k$ between $0$ and $3$.
\end{proof}

Recall that we say a local system $\mathcal E$ on a symplectic leaf is a Springer local system if $IC(\mathcal E)$ occurs as a constituent of the Springer sheaf.

\begin{cor}
Let $\lambda = 3\varpi_3$ and $\mu=0$, and let $\vec{\lambda} = (\varpi_3,\varpi_3,\varpi_3)$. We may decompose the Springer sheaf $\mathcal S$ for $\pi\colon \Glmr \to \Glm$ as
\[
\mathcal S \cong \bigoplus_{\nu, \nu\leq \lambda} IC(\Gr^{\mu}_0)\otimes \mathcal M_\mu,
\]
where each $\mathcal M_\mu$ carries a natural action of $W= S_3$ the symplectic Galois group. When $\mu = (2^3,1^3)$ this representation is reducible. Thus the symplectic Springer correspondence does not necessarily yield a function from Springer local systems to irreducible representations of the symplectic Galois group.
\end{cor}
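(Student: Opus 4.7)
The plan is to combine three previously established ingredients: the general structure of the symplectic Springer sheaf, the identification of the symplectic Galois group action with permutation of tensor factors via geometric Satake, and the Littlewood--Richardson computation of the multiplicity space. The decomposition itself follows from Theorem~\ref{Springer properties}, which tells us that $\mathsf{Spr}_{\vec\lambda}$ is a semisimple perverse sheaf on $\Glm$, combined with the fact (from Corollary~\ref{Quiver holonomic}--style reasoning applied to slices, or directly from the geometric Satake set-up) that the simple summands are of the form $IC(\Gr_0^\nu, \mathcal L)$ for $\nu \leq \lambda$ with some local system $\mathcal L$. Since the strata are simply connected, the local systems are trivial, yielding the stated decomposition with multiplicity spaces $\mathcal M_\nu$.

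Next, I would invoke Proposition~\ref{W for slices} to identify the symplectic Galois group: with $\lambda = 3\varpi_3$ one has $p_3 = 3$ and all other $p_i = 0$, so $W = S_3$. The discussion of Section~\ref{universal BD}, culminating in Lemma~\ref{Satake compatibility} and the subsequent identification $\gamma\colon \Sigma_{\vec\lambda} \xrightarrow{\sim} W$, tells us that the monodromy action of $W$ on the Springer sheaf coincides, after applying the geometric Satake equivalence, with the permutation action of $S_3 = \Sigma_{\vec\lambda}$ on the three tensor factors of $V(\varpi_3)^{\otimes 3}$. Consequently, as $S_3$-representations,
\[
\mathcal M_\mu \;\cong\; \mathrm{Hom}_{\mathfrak g}\bigl(V(\mu),\,V(\varpi_3)^{\otimes 3}\bigr),
\]
with $S_3$ acting by permuting the tensor factors of $V(\varpi_3)^{\otimes 3}$.

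To conclude, I would specialize to $\mu = (2^3, 1^3)$ and apply the Littlewood--Richardson calculation from the preceding lemma, which gives $\dim \mathcal M_\mu = 4$. Since the irreducible representations of $S_3$ have dimensions $1$, $1$, and $2$, no irreducible $S_3$-representation has dimension $4$; hence $\mathcal M_\mu$ must be reducible. The main (only) genuine obstacle is checking that the $W$-action arising from the intermediate extension really agrees with the permutation action on tensor factors coming from the BD degeneration, but this is exactly the content of the identification $\gamma$ in Section~\ref{universal BD}, which we may quote. All other steps are either direct invocations of the structure theorems above or elementary representation-theoretic bookkeeping.
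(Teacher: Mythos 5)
Your proposal is correct and follows essentially the same route as the paper's own proof: invoke geometric Satake to identify the multiplicity space $\mathcal M_\mu$ with the tensor-product multiplicity $\operatorname{Hom}_{\mathfrak g}(V(\mu), V(\varpi_3)^{\otimes 3})$, use the preceding Littlewood--Richardson lemma to compute that it is $4$-dimensional when $\mu = (2^3,1^3)$, and conclude reducibility since $S_3$ has no $4$-dimensional irreducible. You are slightly more explicit than the paper in spelling out that the $W$-action on $\mathcal M_\mu$ really is the tensor-factor permutation (via the identification $\gamma\colon \Sigma_{\vec\lambda}\xrightarrow{\sim} W$ and Lemma~\ref{Satake compatibility}), which the paper's proof leaves implicit, but this is a presentational difference rather than a different argument.
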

\begin{proof}
The Geometric Satake Correspondence shows that global sections functor can be enhanced to yield an equivalence of categories from the category of perverse sheaves on $\Gr$ which are constructible with respect to the $G\llsb t\rrsb$-orbit stratification to the category of representations for the Langlands dual group $G^\vee$ where convolution corresponds to tensor product of representations. It follows that the multiplicity spaces $\mathcal M_\mu$ in the decomposition of $\mathcal S$ for our choice of variety $\Glmr$ are simply the tensor product multiplicities for the irreducible $V(\mu)$ in $V(\lambda)^{\otimes 3}$. When $\mu=(2^3,1^3)$ we have seen that this multiplicity is $4$, hence $\mathcal M_\mu$ is $4$-dimensional, and hence cannot be an irreducible representation of $S_3$. 
\end{proof}


\begin{thebibliography}{KWWY}

\bibitem[Be]{Be} A. Beauville, \textit{Symplectic singularities}, Invent. Math. \textbf{139} (2000), 541--549.

\bibitem[BS]{BS} G. Bellamy and T. Schedler, Symplectic resolutions of quiver varieties and character varieties, {\tt arXiv:1602.00164}.

\bibitem[BLPW]{BLPW} T. Braden, A. Licata, N. Proudfoot, B. Webster, \textit{Quantization of conical symplectic resolutions II: category $\mathcal O$ and symplectic duality}, Ast\'erisque no. 384 (2016), 75--179.

\bibitem[BPW]{BPW} T. Braden, N. Proudfoot, B. Webster, \textit{Quantization of conical symplectic resolutions I: local and global structure}, Ast\'erisque no. 384 (2016), 1--73.

\bibitem[CG]{CG} N. Chriss, V. Ginzburg, \textit{Representation theory and complex geometry}, Modern Birkhauser Classics. ISBN 978-0-8176-3792-7. 


\bibitem[C01]{C01} W. Crawley-Boevey, \textit{Geometry of the moment map for representations of quivers}, Comp. Math. \textbf{126} (2001), 257--293.

\bibitem[C02]{C02} W. Crawley-Boevey, \textit{Decomposition of Marsden-Weinstein reductions of quiver varieties}, Comp. Math. \textbf{130} (2002), 225--239.

\bibitem[C03]{C03} W. Crawley-Boevey, \textit{Normality of Marsden-Weinstein reductions for representations of quivers}, Math. Ann. \textbf{325} (2003), 55-79.


\bibitem[EK]{EK} P. Etingof, D. Kazhdan, \textit{Quantization of Lie bialgebras, III}, Selecta Math. New ser. 4 (1998) 233--269.


\bibitem[F]{Fu} B. Fu, \textit{Symplectic resolutions for nilpotent orbit closures}, Invent. Math. \textbf{151} (2003), 167--186.

\bibitem[Ga]{Ga} M. Garay, \textit{Vanishing cycles in complex symplectic geometry}, Mosc. Math. J. \textbf{8},  (2008), no. 1, 73--90.

\bibitem[GK]{GK} V. Ginzburg, D. Kaledin, \textit{Poisson deformations of symplectic quotient singularities}, Adv. Math. \textbf{186} (2004), 1--57. 

\bibitem[Gr]{Gr} M. Grinberg, \textit{A generalization of Springer theory using nearby cycles}, Representation Theory, (1998), 410-- 431.

\bibitem[GS]{GS} L. Gottsche, W. Soergel, \textit{Perverse sheaves and the cohomology of the Hilbert schemes of smooth algebraic surfaces}, Math. Ann. \textbf{296} (1993), 235--245.

\bibitem[HW]{HW} M. Harada, G. Wilkin \textit{Morse theory for the moment map for representations of quivers}, Geom. Dedicata \textbf{150} (2011), 307--353.

\bibitem[Ho]{Ho} R. Hotta, \textit{On Springer's representations}, J. Fac. Sci. Univ. Tokyo Sect. IA Math. \textbf{28} (1981) no. 3, 863--876.


\bibitem[H]{H} J. E. Humphreys, \textit{An introduction to Lie algebras and Representation Theory}, Springer-Verlag.  

\bibitem[H]{H} J. E. Humphreys, \textit{Introduction to Lie algebras and representation theory}, Second printing, revised. Graduate Texts in Mathematics, 9. Springer-Verlag, New York-Berlin, 1978. xii+171 pp.

\bibitem[Ka1]{Ka} D. Kaledin, \textit{Symplectic singularities from the Poisson point of view}, J. Reine Angew. Math. \textbf{600} (2006), 135--156.

\bibitem[Ka2]{Ka2} D. Kaledin, \textit{Derived equivalences by quantization}, Geom. Funct. Anal. \textbf{17} (2008), no. 6, 1968--2004.

\bibitem[KV]{KV} D. Kaledin, M. Verbitsky, \textit{Period map for non-compact holomorphically symplectic manifolds}, Geom. Funct. Anal. 12 (2002), no. 6, 1265–1295.

\bibitem[K]{K} J. Kamnitzer, \textit{The Beilinson-Drinfeld Grassmannian and symplectic knot homology}. Grassmannians, moduli spaces and vector bundles, Clay Math. Proc., 14, Amer. Math. Soc., Providence, RI, 2011, pp. 81--94.  


\bibitem[KWWY]{KWWY} J. Kamnitzer, B. Webster, A. Weekes, O. Yacobi, \textit{Yangians and slices in the affine Grassmannian}, Algebra and Number Theory, \textbf{8}, no. 4 (2014), 857--893.

\bibitem[KS]{KS} M. Kashiwara, P. Schapira, \textit{Sheaves on Manifolds}, Springer, 1990.


\bibitem[Le]{Le} Le, Dung Trang, \textit{Some remarks on relative monodromy}, Real and Complex singularities, (Proc. Ninth Nordic Summer School/NAVF Sympos. Math. Oslo, 1976) Sijthoff and Nordhoff, 1977, 397--403.

\bibitem[L12]{L12} I. Losev, \textit{Isomorphisms of quantizations via quantizations of resolutions}, Adv. Math. \textbf{231} (2012), 1216--1270. 

\bibitem[L17]{Lo} I. Losev, \textit{On categories $\mathcal O$ for quantized symplectic resolutions}, Compositio Math. {\bf 153} (2017), no. 12, 2445--2481.


\bibitem[Lu]{Lu} G. Lusztig, \textit{Quiver varieties and Weyl group actions}, Ann. Inst. Fourier \textbf{50} (2000), 461--489. 

\bibitem[Mf]{Maffei} A. Maffei, \textit{A remark on Weyl groups and quiver varieties}, Ann. Sc. Norm. Super Pisa Cl. (5) 1 (2002), no. 3, 649--686. 

\bibitem[MOV]{MOV} A. Malkin, V. Ostrik, M. Vybornov, \textit{The minimal degeneration singularities in the affine Grassmannians}, Duke Math. J. \textbf{126} no. 2, (2005), 233--249.

\bibitem[M]{M} E. Markman, \textit{Modular Galois covers associated to symplectic resolutions of singularities}, J. reine angew. Math. \textbf{644} (2010), 189--220.

\bibitem[Mt]{Martino} M. Martino, \textit{Stratifications of Marsden-Weinstein reductions for representations of quivers and deformations of symplectic quotient singularities}, Math. Z. {\bf 258} (2008), 1--28. 

\bibitem[Ma]{Mather} J. Mather, \textit{Notes on topological stability}, Bulletin of the AMS, \textbf{49} (2012) no. 4, 475--506.


\bibitem[MN]{McGNKirwan} K. McGerty and T. Nevins, Kirwan surjectivity for quiver varieties, {\em Invent. Math.} {\bf 212} (2018), no. 1, 161--187.

\bibitem[MV]{MV} I. Mirkovic, K. Vilonen, \textit{Geometric Langlands duality and representations of algebraic groups over commutative rings}, Ann. of Math. (2) \textbf{166}Na (2007), no. 1, 95--143.

\bibitem[Nag]{Nag} T. Nagaoaka, \textit{The universal Poisson deformation of Hypertoric varieties and some classification results}, {\tt arXiv:1810.02961}.

\bibitem[N94]{N94} H. Nakajima, \textit{Instantons on ALE spaces, quiver varieties, and Kac-Moody algebras}, Duke Math. J. \textbf{76} (1994) no. 2, 365--416.

\bibitem[N98]{N98} H. Nakajima, \textit{Quiver varieties and Kac-Moody algebras} Duke Math. J. \textbf{91} (1998), no. 3, 515--560.

\bibitem[N00]{N00} H. Nakajima, \textit{Quiver varieties and finite dimensional representations of quantum affine algebras}, J. Amer. Math. Soc. 14 (2000) no. 1 , 145--38.

\bibitem[N03]{N03} H. Nakajima, \textit{Reflections functors for quiver varieties and Weyl group actions}, Math. Ann. 327 (2003), no. 4, 671--721.

\bibitem[Na1]{Naflips} Y. Namikawa, \textit{Flops and Poisson deformations of symplectic varieties}, Publ. Res. Inst. Math. Sci. {\bf 44} (2008), no. 2, 259--314.

\bibitem[Na2]{Na1} Y. Namikawa, \textit{Poisson deformations of affine symplectic varieties}, Duke Math. J. \textbf{156}, no.1 (2011), 51-- 85.

\bibitem[Na3]{Na2} Y. Namikawa, \textit{Poisson deformations of affine symplectic varieties, II}, Kyoto J. of Math. \textbf{50}, no. 4 (2010), 727--752.

\bibitem[Na4]{N3} Y. Namikawa, \textit{Poisson deformations and birational geometry}, J. Math. Sci. Univ. Tokyo {\bf 22} (2015), no. 1, 339--359.

\bibitem[Ngo]{Ngo} B. C. Ng\^o, \textit{Faisceaux pervers, homomorphisme de changement de base et lemme fondamental de Jacquet et Ye.} (French. English, French summary),
Ann. Sci. École Norm. Sup. (4) \textbf{32} (1999), no. 5, 619--679. 

\bibitem[Sa]{Sa} C. Sabbah, \textit{Morphismes analytiques stratifi\'es san \'eclatement et cycles \'evanescents}, Analysis and topology on singular spaces, II, III (Luminy 1981), 286--319, Ast\'erisque, 101-102, Soc. Math. France, Paris 1983.

\bibitem[Sp]{Sp} T. A. Springer, \textit{Trigonometric sums, Green functions of finite groups and representations of Weyl groups}, Invent. Math., no. 36 (1976), 173--207. 

\bibitem[Tr]{Tr} D. Treumann, \textit{A topological approach to induction theorems in Springer theory}, Representation Theory, no. 13, (2009), 8 -- 18.

\bibitem[We]{W} B. Webster, \textit{Centres of KLR algebras and cohomology rings of quiver varieties}, preprint arXiv:1504:04401.




\end{thebibliography}
\end{document}